\documentclass[12pt]{amsart}
\usepackage{amsfonts}
\usepackage{mathrsfs}
\usepackage{amscd,amsmath,amssymb,amsfonts}
\usepackage{color}
\usepackage{srcltx}
\usepackage[all]{xy}
\usepackage{tikz-cd}

\theoremstyle{plain}
\newtheorem{thm}{Theorem}
\newtheorem{lem}[thm]{Lemma}
\newtheorem{cor}[thm]{Corollary}
\newtheorem{prop}[thm]{Proposition}

\theoremstyle{definition}
\newtheorem{defn}[thm]{Definition}

\newtheorem{rmk}[thm]{Remark}

\newtheorem{ex}[thm]{Example}

\numberwithin{thm}{section} \numberwithin{equation}{section}

\newcommand{\sD}{{\mathcal D}}
\newcommand{\sE}{{\mathcal E}}

\newcommand{\sL}{{\mathcal L}}

\newcommand{\sO}{{\mathcal O}}
\newcommand{\sP}{{\mathcal P}}
\newcommand{\sQ}{{\mathcal Q}}

\begin{document}

\title{Moduli spaces and the algebra of conformal blocks}

\author{Yanglong Zhang and Mingshuo Zhou}

\address{Center of Applied Mathematics, School of Mathematics, Tianjin University, No.92 Weijin Road, Tianjin 300072, P. R. China}
\email{zhangyanglong15@mails.ucas.edu.cn}

\address{Center of Applied Mathematics, School of Mathematics, Tianjin University, No.92 Weijin Road, Tianjin 300072, P. R. China}
\email{zhoumingshuo@amss.ac.cn}

\thanks{This work was supported by National Natural Science Foundation of China (Grant No. 12171352).}
\begin{abstract} For a classical simple and simply {connected group} $G$, let $\mathcal{M}_{G,\omega}$ be the moduli space of $\omega$-semistable parabolic $G$-bundles on a complex smooth projective curve of genus $g$. We prove two results in this article: (1) $\mathcal{M}_{G,\omega}$ is of Fano type when $g\geq 3$; (2) the algebra of conformal blocks on any $n$-pointed stable curve for a classical simple Lie algebra is finitely generated.
\end{abstract}
\keywords{Moduli spaces, Parabolic $G$-bundles, Fano type, Algebra of conformal blocks}
\subjclass{14D20, 14D23, 14E05, 14L35 (primary), 14B05, 14D06, 14D22, 17B20
(secondary).}
\maketitle

\section{Introduction}
Let $X$ be a normal projective variety over $\mathbb{C}$. We say that $X$ is of Fano type (or log Fano), if there exists an effective $\mathbb{Q}$-divisor $\Delta$ such that $(X,\Delta)$ is klt and $-(K_X+\Delta)$ is nef and big (see \cite{PS}). Schwede and Smith proved that a log Fano projective  variety in characteristic
zero is of globally F-regular type (see \cite{Sc}). Thus, it enjoys remarkable geometric and cohomological properties.

Let $G$ be an affine algebraic group, and ${C}$ be a smooth projective curve over $\mathbb{C}$ with a finite subset $I\subset C$. Fix a parabolic subgroup $P_x\subset G$ for each $x\in I$. By a {\em parabolic $G$-bundle}, we mean a principal $G$-bundle $E$ on $C$ with sections $\{s_x:\{x\}\rightarrow E|_{\{x\}}/P_x\}_{x\in I}$ and the given weight $a=\{a_x\}_{x\in I}$, where $a_x\in X^{\ast}(P_x)^{\vee}_{\mathbb{Q},+}$. Let $\omega=(\{P_x\},a)$ denote the parabolic data.
If $\mathrm{Lie}(a_x)\in \Phi_0^s$ (see (\ref{equation fundamental alcove})) for $x\in I$, there exists a moduli space $\mathcal{M}_{G,\omega}$ of $\omega$-semistable {\em parabolic $G$-bundles} on $C$. Our first result is

\goodbreak

\begin{thm}[=Theorem \ref{thm Fano type of moduli space}] When $G$ is a classical simple and $\text{simply}$ connected group, let $\mathcal{M}_{G,\omega}$ be the moduli space of $\omega$-semistable {\em parabolic $G$-bundles} on a complex smooth projective curve of genus $g$. Then
$\mathcal{M}_{G,\omega}$ is of Fano type when $g\geq 3$.\end{thm}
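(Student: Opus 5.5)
The plan is to use the standard criterion of Fano type for GIT quotients of varieties that are themselves of Fano type. We also use the description of $\mathcal{M}_{G,\omega}$ as a GIT quotient of a product of a moduli space of (non-parabolic) $G$-bundles with decorations and flag varieties. Concretely, we realize
\[
\mathcal{M}_{G,\omega}\cong \Big(\mathcal{R}\times\prod_{x\in I}G/P_x\Big)/\!\!/_{\mathcal{L}_\omega}\mathcal{G},
\]
where $\mathcal{R}$ is a suitable smooth parameter scheme (a Quot-type scheme of $G$-bundles with level structure, or equivalently the affine Grassmannian picture $\mathcal{G}r\times\prod G/P_x$ modulo the group $G(\mathcal{O}_{C\setminus\{p\}})$). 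The linearization $\mathcal{L}_\omega$ is determined by the weights $a_x$, and the condition $\mathrm{Lie}(a_x)\in\Phi_0^s$ makes this a GIT problem.

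\textbf{Key steps, in order.}

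1. We show that the total parameter space, or a big open subset whose complement has codimension $\geq 2$, is of Fano type, or at least that the anticanonical class descends. The flag varieties $G/P_x$ are Fano. On the parameter side, the canonical bundle of the moduli stack $\mathrm{Bun}_G$ is $\mathcal{L}^{-2h^\vee}$, a negative multiple of the ample generator of $\mathrm{Pic}$.

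2. We compute $K$ of the moduli stack of parabolic bundles $\mathrm{Bun}_{G,\{P_x\}}$. It equals $-2h^\vee\Theta-\sum_x(\text{anticanonical of }G/P_x)$, that is, it is anti-ample in the appropriate parabolic sense, namely $-(2h^\vee,\,2\rho_{P_x})$.

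3. We pass to the stable locus $\mathcal{M}^s_{G,\omega}$. We prove that the strictly semistable locus and the non-regularly-stable locus have codimension $\geq 2$ when $g\geq 3$; this is exactly where the genus hypothesis enters, via standard dimension counts of Harder–Narasimhan/Jordan–Hölder strata for classical groups. Then $K_{\mathcal{M}_{G,\omega}}$ is the descent of $K_{\mathrm{Bun}}$, so $-K_{\mathcal{M}}$ is the descent of the line bundle with parameters $(2h^\vee,2\rho_{P_x})$. That line bundle is big (it is effective with sections, since it is a positive level bundle) but in general not proportional to $\mathcal{L}_\omega$, so it need not be ample.

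4. To fix this, we choose the boundary. Write $-K_{\mathcal{M}}=\epsilon\,\theta_\omega+E$, where $\theta_\omega$ is the ample descended bundle. We need $E$ effective, realized by a small multiple of a general member of a linear system of conformal blocks / generalized theta divisors. Then $\Delta=\delta D$ for general $D$ gives $-(K+\Delta)$ ample, or nef and big. The klt property follows since $\mathcal{M}_{G,\omega}$ has rational singularities. Indeed, it is a good quotient of a smooth variety, so Boutot's theorem applies, and since $K$ is $\mathbb{Q}$-Cartier it is in fact klt. This uses that $\mathrm{Pic}$ of the stack is torsion-free and that $\mathcal{M}$ is $\mathbb{Q}$-factorial in the relevant range. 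Bertini then keeps $(\mathcal{M},\delta D)$ klt for small $\delta$.

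\textbf{Main obstacle.} The hardest part will be step 3: the codimension estimates for the non-stable loci for every classical group and every parabolic datum $\omega$. This is what forces $g\geq 3$, and it is needed to identify $K_{\mathcal{M}}$ with the descended canonical class. My first attempt will be to bound the dimension of each stratum by the dimension of bundles reducing to a Levi of a proper parabolic subgroup plus the flag data. I will then check that the excess is at least $(g-1)\cdot\dim(\mathfrak{u})-\sum_x\dim(\text{fibres})\geq 2$. A secondary difficulty is the klt property. For this I would rather invoke the criterion that a GIT quotient $X/\!\!/G$ of an affine or projective variety of Fano type is of Fano type. Applying it to $\mathcal{R}\times\prod G/P_x$, once $\mathcal{R}$ is arranged to be of Fano type, bypasses the explicit boundary construction.
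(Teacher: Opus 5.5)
There is a genuine gap, and it sits in the step you treat as routine: producing $\Delta$ with $(\mathcal{M}_{G,\omega},\Delta)$ klt and $-(K+\Delta)$ ample.

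\textbf{The klt argument fails.} Boutot's theorem only gives rational singularities. Rational singularities together with a $\mathbb{Q}$-Cartier $K$ do not imply klt; only a Cartier $K$ upgrades rational to canonical. For instance, the cone over an Enriques surface is rational with $2K$ Cartier, but it is only log canonical. Your appeal to $\mathbb{Q}$-factoriality is also unfounded. For non-generic $\omega$ and $g\geq 3$, the map $\mathcal{M}_{G,\bar\omega}\to\mathcal{M}_{G,\omega}$ coming from a generic perturbation $\bar\omega$ is small. A nontrivial small birational morphism never has a $\mathbb{Q}$-factorial target, so in general you cannot even assume $K_{\mathcal{M}_{G,\omega}}$ is $\mathbb{Q}$-Cartier.

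\textbf{The boundary construction fails.} ``Effective, hence big'' is a non sequitur. Even granting $-K=\epsilon\theta_\omega+E$, the class $E$ is, up to scaling, the polarization of a different chamber of weights. So $|mE|$ typically has base locus on $\mathcal{M}_{G,\omega}$, and Bertini gives no control of klt-ness there. Moreover $\delta$ is not free. You need $\delta m=1$ to get $-(K+\Delta)=\epsilon\theta_\omega$; otherwise you are left with $\epsilon\theta_\omega+(1-\delta m)E$, which is big but not nef. Your fallback, ``GIT quotients of Fano type are Fano type'', needs a projective Fano-type parameter space. An open piece of a Quot scheme or an affine Grassmannian is not one, and you give no argument that it is.

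\textbf{The missing idea is already latent in your Step 3.} Choose the weights so that the descended anticanonical class \emph{is} the polarization. The paper does this as follows.
\begin{itemize}
\item It takes $\omega_c$ with Borel subgroups and $a_x=b_x/m_{ad}$, so that $2\rho=m_{ad}\chi_{a_x}$ with $\mathrm{Lie}(a_x)\in\Phi_0^s$.
\item It shows $\omega^{-1}_{\mathcal{M}_{G,\omega_c}}$ is ample for types $C$ and $D$ (for type $B$, that $\omega^{-2}$ is). The tools are a finite map to an $\mathrm{SL}(V)$-moduli space, or to a $\mathrm{GL}(V)$-moduli space after a Hecke modification in type $D$; Sun's theta bundle, which is where $2m\mid |I|$ is needed; and the Kumar--Narasimhan--Ramanathan identity $\mathcal{D}_{\mathrm{Ad}}^{m_\phi}=\det R\,pr_{2*}\mathcal{E}(V)^{-m_{ad}}$.
\item A generic perturbation $\bar\omega$ then gives a small birational $f:\mathcal{M}_{G,\bar\omega}\to\mathcal{M}_{G,\omega_c}$. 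Here $\mathcal{M}_{G,\bar\omega}$ is klt by Braun's theorem on reductive quotients, since $K$ is $\mathbb{Q}$-Cartier there. Also $-K=f^*(-K)$ is nef and big, so $\mathcal{M}_{G,\bar\omega}$ is of Fano type with no boundary to construct.
\item Fano type is transported to arbitrary $\omega$ with Borel structures through isomorphism in codimension one, using the $g\geq 3$ codimension bounds. The tool is the characterization ``Fano type $\Leftrightarrow$ Mori dream space of globally F-regular type'' of Gongyo--Okawa--Sannai--Takagi. It only sees $\mathrm{Cl}$ and the Cox ring, so it needs neither $\mathbb{Q}$-factoriality nor $\mathbb{Q}$-Cartierness of $K_{\mathcal{M}_{G,\omega}}$.
\item General $P_x$ and the divisibility condition are handled by adding Borel points and using that images of Fano type varieties are of Fano type (Fujino--Gongyo).
\end{itemize}
Note also that the codimension estimates you flag as the main obstacle are quoted inputs in the paper. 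The real content is the positivity and singularity step above.
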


The case $G=\mathrm{SL}_r$ was proved in Theorem 4.12 of \cite{MY}. Our proof mainly depends on the codimension estimates (see Section \ref{subsection codimension}) and the formula of the anti-canonical bundle of the moduli space (see Section \ref{section fano}). We observe that, for the so-called parabolic data $\omega_c$, the moduli space  $\mathcal{M}_{G,\,\omega_c}$ is Fano for a simple and simply connected group $G$ of type $C$ or $D$ (see Proposition \ref{thm type C} $\&$ \ref{thm type D}), and $\omega^{-2}_{\mathcal{M}_{G,\omega_c}}$ is an ample line bundle for the type $B$ case (see Remark \ref{rmk type B}). By analyzing the singularities, we prove that
$\mathcal{M}_{G,\omega_c}$ is of Fano type when $G$ is classical. Then, for general parabolic data $\omega$, our theorem is proved by using the codimension estimates.

As a consequence, the moduli space $\mathcal{M}_{G,\omega}$ is a Mori dream space, and its Cox ring $\mathrm{Cox}(\mathcal{M}_{G,\omega})$ is finitely generated when  $g\geq 3$ (see $\text{Corollary \ref{cor cox ring}}$). Moreover, the same argument shows that, the above conclusions remain valid whenever ${\mathrm{Codim}(\mathrm{Bun}_{G,\omega}\setminus \mathrm{Bun}_{G,\omega}^{rs})\geq 2}$ (even for $g\leq 2$, see $\text{Remark \ref{rmk g=0,1,2}}$). This allows us to prove the second main result of this article, which concerns the {\em algebra of conformal blocks} for genus $g\geq 0$ (see the proof of Theorem \ref{f.g. for singular} for details).

Let $X=(C,\{x\}_{x\in I})\in \overline{\mathcal{M}}_{g,n}$ be an $n$-pointed stable curve of genus $g$, where $n=|I|$. Let
$\mathfrak{g}$ be a simple finite dimensional Lie algebra,
$\ell \in \mathbb{Z}_{\geq 0}$ be an integer, and $\alpha=\{\alpha_x\}_{x\in I}$ be a collection of dominant integral weights such that $(\alpha_x,\theta)\leq \ell$, where $\theta$ denotes the highest root. 
One can construct a finite dimensional vector space $\mathbb{V}_{X,\mathfrak{g},\ell,\alpha}^\dagger$ (see \cite{TUY} or \cite[Section 3.1]{MR2433154}), the so-called {\em space of conformal blocks}.
Then the direct sum (see (\ref{equation graded})) $$\mathbb{V}_{X,\mathfrak{g}}^{\dag}:=\bigoplus_{{{\ell,\alpha}}}
\mathbb{V}_{X,\mathfrak{g},\ell,\alpha}$$
acquires the structure of a commutative graded $\mathbb{C}$-algebra, called the {\em algebra of conformal blocks}.

\begin{thm}[=Theorem \ref{f.g. for singular}]
For any $n$-pointed stable curve $X =(C,\{x\}_{x\in I})\in \overline{\mathcal{M}}_{g,n}$ and any classical simple Lie algebra $\mathfrak{g}$, the \emph{algebra of conformal blocks} $\mathbb{V}_{X,\mathfrak{g}}^\dagger$ is finitely generated.
\end{thm}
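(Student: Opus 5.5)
Theorem~\ref{f.g. for singular} will be proved by reducing to smooth curves and then identifying the algebra of conformal blocks with a Cox-type ring of a moduli space of parabolic bundles that is of Fano type.

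\emph{Step 1: smooth curves.} Let $X=(C,\{x\}_{x\in I})$ with $C$ smooth and $G$ the simply connected group with Lie algebra $\mathfrak{g}$. Take $P_x=B$ a Borel subgroup at every point, so that $\mathrm{Bun}_{G,\omega}$ is the stack of $G$-bundles with full flags at the $x\in I$. By the Beauville--Laszlo, Kumar--Narasimhan--Ramanathan and Laszlo--Sorger identification,
\[\mathbb{V}^\dagger_{X,\mathfrak{g},\ell,\alpha}\cong H^0\bigl(\mathrm{Bun}_{G,\omega},\mathcal{L}(\ell,\alpha)\bigr),\]
where $\mathcal{L}(\ell,\alpha)$ is $\mathcal{L}^{\ell}$ twisted by the line bundles attached to the characters $\alpha_x$. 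The algebra structure is compatible with tensor product of sections, so $\mathbb{V}^\dagger_{X,\mathfrak{g}}$ is the section ring of a finitely generated monoid of line bundles on the stack. The pairs $(\ell,\alpha)$ are lattice points of a rational polyhedral cone, and the condition $(\alpha_x,\theta)\le\ell$ is automatic since sections vanish otherwise.

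Choose rational weights $\omega$ in the interior of the alcove such that $\mathrm{Codim}(\mathrm{Bun}_{G,\omega}\setminus\mathrm{Bun}^{rs}_{G,\omega})\ge 2$. For $g\ge 3$ this follows from the codimension estimates of Section~\ref{subsection codimension}. For $g\le 2$ we need $n$ large or generic weights, and otherwise we add auxiliary marked points carrying the trivial weight, using propagation of vacua. Then sections over the stack agree with sections over the good moduli space $\mathcal{M}_{G,\omega}$, and by Remark~\ref{rmk g=0,1,2} $\mathcal{M}_{G,\omega}$ is of Fano type, hence a Mori dream space with finitely generated Cox ring (Corollary~\ref{cor cox ring}).

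The descent requires care: a line bundle $\mathcal{L}(\ell,\alpha)$ on the stack descends to $\mathcal{M}_{G,\omega}$ only up to a finite-index sublattice, because of the centre of $G$ and stabilizers. I will handle this by Veronese-type arguments. The section algebra over a finite-index submonoid is finitely generated, being a subring of the Cox ring cut out by a rational cone. The full algebra is then finite over it, since each graded piece is a module over a multiple and the integrality argument goes through. Hence $\mathbb{V}^\dagger_{X,\mathfrak{g}}$ is finitely generated when $C$ is smooth.

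\emph{Step 2: nodal curves.} Use factorization (Tsuchimoto; Tsuchiya--Ueno--Yamada) at the normalization $\tilde X$ with the preimages $p_i,q_i$ of the nodes marked:
\[\mathbb{V}^\dagger_{X,\ell,\alpha}\cong\bigoplus_{\beta}\mathbb{V}^\dagger_{\tilde X,\ell,\alpha,\beta,\beta^*}.\]
Following Manon's approach for $\mathrm{SL}_r$, equip $\mathbb{V}^\dagger_X$ with a filtration by the levels of the $\beta$'s, indexed by the dominance order. The associated graded algebra is the $T$-invariants, for a torus $T$ acting through the node weights, of $\bigotimes_j\mathbb{V}^\dagger_{\tilde X_j}$ over the components. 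Each tensor factor is finitely generated by Step 1, and invariants of a torus are finitely generated. A finitely generated associated graded for an exhaustive, bounded-below filtration then implies finite generation of $\mathbb{V}^\dagger_X$.

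\emph{Main obstacle.} The hardest point is Step 1: checking the codimension condition for all types, genera and numbers of points, and controlling descent to $\mathcal{M}_{G,\omega}$. The filtration in Step 2 also needs its multiplicativity verified via sewing, which I expect to follow the existing $\mathrm{SL}_r$ argument.
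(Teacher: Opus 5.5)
Your argument is correct in outline, and Step 1 is essentially the paper's. It uses the Cox ring of a Fano-type $\mathcal{M}_{G,\omega}$ with full flags. For $g\le 2$ it adds auxiliary points and then passes to the subring with zero conformal-block weight there, via propagation of vacua. State explicitly that these points carry nonzero interior \emph{parabolic} weights, since that is what feeds Proposition \ref{prop codim semistable and stable parabolic}.

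Two things differ from the paper.

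\textbf{Descent.} The paper needs no Veronese step. On the $Z(G)$-gerbe $\mathrm{Bun}^{rs}_{G,\omega}\to\mathcal{M}^{rs}_{G,\omega}$, a line bundle on which $Z(G)$ acts by a nontrivial character has no nonzero sections. Hence $\mathrm{Cox}(\mathcal{M}_{G,\omega})=\mathbb{V}^\dagger_{X,\mathfrak{g}}$ on the nose (Lemma \ref{useful lemma 1}). Your route also works, but integrality over the Veronese subring is not yet module-finiteness. Upgrade it by the finiteness of the integral closure of a finitely generated domain in the finite extension $\mathrm{Frac}(\mathbb{V}^\dagger_{X,\mathfrak{g}})$.

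\textbf{Nodal curves.} The paper reads the factorization isomorphism as an injective algebra map into $\bigotimes_j\mathbb{V}^\dagger_{X_j^+,\mathfrak{g}}$. It identifies $\mathbb{V}^\dagger_{X,\mathfrak{g}}$ with the subring $R_B$ (equal levels on all components, dual weights on the two branches of each node) and applies Lemma \ref{useful lemma 2}. This is shorter and also yields integral closedness (Remark \ref{rmk closed}).

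You instead identify only the associated graded of Manon's node-weight filtration with that same $R_B$, and then lift finite generation. Remember to build the equal-level condition into your torus. This costs a standard filtered-to-graded lemma; refine dominance to a total order, for example by pairing with $\rho^\vee$. In exchange, it requires sewing to be multiplicative only modulo lower node weights.

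That weaker property is all sewing gives in general. The identity of $V_\mu\otimes V_\nu$ has components in every $V_\lambda\subset V_\mu\otimes V_\nu$. For instance, for $\mathfrak{sl}_2$ on a one-nodal rational curve, the level-one block of node weight $1$ behaves like the character $\chi_1$ of the gluing matrix. Its square $\chi_1^2=\chi_2+\chi_0$ has a nonzero node-weight-$0$ part. So your filtered version is the more robust way to run this step.
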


Belkale and Gibney first handled the case $\mathfrak{g}=\mathfrak{sl}_r$ with $n=0$ (see \cite{B.G}).
Then Moon and Yoo proved the case $\mathfrak{g}=\mathfrak{sl}_r$ for arbitrary $n$ (see \cite{MY}) using a completely different method.  The two research groups both asked whether the result holds for any simple Lie algebra $\mathfrak{g}$.
Our proof follows a method similar to that in \cite{MY}. For simple Lie algebras of type $A$ or $C$ with $n=0$ and $g\geq 2$, Wilson also proved that the {\em algebra of conformal blocks} is finitely generated (see \cite{W}).

Let $\Phi: G\rightarrow \mathrm{SL}(V)\subset \mathrm{GL}(V)$ be the standard representation, and set (see (\ref{equation fundamental alcove subset}))
$$\Phi_0^{\ast}:=\{h\in \mathfrak{h}\mid \,\theta_{\mathrm{sl}(V)}(h)< 1, \alpha(h)\geq 0\,\, \forall\, \alpha\in \Delta_+\}\subset \Phi_0^s.$$
We have the following corollary, which generalizes $\text{\cite[Theorem 1.2]{B.G}}$ and \cite[Theorem 1.2]{MY}.
{\begin{cor}[=Corollary \ref{corollary compactify} $\&$ Remark \ref{rmk 5.7}] Let $G$ be a classical simple and simply connected group. Suppose that $g\geq 2$ or $\tau_x=\mathrm{Lie}(a_x)\in \Phi_0^{
\ast}$ for $x\in I$. Then there is a flat family $\mathbb{M}\rightarrow \overline{\mathcal{M}}_{g,n}$ $\text{such that}$
\begin{itemize}
\item[(1)] for $X=(C,\{x\}_{x\in I})\in \mathcal{M}_{g,n}$, we have $\mathbb{M}_X\cong \mathcal{M}_{G,\omega}$$;$
\item[(2)] the fibre $\mathbb{M}_X$ over $X\in \overline{\mathcal{M}}_{g,n}-\mathcal{M}_{g,n}$ is an irreducible normal projective variety.
\end{itemize}
\end{cor}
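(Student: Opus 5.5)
The plan follows the Belkale--Gibney and Moon--Yoo strategy for $\mathrm{SL}_r$: realize $\mathbb{M}$ as a relative Proj of a sheaf of algebras of conformal blocks over $\overline{\mathcal{M}}_{g,n}$. Fix the classical group $G$ with Lie algebra $\mathfrak{g}$, and choose the weight $a$ rational. Then the pairs $(\ell,\alpha)$ with $\alpha_x=\ell\cdot\tau_x$ integral form a sub-semigroup $\{(m\ell_0, m\alpha_0)\}_{m\ge 0}$.

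\textbf{Step 1: the family.} Over $\overline{\mathcal{M}}_{g,n}$ (or a finite cover or stack, to have universal families), take the sheaves of conformal blocks $\mathbb{V}^\dagger_{\mathfrak{g},m\ell_0,m\alpha_0}$. By TUY these are vector bundles, compatible with base change, and they carry a multiplication. Put
\[
\mathcal{A}=\bigoplus_{m\ge 0}\mathbb{V}^\dagger_{\mathfrak{g},m\ell_0,m\alpha_0},\qquad \mathbb{M}=\mathrm{Proj}_{\overline{\mathcal{M}}_{g,n}}\mathcal{A}.
\]
This is a Veronese-type subalgebra of the algebra of conformal blocks. Theorem \ref{f.g. for singular} shows each fibre algebra $\mathcal{A}_X$ is finitely generated. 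For relative finite generation, I would use local freeness of each graded piece together with a uniform bound on the generating degree, which follows from semicontinuity and the compactness of $\overline{\mathcal{M}}_{g,n}$. Since $\mathcal{A}$ is a direct sum of locally free sheaves and commutes with base change, $\mathbb{M}$ is flat with fibre $\mathbb{M}_X=\mathrm{Proj}\,\mathcal{A}_X$.

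\textbf{Step 2: smooth fibres.} For $X\in\mathcal{M}_{g,n}$, the uniformization theorem (Beauville--Laszlo, Kumar--Narasimhan--Ramanathan, Laszlo--Sorger) and its parabolic version identify $\mathbb{V}^\dagger_{X,\mathfrak{g},m\ell_0,m\alpha_0}$ with $H^0(\mathrm{Bun}_{G,\omega},\mathcal{L}^m)$. Here $\mathcal{L}$ descends to an ample line bundle on $\mathcal{M}_{G,\omega}$ when $\tau_x\in\Phi_0^s$. We need the isomorphism $H^0(\mathrm{Bun}_{G,\omega},\mathcal{L}^m)\cong H^0(\mathcal{M}_{G,\omega},\mathcal{L}^m)$. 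This holds because $\mathcal{M}_{G,\omega}$ is a good quotient and the unstable locus has high codimension. The latter is given by the codimension estimates, where the hypothesis $g\ge2$ or $\tau_x\in\Phi_0^\ast$ enters, as in Remark \ref{rmk g=0,1,2}. This yields $\mathbb{M}_X\cong\mathrm{Proj}\bigoplus_m H^0(\mathcal{L}^m)=\mathcal{M}_{G,\omega}$.

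\textbf{Step 3: boundary fibres.} For a nodal $X$, the factorization rules express $\mathbb{V}^\dagger_X$ on the normalization $\tilde X$ as a sum over weights $\mu$ at the preimages of the nodes. Normalizing the partially glued curve shows that $\mathcal{A}_X$ is the invariant part, under a torus acting via the $\mu$-grading, of a larger algebra built from $\mathcal{A}_{\tilde X}$ with extra marked points. Thus $\mathbb{M}_X$ is a GIT quotient of a variety that, by Step 2 applied componentwise, is a product of parabolic moduli spaces, which are normal and irreducible. Quotients preserve normality. Irreducibility and nonemptiness follow from flatness, because $\mathbb{M}_X$ is a limit of irreducible fibres and the fibres are connected and normal. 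Projectivity is automatic from Proj of a finitely generated algebra with finite-dimensional degree-$0$ part equal to $\mathbb{C}$.

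\textbf{Main obstacle.} I expect Step 3 to be the hardest, specifically proving that $\mathcal{A}_X$ is a normal domain, i.e. that the algebra of conformal blocks on a nodal curve is integrally closed. My first approach would be the Manon and Belkale--Gibney degeneration: identify the multiplication on nodal conformal blocks with the invariant ring of the product of smooth-curve algebras under the node torus, then deduce normality from the normality of the Cox-type algebras of $\mathcal{M}_{G,\omega}$ established through Fano type.
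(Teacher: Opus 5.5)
Your architecture is the paper's. You take $\mathbb{M}=\mathbf{Proj}(\mathbb{R}_B)$ for a rank-one $B$ spanned by a bundle descending to an ample one on $\mathcal{M}_{G,\omega}$. You identify the smooth fibres via uniformization plus extension of sections from $\mathrm{Bun}^{ss}_{G,\omega}$, and you get normal boundary fibres from normality of $R_B$. For $g\ge 2$ your Step~2 is exactly the paper's argument, via Remark~\ref{rmk codim stable principal}(2). The paper also avoids your unproved claim that $\mathcal{L}(\ell_0,\ell_0\tau)$ descends to an ample bundle, by extending $\pi_G^\ast L$ for an arbitrary ample $L$.

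\textbf{The case $g<2$, $\tau_x\in\Phi_0^\ast$ fails as you state it.} The hypothesis $\tau_x\in\Phi_0^\ast$ gives no codimension estimate, and none holds. On $C=\mathbb{P}^1$ with sufficiently small weights (which lie in $\Phi_0^\ast$), every $G$-bundle of non-trivial splitting type is unstable. These bundles form a divisor in $\mathrm{Bun}_G$ (the theta divisor), hence also in $\mathrm{Bun}_{G,\omega}$. So Hartogs-type extension from the semistable locus is unavailable. The paper uses $\Phi_0^\ast$ differently. It gives $\theta_{\mathrm{sl}(V)}(\tau'_x)<1$, which yields:
the GIT presentation $\mathcal{M}_{G,\omega}=\mathfrak{F}^{ss}_{\Phi\text{-}\mathrm{FlPsBun}}/\!/\mathrm{GL}(Y)$;
the finite map $\psi$ of Lemma~\ref{lem finite map type C};
and the bundle $\Theta_{\mathrm{Bun}_{G,\omega}}=\bar\Phi^\ast\Theta_{\mathrm{Bun}_{\mathrm{SL}(V),\widetilde\omega}}$, which descends to the ample $\psi^\ast\Theta_{\mathcal{M}_{\mathrm{SL}(V),\widetilde\omega}}$.
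An argument as in \cite[Proposition 6.5]{KNR} then gives $H^0(\mathrm{Bun}_{G,\omega},\Theta^n)=H^0(\mathrm{Bun}^{ss}_{G,\omega},\Theta^n)$, even though the unstable locus may be a divisor.

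\textbf{Step~3 also does not work as described.} You cannot apply Step~2 componentwise. Components of the normalization can have genus $0$ or $1$. The weights $\mu$ at the preimages of the nodes range over the whole level-$\ell$ alcove. So $\bigotimes_j\mathbb{V}^\dagger_{X_j^+,\mathfrak{g}}$ is not a section ring of a product of fixed parabolic moduli spaces. What you need is that each full algebra $\mathbb{V}^\dagger_{X_j^+,\mathfrak{g}}$ is a finitely generated normal domain, in every genus. The paper gets this in four moves:
\begin{itemize}
\item it adds many auxiliary points $J$ so that $\mathrm{Codim}(\mathrm{Bun}_{G,\omega_J}\setminus\mathrm{Bun}^{rs}_{G,\omega_J})\ge 2$ (Propositions~\ref{prop codim semistable and stable parabolic} and~\ref{prop codimension regularly stable});
\item it identifies the algebra with $\mathrm{Cox}(\mathcal{M}_{G,\omega_J})$ (Lemma~\ref{useful lemma 1}), which is finitely generated by Fano type;
\item the Cox ring is integrally closed by \cite[Corollary 1.2]{EKW}, not by Fano type as you suggest;
\item it returns to fewer points via propagation of vacua and $R_B=R^H$ (Lemma~\ref{useful lemma 2}).
\end{itemize}
Both finite generation and normality survive these passages. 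The nodal algebra, and your $\mathcal{A}_X$, are then again of the form $R_B$.

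Irreducibility is then immediate, since $R_B$ is a domain. Your connectedness argument would additionally need properness of $\mathbb{M}$, i.e.\ the relative finite generation of Step~1. Your semicontinuity argument for that does not work as stated: generation in degrees $\le d$ is an intersection of infinitely many open conditions, which need not be open.
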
}
For simple Lie algebras of type $A$ or $C$ with $g\geq 2$, Wilson also showed that moduli spaces of semistable principal $G$-bundles on smooth curves admit a limit for $X\in \overline{\mathcal{M}}_{g}-\mathcal{M}_{g}$; this limit is precisely the normalized moduli space of semistable honest singular $G$-bundles on $X$ (see \cite{W}).

We briefly describe the contents of this article. In Section 2, preliminaries about {\em parabolic $G$-bundles} and constructions of the moduli space $\mathcal{M}_{G,\omega}$ of semistable {\em parabolic $G$-bundles} are recalled,  codimension estimates related to the loci of stable {\em parabolic $G$-bundles} and regularly stable {\em parabolic $G$-bundles} in the stack $\mathrm{Bun}_{G,\omega}$ of {\em parabolic $G$-bundles} are given, and the {\em algebra of conformal blocks} is introduced. In $\text{Section 3}$, we give the formula of the anti-canonical bundle of the moduli space $\mathcal{M}_{G,\,\omega_c}$, for the so-called canonical parabolic $\text{data $\omega_c$}$. In Section 4, we prove that the moduli spaces are of Fano type. $\text{Section 5}$ is devoted to showing the finite generation of the \emph{algebra of conformal blocks} for a classical simple Lie algebra.

\medskip

{\it Acknowledgements:} The authors would like to thank J. Heinloth for helpful discussions. The authors are also grateful to the anonymous $\text{referees}$ for several helpful suggestions, correcting our $\text{misunderstanding}$ of the reference \cite{B.G}, and pointing out the missing reference \cite{W}.

\section{Preliminaries}

\subsection{Moduli spaces of parabolic $G$-bundles}\label{Section equivariant}

Let $X$ be a scheme, $G$ be an affine algebraic group, and $E$ be a principal $G$-bundle on $X$.
Given a representation $\rho:G\to {\rm GL}(V)$, we use $E_{{\rm GL}(V)}$ to denote the associated ${\rm GL}(V)$-bundle, and $E(V)$ to denote the associated vector bundle
on $X$. But, when ${\rm dim}(V)=1$ (i.e., ${\rho:G\to \mathbb{G}_m}$ is a character of $G$), we use $E(\rho)$ to denote the associated line bundle $\text{on $X$}$.

Let $C$ be a smooth projective curve over $\mathbb{C}$, and $I\subset C$ be a finite subset. Fix a parabolic subgroup ${P_x\subset G}$ for each $x\in I$. Let $X^*(P_x):=\mathrm{Hom}(P_x,\mathbb{G}_m)$ be the character group, $X^*(P_x)^\vee_{\mathbb{Q}}:=\mathrm{Hom}(X^*(P_x),\mathbb{Q})$ be the rational cocharacter group of $P_x$, and
\[
X^*(P_x)^\vee_{\mathbb{Q},+}:=\left\lbrace {a}_x \in X^*(P_x)^\vee_{\mathbb{Q}}\bigg|
\begin{matrix} \aligned
&\, \text{for the given subgroup}\,  P_x\subsetneq P, \\&  \,  \text{we have}\,\, {a}_x(\mathrm{det}_{P} \otimes \mathrm{det}^{-1}_{P_x})<0 \endaligned
\end{matrix}
\right\rbrace
.\]

\begin{defn}\label{defn 1.1} (1) By a family of {\em quasi-parabolic $G$-bundles} of type $\{P_x\}_{x\in I}$ over a scheme $T$, we mean a $G$-bundle $\sE_T$ on $C\times T$ with
a set of sections $\{S_{x,T}:\{x\}\times T\to \sE_T|_{\{x\}\times T}/P_x\}_{x\in I}$. In particular, we obtain a {\em quasi-parabolic $G$-bundle} on $C$ when $T$ is a point.

(2) By a {\em parabolic $G$-bundle}, we mean a {\em quasi-parabolic $G$-bundle} $E$ of type $\{P_x\}_{x\in I}$ on $C$, together with the given weight
 \[{a}=\{{a}_x\}_{x\in I}, \quad {a}_x\in X^{\ast}(P_x)^{\vee}_{\mathbb{Q},+}.\]
$\omega:=(\{P_x\}_{x\in I}, {a})$ is called the parabolic data.
\end{defn}

For any parabolic subgroup $P\subset G$, $G\to G/P$ is a $P$-bundle, and
\begin{equation}\label{2.1} X^*(P)\xrightarrow{L} {\rm Pic}(G/P),\quad \chi \mapsto G(\chi)^{-1}:=L(\chi)
\end{equation}
is an isomorphism of groups when $G$ is simple and simply connected.

To see that ${a}_x$ indeed determines an ample line bundle $L(k\chi_{a_x})$ (for some $k\in \mathbb{Z}^+$) on $G/P_x$ ($x\in I$), let $X_*(P_x)$ be the group of one-parameter subgroups $\mathbb{G}_m\to P_x$
($1$-PS for short). Then any $\lambda\in X_*(P_x)$ determines uniquely a parabolic subgroup $P(\lambda)$ which is generated by the maximal torus $T$ (containing $\lambda$) and
 root groups $U_{{a}}$ with ${a}(\lambda)\ge 0$ (see \cite[Remark 1.4]{RR}).
Since $P(\lambda)=P(n\lambda)$ for $n\in \mathbb{Z}^+$, $P(\lambda)$ is well-defined for $\lambda\in X_*(P_x)_{\mathbb{Q}}$, and it depends only on the conjugacy class of $\lambda$. Note that
$$X^*(P_x)\times X_*(P_x)\to \mathbb{Z},\quad (\chi,\lambda)\mapsto \chi(\lambda)$$
is invariant under the conjugation. Therefore, the conjugacy class of $\lambda$ determines a rational cocharacter $[\lambda]\in X^*(P_x)^\vee_{\mathbb{Q}}$. In particular,
$$[\lambda]\in X^*(P_x)^\vee_{\mathbb{Q},+} \,\,\Leftrightarrow\,\,P_x=P(\lambda).$$
We will use the same symbol $\lambda$ to denote both rational $1$-PS $\lambda$ and rational cocharacter $[\lambda]$. Then $P_x=P({a}_x)$ holds for the rational $1$-PS ${a}_x$, which determines a dominant rational character $\chi_{{a}_x}\in X^*(P_x)_{\mathbb{Q}}$. In fact, let $T\subset P_x$ and $(-,\,-)_{\mathfrak{g}}:X_*(T)\times X_*(T)\to \mathbb{Z}$ be the pairing defined by the normalized Killing form. For $\lambda\in X_*(T)_{\mathbb{Q} }$,  $\chi_{\lambda}\in X^*(T)_{\mathbb{Q}}$ is defined by
\begin{equation}\label{2.2}
\chi_{\lambda}(\lambda'):=(\lambda,\,\lambda')_{\mathfrak{g}},\quad \forall\,\,\lambda'\in X_*(T)_{\mathbb{Q}}.
\end{equation}
When $P_x=P({a}_x)$, one has $\chi_{{a}_x}\in X^*(P_x)_{\mathbb{Q}}$ (see \cite[Remark 1.11 (c)]{RR}), which determines an ample line bundle $L(k\chi_{a_x})$ on $G/P_x$ for some $k\in \mathbb{Z}^+$. Moreover, we have

\begin{prop}[see Proposition 5.1.1 of \cite{Heinloth}]\label{prop2.3} The above construction $$\lambda\mapsto (P(\lambda),\,\chi_{\lambda})$$ is a surjective map from the set of $1$-PS of $G$ to the set of
pairs $(P,\,\chi)$, where  $\chi$ is a dominant character of the parabolic subgroup $P\subset G$.
\end{prop}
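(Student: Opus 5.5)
The plan is to build a pair $(P,\chi)$ from a cocharacter and then invert the construction. Given a parabolic $P\subset G$ and a dominant character $\chi\in X^*(P)$, I will produce a (rational, then integral) $1$-PS $\lambda$ with $P(\lambda)=P$ and $\chi_\lambda=\chi$, up to conjugation.

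First I normalize. Choose a maximal torus $T$ and a Borel $B\supset T$ with $B\subset P$, so that $P=P_S$ is the standard parabolic determined by a subset $S$ of the simple roots. The roots of the Levi $L$ are those spanned by $S$, and $X^*(P)$ is identified with the characters of $T$ orthogonal to the coroots of $S$. Dominance of $\chi$ means $\langle \chi,\alpha^\vee\rangle\ge 0$ for all simple $\alpha$, with equality exactly for $\alpha\in S$.

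Next I use the normalized Killing form. It is nondegenerate and $W$-invariant on $X_*(T)_{\mathbb{Q}}$, so by (\ref{2.2}) the map $\lambda\mapsto\chi_\lambda$ is an isomorphism $X_*(T)_{\mathbb{Q}}\to X^*(T)_{\mathbb{Q}}$. Under this map, coroots go to positive multiples of roots. Let $\lambda$ be the preimage of $\chi$. Then for each simple root $\alpha$ we get $\alpha(\lambda)=c_\alpha\langle\chi,\alpha^\vee\rangle$ with $c_\alpha>0$. Hence $\alpha(\lambda)\ge 0$ for all simple $\alpha$, with equality iff $\alpha\in S$. It follows that $P(\lambda)$, which is generated by $T$ and the $U_a$ with $a(\lambda)\ge0$, is exactly $P_S=P$. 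Also $\chi_\lambda=\chi$ by construction, and $\chi_\lambda$ lies in $X^*(P)$, consistent with \cite[Remark 1.11 (c)]{RR}.

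The main obstacle is integrality: $\lambda$ is a priori only rational, while the statement concerns honest $1$-PS. I would deal with it in one of two ways. The first is to argue that the normalized form makes $\chi_\lambda$ integral for integral $\lambda$ but not conversely, and that surjectivity is meant onto pairs with $\chi$ rational, or up to positive multiples. In that case one replaces $\chi$ by $k\chi$ and clears denominators. Since $P(k\lambda)=P(\lambda)$ and $\chi_{k\lambda}=k\chi_\lambda$, this is harmless for the application, where only ample line bundles $L(k\chi_{a_x})$ matter. The second way, in the precise formulation of \cite{Heinloth}, is to check directly that the preimage of the weight lattice under the normalized form contains the relevant points. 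I would first settle which formulation is intended and then cite \cite[Prop.~5.1.1]{Heinloth} for that convention.

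Finally, conjugation. Arbitrary $P$ is conjugate to a standard one, and both $P(\cdot)$ and the pairing are conjugation-invariant, so the result for standard parabolics gives surjectivity in general.
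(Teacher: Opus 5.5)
The paper gives no argument of its own here; it only cites \cite{Heinloth}. Your core step is the standard one and is correct. Inverting $\lambda\mapsto\chi_\lambda$ over $\mathbb{Q}$ via the nondegenerate normalized form gives $\alpha(\lambda)=\tfrac{(\alpha,\alpha)}{2}\langle\chi,\alpha^\vee\rangle$ for each simple $\alpha$. So $\lambda$ is $B$-dominant with $\alpha(\lambda)=0$ exactly for $\alpha\in S$, whence $P(\lambda)=P_S$ and $\chi_\lambda=\chi$. Your closing conjugation step is superfluous, since you already chose $T\subset B\subset P$. What is missing is a firm decision on what is being proved, and two of your hedges need correcting.

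\textbf{Integrality.} Drop your second option, because it cannot work. For integral $\lambda\in X_*(T)$ (the coroot lattice, as $G$ is simply connected), $\chi_\lambda$ is an integral combination of the characters $\chi_{\alpha^\vee}=\tfrac{2}{(\alpha,\alpha)}\alpha\in\{\alpha,2\alpha,3\alpha\}$. Hence $\chi_\lambda$ lies in the root lattice, which is a proper sublattice of $X^*(T)$ for every classical $G$, since their centres are nontrivial. Already for $\mathrm{SL}_2$ one has $(\alpha^\vee,\alpha^\vee)_{\mathfrak g}=2$, so $\chi_{n\alpha^\vee}=n\alpha=2n\varpi$, and $(B,\varpi)$ is attained only by the rational $1$-PS $\tfrac12\alpha^\vee$.

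So the proposition holds only for rational $1$-PS and rational characters. Equivalently, it holds for integral $\lambda$ after replacing $\chi$ by a positive multiple $k\chi$. This is exactly how the paper uses it: $a_x$ is a rational $1$-PS, $\chi_{a_x}\in X^*(P_x)_{\mathbb{Q}}$, and only $L(k\chi_{a_x})$ enters. State that version explicitly; your argument proves it completely.

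\textbf{Dominance.} The clause ``with equality exactly for $\alpha\in S$'' is not a consequence of dominance. It is a hypothesis the statement needs, and you should flag it as such. By your own computation, every pair in the image satisfies $\langle\chi,\alpha^\vee\rangle>0$ for all simple $\alpha\notin S$.

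For instance, when $\mathrm{rk}\,G\geq 2$ the pair $(B,\varpi_1)$ is not attained. Indeed, $\chi_\lambda=\varpi_1$ forces $\alpha(\lambda)=0$ for every simple $\alpha\neq\alpha_1$, so $P(\lambda)$ is the maximal parabolic attached to $\varpi_1$, not $B$. Thus ``dominant character of $P$'' must be read in the strict sense, the one making $L(k\chi)$ ample on $G/P$, as in the paper's use of $\chi_{a_x}$. It cannot be the weaker ``nontrivial dominant character'' of Definition \ref{definition semistability}.
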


Let ${\rm Bun}_G$ be the stack of $G$-bundles on $C$, and ${\rm Bun}_{G,\,\{P_x\}_{x\in I}}\xrightarrow{\pi} {\rm Bun}_G$
be the stack of \emph{quasi-parabolic $G$-bundles} of type $\{P_x\}_{x\in I}$. When $G$ is simple and simply connected, it is well-known (see \cite{L.S}) that
\begin{equation}\label{4.1} \mathrm{Pic}(\mathrm{Bun}_{G,\{P_x\}_{x \in I}}) \xrightarrow{\sim} \mathbb{Z} \times \prod_{x \in I} X^*(P_x) \cong \mathbb{Z} \times \prod_{x \in I} \mathrm{Pic}(G/P_x).
\end{equation}

Let $\sE$ be the universal $G$-bundle on $C\times {\rm Bun}_{G,\,\{P_x\}_{x\in I}}$, and
\begin{equation}\label{4.2}
\{\,{\rm Bun}_{G,\,\{P_x\}_{x\in I}}\xrightarrow{S_x} \sE_x/P_x\,\}_{x\in I},\,\,\sE_x:=\sE|_{\{x\}\times {\rm Bun}_{G,\,\{P_x\}_{x\in I}}}
\end{equation}
be the universal sections. For the adjoint representation ${{\rm Ad}: G\to {\rm SL}(\mathfrak{g})}$,
let $\sE(\mathfrak{g})$ be the associated vector bundle on $C\times {\rm Bun}_{G,\,\{P_x\}_{x\in I}}$, and
\begin{equation}\label{4.3}
\sD_{{\rm Ad}}:={\rm det}{\rm R}pr_{2_*}\sE(\mathfrak{g})^{-1},
\end{equation}
where ${\rm det}{\rm R}pr_{2_*}\sE(\mathfrak{g})$ is the determinant line bundle on ${\rm Bun}_{G,\,\{P_x\}_{x\in I}}$, and $pr_2$ is the projection
$C\times \mathrm{Bun}_{G,\{P_x\}_{x\in I}}\rightarrow \mathrm{Bun}_{G,\{P_x\}_{x\in I}}$. Let
$\sE(\chi_x)$ be the line bundle associated to $P_x$-bundle $\sE_x\to \sE_x/P_x$, and
\begin{equation}\label{4.4}
\sL(\chi_x):=S_x^*\sE(\chi_x)^{-1},\,\,\,\forall\,\, \chi_x\in X^*(P_x).
\end{equation}
We collect some basic facts for further applications.

\begin{lem}\label{thm forgetting map of stack}\label{lem anti-canonical}
Let $P_x\subset G$ $({x \in I\cup J})$ be parabolic subgroups. Assume that $I\cap J=\emptyset$.
\begin{enumerate}
\item
The forgetful morphism $\mathrm{Bun}_{G,\{P_x\}_{x \in I\cup J}} \to \mathrm{Bun}_{G,\{P_x\}_{x \in I}}$
is a locally trivial bundle with fibres $\prod_{x \in J} G/P_x.$

\item
Let $Q_x \subset P_x$ $(x\in I)$ be parabolic subgroups of $G$. Then there exists a natural morphism $\mathrm{Bun}_{G,\{Q_x\}_{x \in I}} \to \mathrm{Bun}_{G,\{P_x\}_{x \in I}}$, which is a locally trivial bundle with fibres $\prod_{x\in I}P_x/Q_x.$

\item Assume that $B_x\subset G$ $(x\in I)$ are Borel subgroups, and $\rho$ is the half sum of all positive roots. We have
\begin{equation}\label{4.5}
\omega^{-1}_{{\rm Bun}_{G,\,\{B_x\}_{x\in I}}}=\sD_{{\rm Ad}}\otimes\bigotimes_{x\in I}S_x^*\sE(2\rho)^{-1}:=\sD_{{\rm Ad}}\otimes\bigotimes_{x\in I}\sL(2\rho).
\end{equation}
\end{enumerate}
\end{lem}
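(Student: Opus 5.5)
Parts (1) and (2) will follow from the local structure of the universal objects. For (1), a point of $\mathrm{Bun}_{G,\{P_x\}_{x\in I\cup J}}$ is a point of $\mathrm{Bun}_{G,\{P_x\}_{x\in I}}$ together with extra sections $S_x$ of $\sE_x/P_x$ for $x\in J$. Hence the forgetful morphism is identified with the fibre product over the base of the bundles $\sE_x/P_x\to \mathrm{Bun}_{G,\{P_x\}_{x\in I}}$ for $x\in J$. Each $\sE_x$ is a $G$-torsor on the stack and is therefore étale (indeed smooth) locally trivial. So $\sE_x/P_x$ is locally $U\times G/P_x$, and the product over $J$ gives fibres $\prod_{x\in J}G/P_x$. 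Since $I\cap J=\emptyset$, the new sections impose no compatibility with the old ones.

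For (2), the morphism sends a $Q_x$-reduction $S'_x\colon pt\to \sE_x/Q_x$ to its image under $\sE_x/Q_x\to \sE_x/P_x$. The map $\sE_x/Q_x\to\sE_x/P_x$ is a fibre bundle with fibre $P_x/Q_x$: it is associated to the $P_x$-torsor $\sE_x\to\sE_x/P_x$ via the $P_x$-space $P_x/Q_x$. Pulling it back along $S_x$ shows that the fibre of $\mathrm{Bun}_{G,\{Q_x\}}\to\mathrm{Bun}_{G,\{P_x\}}$ is the product over $x\in I$ of the $P_x/Q_x$-bundles $S_x^*(\sE_x/Q_x)$. These are locally trivial because the $P_x$-torsors $S_x^*\sE_x$ are.

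For (3), I would compute using the tower $\pi\colon\mathrm{Bun}_{G,\{B_x\}}\to\mathrm{Bun}_G$, which by (1) is a locally trivial $\prod G/B_x$-bundle. This gives
$$\omega_{\mathrm{Bun}_{G,\{B_x\}}}=\pi^*\omega_{\mathrm{Bun}_G}\otimes\omega_{\pi}.$$

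The first step is the standard identification $\omega^{-1}_{\mathrm{Bun}_G}=\sD_{\mathrm{Ad}}$. The tangent complex of $\mathrm{Bun}_G$ at $E$ is $R\Gamma(C,E(\mathfrak g))[1]$, so $\omega^{-1}=\det(T)=\det R\Gamma(E(\mathfrak g))^{-1}$, which is the definition (\ref{4.3}).

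The second step computes $\omega_\pi$. The relative tangent bundle of $\sE_x/B_x\to\mathrm{Bun}_G$ is the bundle associated to the $B_x$-torsor $\sE_x\to\sE_x/B_x$ via the representation $\mathfrak g/\mathfrak b_x$. Its determinant is the line bundle attached to the character $\det(\mathfrak g/\mathfrak b)$, the sum of the weights of $\mathfrak g/\mathfrak b$. With the paper's convention that $\mathfrak b$ contains the positive root spaces, the weights of $\mathfrak g/\mathfrak b$ are the negative roots, so this character is $-2\rho$ and corresponds to $\sE(2\rho)^{-1}$. Pulling back by $S_x$ gives $\omega_\pi^{-1}=\bigotimes_x S_x^*\sE(2\rho)^{-1}=\bigotimes_x\sL(2\rho)$ by (\ref{4.4}). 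This is consistent with (\ref{2.1}): on each fibre it restricts to $L(2\rho)=\omega^{-1}_{G/B}$, which is ample.

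The main obstacle is the sign and convention bookkeeping in the second step. The associated-bundle convention $G(\chi)^{-1}=L(\chi)$ has to be matched with the orientation of $\mathfrak g/\mathfrak b$. I would check this on the fibre against the known $\omega^{-1}_{G/B}=L(2\rho)$, and only then globalize by the relative-tangent description above.
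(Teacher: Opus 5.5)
Your proposal is correct. The paper gives no proof of this lemma: it is introduced with ``We collect some basic facts'' and then used as standard. So your argument supplies the justification rather than competing with one.

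Parts (1) and (2) are the expected fibre-product descriptions. In (1) the source stack is the fibre product of the $\sE_x/P_x$ over the base. In (2) it is the pullback along $S_x$ of $\sE_x/Q_x=\sE_x\times^{P_x}(P_x/Q_x)\to\sE_x/P_x$. Local triviality holds in the smooth/\'etale topology, which is the appropriate sense for stacks.

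In (3), your factorization $\omega_{\mathrm{Bun}_{G,\{B_x\}}}=\pi^*\omega_{\mathrm{Bun}_G}\otimes\omega_\pi$ is legitimate because $\pi$ is smooth and representable by (1). The identity $\det\big(Rpr_{2*}\sE(\mathfrak g)[1]\big)=\sD_{\mathrm{Ad}}$ matches (\ref{4.3}). The sign is also right. For $\mathrm{SL}_2$, $\mathfrak g/\mathfrak b$ has weight $-\alpha$ and $T_{\mathbb{P}^1}=\mathcal{O}(2)$, so $G(-2\rho)$ is ample, in agreement with the convention $L(\chi)=G(\chi)^{-1}$ of (\ref{2.1}).

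One bookkeeping point is worth making explicit. Your relative tangent bundle lives on $\sE_x/B_x$ over $\mathrm{Bun}_G$, while the paper's $\sE_x$ and $S_x$ live over $\mathrm{Bun}_{G,\{B_x\}}$. These agree by base change, since $S_x$ is just the $x$-th projection of the fibre product.

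For comparison, the route more often found in the literature computes the tangent complex of $\mathrm{Bun}_{G,\{B_x\}}$ directly as $R\Gamma(C,E(\mathfrak g)^{\mathrm{par}})[1]$. Here $E(\mathfrak g)^{\mathrm{par}}=\ker\big(E(\mathfrak g)\to\bigoplus_x E(\mathfrak g)_x/\mathfrak b_{s_x}\big)$, and one takes determinants of the resulting short exact sequence. This gives (\ref{4.5}) in one step and carries over verbatim to parahoric or equivariant settings. Your fibration argument instead reuses (1) and cleanly separates the global contribution $\sD_{\mathrm{Ad}}$ from the flag-variety contribution $\omega^{-1}_{G/B}=L(2\rho)$.
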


Let $E:=(E,\{s_x\}_{x\in I})$ be a \emph{quasi-parabolic $G$-bundle} of type $\{P_x\}_{x \in I}$ on $C$. Given a parabolic subgroup $Q \subset G$, a character $\chi$ of $Q$, and a reduction $E_Q$ of $E$ to $Q$, via a trivialization $$Q_x:={\rm Aut}_Q(E_Q|_{x}) \cong Q,$$
we obtain a character $\chi_x$ of $Q_x$ for $x \in I$.  Similarly, $\{x\}\xrightarrow{s_x} E|_x/P_x$ defines a
$P_x$-bundle $s_x^*E|_x$, and $P_{s_x}:={\rm Aut}_{P_x}(s_x^*E|_x)\subset {\rm Aut}_G(E|_x)=G$ is canonically isomorphic to $P_x$ up to an inner automorphism of $P_x$. Thus, there is a canonical isomorphism $X^{\ast}(P_x)^{\vee}_{\mathbb{Q},+}\cong X^{\ast}(P_{s_x})^{\vee}_{\mathbb{Q},+}$, and we take
${a}_{s_x}\in X^{\ast}(P_{s_x})^{\vee}_{\mathbb{Q},+}$ to be the image of ${a}_x\in X^{\ast}(P_x)^{\vee}_{\mathbb{Q},+}$.
Clearly, the two parabolic subgroups $P_{s_x}$ and $Q_x$ of $G$ share a maximal torus $T_x$, and ${a}_{s_x}$ can be viewed as a rational $1$-PS of $T_x$. Hence, for any character $\chi_x$ of $Q_x$, the value $\langle \chi_x,{a}_{s_x} \rangle$ is well-defined.

\begin{defn}\label{definition semistability}
(1) Let $Q \subset G$ be a parabolic subgroup, and $E_Q$ be a reduction of $E$ to $Q$. For any nontrivial dominant character $\chi$ of $Q$, let $E_Q(\chi)$ be the associated line bundle on $C$, and
\[
{a}\text{-}\mathrm{deg}(E_Q(\chi)):=\deg(E_Q(\chi))+\sum_{x \in I} \langle \chi_x,{a}_{s_x} \rangle.
\]

(2) A \emph{parabolic $G$-bundle} $E$ of type $\{P_x\}_{x \in I}$ on $C$ is called  $\omega$-stable (resp.  $\omega$-semistable) if, for any parabolic subgroup $Q \subset G$, any $\text{reduction}$ $E_Q$ of $E$ to $Q$, and any nontrivial dominant character $\chi$ of $Q$,
\[
{a}\text{-}\mathrm{deg}(E_Q(\chi)) < 0\,\, (\text{resp}.\leq)
\] holds.
If there exist only $\omega$-stable \emph{parabolic $G$-bundles}, the parabolic data $\omega$ is called generic.
\end{defn}

Let $\theta$ be the highest root of $\mathfrak{g}={\rm Lie}(G)$, and $\mathfrak{h}\subset \mathfrak{g}$ be a Cartan subalgebra.
Set
\begin{equation}\label{equation fundamental alcove}\Phi_0:=\{h\in \mathfrak{h}\mid \,\theta(h)\leq 1, \alpha(h)\geq 0\,\, \forall\, \alpha\in \Delta_+\}\end{equation} (the fundamental alcove), and $\Phi^s_0\subset \Phi_0$ is defined to be the subset with $\theta(h)<1$,
where $\Delta_+$ consists of all simple positive roots.
\begin{rmk}\label{rmk nonzero} Let $\omega=(\{P_x\}_{x\in I}, {a})$ be the parabolic data. Then for any $x\in I$, $a_x\in X^{\ast}(P_x)^{\vee}_{\mathbb{Q},+}$ (see Definition \ref{defn 1.1}) and $\mathrm{Lie}(a_x)\in \mathfrak{h}$
must be nonzero.
\end{rmk}
\goodbreak
\begin{ex}\label{ex 2.7} When $G=\mathrm{SL}(V)$, ${a}'_x=d_x\cdot {a}_x:\mathbb{G}_m\to \mathrm{SL}(V)$ defines
$$V=\sum^{l_x+1}_{i=1}V^i, \quad V^i=\{\,v\in V\,|\,{a}'_x(z)(v)=z^{c_i}v,\,\,\forall\,\,z\in \mathbb{G}_m(\mathbb{C})=\mathbb{C}^*\},$$
where $c_1<c_2<\cdots<c_{l_x+1}$. Let $Q_i:=V^{l_x-i+2}\oplus\cdots\oplus V^{l_x+1}$ and
$$Q_{\bullet}({a}_x): V\twoheadrightarrow Q_{l_x}\twoheadrightarrow\cdots \twoheadrightarrow Q_i\twoheadrightarrow Q_{i-1}\twoheadrightarrow\cdots \twoheadrightarrow Q_1\to 0,$$ where $Q_i\twoheadrightarrow Q_{i-1}$ is the projection (its kernel is $V^{l_x-i+2}$). Let
$$ \mathrm{Flag}_{\vec r(x)}(V),\quad \vec{r}(x):=(r_1(x),r_2(x),...,r_{l_x}(x)),\quad r_i(x)={\rm dim}(Q_i)$$
be the variety of quotient flags of type $\vec{r}(x)$. Clearly, $Q_{\bullet}({a}_x)\in \mathrm{Flag}_{\vec r(x)}(V)$, and $P({a}_x)\subset \mathrm{SL}(V)$ is the stabilizer
of $Q_{\bullet}({a}_x)$. If ${a}_x\in X^*(P_x)^\vee_{\mathbb{Q},+}$, then $P_x=P({a}_x)$. In this case, set $b_i(x):=c_{i+1}-c_i$, and 
\begin{equation}\label{2.3}\chi_{{a}_x}=\frac{b_1(x)}{d_x}\omega_{r_1(x)}+\cdots +\frac{b_{l_x}(x)}{d_x}\omega_{r_{l_x}(x)},\end{equation}
where $\omega_i$ ($1\le i\le r-1$) are fundamental weights of ${\rm SL}(V)$. Let $k$ be a positive integer such that $d_i(x):=kb_i(x)/d_x$ ($1\le i\le l_x$) are
integers, and $a_1(x)=0$, $a_i(x)=d_1(x)+\cdots +d_{i-1}(x)$ for $2\leq i\leq l_x+1$. Then
\begin{equation}\label{2.4} L(k\chi_{{a}_x})=\bigotimes^{l_x}_{i=1}{\rm det}(\sQ_{x,i})^{d_i(x)}\,\,\,\text{on}\,\,\,\mathrm{SL}(V)/P_x=\mathrm{Flag}_{\vec r(x)}(V),\end{equation}
where $V\otimes\sO_{\mathrm{SL}(V)/P_x}\twoheadrightarrow \sQ_{x,l_x}\twoheadrightarrow\cdots \twoheadrightarrow \sQ_{x,1}\twoheadrightarrow 0$ is the universal flag. Moreover,  $\mathrm{Lie}({a}_x)\in \Phi^s_0$ if and only if $a_{l_x+1}(x)=d_1(x)+\cdots +d_{l_x}(x)< k$.
\end{ex}

\begin{thm}[see \cite{Kumar}]\label{thm existence of moduli space}
For a simple and simply connected algebraic group $G$ and $\omega=(\{P_x\}_{x\in I},{a})$ with $\mathrm{Lie}(a_x)\in \Phi^s_0$, there exists a coarse moduli space $\mathcal{M}_{G,\omega}$ of $S$-equivalent classes of $\omega$-semistable \emph{parabolic $G$-bundles} of type $\{P_x\}_{x \in I}$ with a fixed topological type on $C$. Moreover, $\mathcal{M}_{G,\omega}$ is a normal projective variety with rational singularities.
\end{thm}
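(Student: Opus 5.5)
Theorem \ref{thm existence of moduli space} is cited from \cite{Kumar}, so what is needed is an outline of the standard argument. I would proceed by reduction to a GIT quotient via the parahoric/loop-group (or Mehta--Seshadri--Bhosle--Ramanathan) dictionary.

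\textbf{Step 1: pass to a torsor under a finite group.} Since $\mathrm{Lie}(a_x)\in\Phi_0^s$, the weights are rational with $\theta(a_x)<1$. Choose an integer $N$ clearing the denominators of all $a_x$. Take a ramified Galois cover $p:\tilde C\to C$ with group $\Gamma$, branched over $I$ with ramification index $N$; it exists after possibly adding an auxiliary point. $\omega$-semistable parabolic $G$-bundles then correspond to $\Gamma$-equivariant $G$-bundles on $\tilde C$ whose local isotropy type at the points over $x$ is $\exp(2\pi i\,a_x)$. Under this correspondence $\omega$-(semi)stability matches $\Gamma$-(semi)stability. The condition $\theta(a_x)<1$ is exactly what makes the local type and the parabolic reduction correspond bijectively.

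\textbf{Step 2: GIT construction.} Fix a faithful representation $G\subset \mathrm{SL}(V)$. Bound the family of semistable objects: since semistability implies a Harder--Narasimhan-type bound, the associated vector bundles $E(V)$ form a bounded family. Then build a parameter scheme $R$ as follows: start from a Quot scheme of $\Gamma$-equivariant quotients, add the reduction to $G$ (sections of $E(V)$-frames modulo $G$, an affine morphism), and add the flag data $s_x\in E_x/P_x$. This $R$ is smooth and carries an action of $\mathrm{GL}(N')$. Linearize it using the determinant of cohomology twisted by $\bigotimes_x L(k\chi_{a_x})$, with the $\chi_{a_x}$ from (\ref{2.2}), and prove the Hilbert--Mumford criterion for this linearization. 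The main obstacle is identifying GIT-semistability with $\omega$-semistability in Definition \ref{definition semistability}. This goes by the usual computation of weights of $1$-PS, translated via Proposition \ref{prop2.3} into reductions $E_Q$ with dominant characters $\chi$, together with Ramanathan's trick of reducing to the vector-bundle criterion for $E(V)$ and using that semistable $G$-bundles induce semistable associated bundles.

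\textbf{Step 3: properties.} The quotient $\mathcal M_{G,\omega}=R^{ss}/\!\!/\mathrm{GL}$ is projective, and its closed points are $S$-equivalence classes. Normality and rational singularities follow from Boutot's theorem, since $R^{ss}$ is smooth (unobstructed, because $H^2$ vanishes on a curve). Alternatively, one can identify $\mathcal M_{G,\omega}$ with $\mathrm{Proj}$ of the ring of generalized theta functions on the smooth stack $\mathrm{Bun}_{G,\{P_x\}}$ and use the Kumar--Narasimhan--Ramanathan uniformization, under which sections are conformal blocks.
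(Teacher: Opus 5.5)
Your Step 1 and the use of Boutot's theorem agree with the paper. The paper also passes to equivariant bundles on a Galois cover (Theorem \ref{correspondence}), builds the parameter scheme $\mathcal{R}^{ss}_\tau(G)$ affine over the invariant part of a Quot scheme, and divides by $\mathfrak{G}$.

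\textbf{Missing properness argument.} The first genuine gap is the unsupported assertion in Step 3 that the quotient is projective. In your own set-up the reduction to $G$ is an affine morphism. So $R^{ss}/\!\!/\mathrm{GL}$ is only affine over the projective moduli space of $\Gamma$-semistable vector bundles on $\tilde C$ (the arrow in (\ref{equation affine})), hence a priori only quasi-projective. GIT gives nothing more here, since affine does not imply finite. Properness is a separate theorem. The paper gets it from the analytic compactness result \cite[Theorem 8.1.7]{B.C} when $g\geq 2$. In general it uses a semistable reduction theorem, proved by showing that the canonical reduction of an equivariant bundle is itself equivariant (an analogue of Behrend's theorem) and then arguing as in \cite[Theorem 4.4.1]{Heinloth}. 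Some such Langton-type input is indispensable; without it your outline does not produce a projective variety.

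\textbf{Step 2 conflates two constructions.} Once you work with $\Gamma$-equivariant quotients on $\tilde C$, there are no flags $s_x$ to add and no twist by $\bigotimes_x L(k\chi_{a_x})$. The weights are encoded in the $\Gamma$-action and the local type, and the linearization is the standard one. The comparisons you need, both in the $\Gamma$-equivariant category for the equivariant bundle $\widehat E$ on $\tilde C$, are:
\begin{itemize}
\item Seshadri's: $\Gamma$-semistability of $\widehat E(V)$ versus GIT-semistability;
\item Ramanathan's: $\Gamma$-semistability of $\widehat E$ versus that of $\widehat E(V)$.
\end{itemize}

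If instead you run the Hilbert--Mumford analysis for parabolic data on $C$ with the $\chi_{a_x}$-twisted linearization, you would reduce, via Ramanathan's trick, to the parabolic vector bundle $E(V)$. That reduction is available only when the induced weight $\Phi\circ a_x$ stays in the alcove of $\mathrm{SL}(V)$, i.e.\ for $\mathrm{Lie}(a_x)$ in the subset $\Phi_0^{\ast}\subset\Phi_0^s$ of (\ref{equation fundamental alcove subset}). This can fail even though $\theta(\mathrm{Lie}(a_x))<1$. For $\mathrm{Spin}_{2m}$ and the data $\omega_c$ of Section \ref{section fano}, $\theta(\tau_x)=\frac{2m-3}{2m-2}$ while $\theta_{\mathrm{sl}(V)}(\tau'_x)=1$. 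The paper needs Hecke modifications (Proposition \ref{semistable correspondence}) just to relate the two sides.

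Weight restrictions of this kind are the admissibility conditions under which \cite{BR} and \cite{Heinloth} work. Removing them for every $\mathrm{Lie}(a_x)\in\Phi_0^s$ is exactly why the paper goes through the equivariant picture. So drop the flags and the twist, and run the whole GIT argument on $\tilde C$.
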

The construction of the moduli space of semistable \emph{parabolic $G$-bundles} first appeared in \cite{BR}, and its
generalization to arbitrary characteristic was given in \cite{Heinloth}. Both constructions require the admissibility condition on the parabolic weight (see \cite[\S 4.1]{Heinloth}).
Here, following the method of Balaji and Seshadri (see \cite{B.C}) that uses the equivariant $G$-bundles on a cover of $C$, we sketch a construction of the moduli space for all ${a}$ with $\mathrm{Lie}({a}_x)\in \Phi^s_0$.

Let $\omega=(\{P_x\}_{x\in I},{a})$ be the given parabolic data, and $\tau_x=\mathrm{Lie}(a_x)\in \Phi_0^s$ be the rational element, i.e., $\tau_x=\frac{\bar \tau_x}{d_x}$ for some integer $d_x\geq 2$ and $\mathrm{Exp}(2\pi i\bar \tau_x)=1$. For a smooth $|I|$-pointed curve $C$ with $|I|\geq 1$ (and, when $C=\mathbb{P}^1$, assuming $|I|\geq 3$), by a result of Selberg (see \cite{Selberg}), there exists a smooth irreducible projective curve $\widehat C$ and a Galois cover $\pi:\widehat C\rightarrow C$ with a finite Galois group $A$, where $A$ acts freely on $\widehat{C}\setminus \pi^{-1}(\{x\}_{x\in I})$, and the isotropy subgroup $A_{\hat x}$ (for any $\hat x\in \pi^{-1}(x)$) is cyclic of order $d_x$.
An $A$-equivariant $G$-bundle (or an $(A,G)$-bundle for short) $\widehat{E}$ on $\widehat{C}$ is defined to be a principal $G$-bundle together with an action of $A$ on its total space by bundle automorphisms preserving the action of $G$, and it is called $A$-semistable (resp. $A$-stable) if (2) of Definition \ref{definition semistability} is satisfied for any parabolic subgroup $Q\subset G$ and any $A$-equivariant reduction $\widehat E_Q$ of $\widehat E$ to $Q$. Let  $\mathrm{Bun}_{G}^{A,\vec{\tau}}(\widehat{C})$ be the stack of $A$-equivariant $G$-bundles of local type $\vec \tau=\{\bar \tau_x\}_{x\in I}$ on $\widehat{C}$. Then it suffices to show the existence of the moduli space of $A$-semistable $G$-bundles on $\widehat{C}$ by the following result.

\begin{thm}[see Theorem 6.1.15 and Theorem 6.1.17 of \cite{Kumar}]\label{correspondence} For $\tau_x\in \Phi^s_0$ $(x\in I)$, there exists an isomorphism
\begin{equation}\label{equation isomorphism stacks}
\mathrm{Bun}_{G}^{A,\vec{\tau}}(\widehat{C})\cong \mathrm{Bun}_{G,\{P_x\}_{x\in I}}.\end{equation}
Under the isomorphism,  $A$-semistable $($resp. $A$-stable$)$ $G$-bundles on  $\widehat{C}$ correspond to $\omega$-semistable $($resp. $\omega$-stable$)$ {\em parabolic $G$-bundles} on $C$.
\end{thm}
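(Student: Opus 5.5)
The plan is to pass through the language of parahoric group schemes (Balaji–Seshadri) and to compare both sides of (\ref{equation isomorphism stacks}) with torsors under a Bruhat–Tits group scheme $\mathcal{G}_{\vec\tau}$ on $C$.

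\emph{Step 1: local model.} First I would classify $A$-equivariant $G$-bundles near a ramification point. Let $\hat x\in\pi^{-1}(x)$ and let $\hat D=\mathrm{Spec}\,\mathbb{C}[[z]]$ be the formal disc at $\hat x$. The generator $\gamma$ of $A_{\hat x}\cong\mathbb{Z}/d_x$ acts by $z\mapsto \zeta z$ with $\zeta=e^{2\pi i/d_x}$. Using that $A_{\hat x}$ is reductive and that $G$-bundles on $\hat D$ are trivial, I would show that every $(A_{\hat x},G)$-bundle on $\hat D$ of local type $\bar\tau_x$ is isomorphic to $\hat D\times G$ with the action $\gamma\cdot(z,g)=(\zeta z,\mathrm{Exp}(2\pi i\tau_x)g)$. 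Its automorphism group is the twisted invariant group $G(\mathbb{C}[[z]])^{A_{\hat x}}$. Conjugating by $z^{\bar\tau_x}$ identifies this group with the parahoric subgroup $\mathcal{P}_{\tau_x}\subset G(\mathbb{C}((t)))$, where $t=z^{d_x}$, attached to $\tau_x$. The hypothesis $\theta(\tau_x)<1$ enters exactly here. An affine root $\alpha+n$ is nonnegative on $\tau_x$ iff $n\ge 0$ and, when $n=0$, $\alpha(\tau_x)\ge0$. Hence $\mathcal{P}_{\tau_x}=\mathrm{ev}_{t=0}^{-1}(P(\tau_x))\subset G(\mathbb{C}[[t]])$, and $P(\tau_x)=P_x$ because $a_x\in X^*(P_x)^\vee_{\mathbb{Q},+}$.

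\emph{Step 2: globalize in families.} Given a family $\widehat{\mathcal{E}}_T$ over $\widehat C\times T$, I would define the $\mathcal{G}_{\vec\tau}$-torsor $(\pi_*\widehat{\mathcal{E}}_T)^A$ as the $A$-invariant Weil restriction. Away from $I$ it is just the descended $G$-bundle $\widehat{\mathcal{E}}_T/A$. Conversely, pulling back a torsor and normalizing gives an inverse, so
\[
\mathrm{Bun}_G^{A,\vec\tau}(\widehat C)\cong\mathrm{Bun}_{\mathcal{G}_{\vec\tau}}.
\]
By Step 1, $\mathcal{G}_{\vec\tau}$ is the dilatation of $G\times C$ along $P_x\subset G$ at each $x\in I$. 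Therefore a $\mathcal{G}_{\vec\tau}$-torsor is the same as a $G$-bundle $\mathcal{E}_T$ on $C\times T$ together with reductions of $\mathcal{E}_T|_{x\times T}$ to $P_x$, i.e.\ sections $S_{x,T}$. Since everything is functorial in $T$ and compatible with base change, this yields the isomorphism (\ref{equation isomorphism stacks}).

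\emph{Step 3: (semi)stability.} An $A$-equivariant reduction $\widehat E_Q$ descends to a reduction $E_Q$ over $C\setminus I$. This extends uniquely over $I$ by the valuative criterion, since $G/Q$ is proper, and the converse direction follows by pullback. So equivariant reductions and reductions of $E$ correspond bijectively. The key identity I would then prove is
\[
\deg\widehat E_Q(\chi)=|A|\Big(\deg E_Q(\chi)+\sum_{x\in I}\langle\chi_x,a_{s_x}\rangle\Big)=|A|\cdot a\text{-}\mathrm{deg}(E_Q(\chi)).
\]
To prove it, I would compare the invariant pushforward of the line bundle $\widehat E_Q(\chi)$ with $E_Q(\chi)$. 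They agree away from $I$, and in the local model of Step 1 the twist by $z^{\bar\tau_x}$ shifts $\chi$-weights by $\langle\chi,\bar\tau_x\rangle/d_x=\langle\chi_x,a_{s_x}\rangle$ at each point of $\pi^{-1}(x)$. Summing over the $|A|/d_x$ preimages gives the correction term. With this identity, the sign conditions in Definition~\ref{definition semistability} match on both sides.

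I expect the main obstacle to be this degree computation, which must be made independent of the relative position of $Q_x$ and $P_{s_x}$ (they share a torus $T_x$, but the reduction $E_Q$ need not be adapted to $P_x$). I would first handle the case where $\widehat E_Q$ is locally of the form $z^{\bar\tau_x}$-twisted with $\tau_x\in\mathrm{Lie}(T_x)$, using a Bruhat/Iwahori decomposition to reduce to this case, and then read off the local contribution from the torus weights.
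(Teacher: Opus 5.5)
Your outline is correct and is essentially the argument behind the results the paper cites from Kumar (Theorems 6.1.15, 6.1.17 and \S 6.1(17)): the local model at $\hat x$, conjugation by $z^{\bar\tau_x}$ identifying $G(\mathbb{C}[[z]])^{A_{\hat x}}$ with $ev_0^{-1}(P_x)$ precisely because $\theta(\tau_x)<1$, the bijection of reductions via properness of $G/Q$, and the identity $\deg\widehat E_Q(\chi)=|A|\cdot a\text{-}\mathrm{deg}(E_Q(\chi))$, which the paper quotes from Kumar in its properness discussion. Two points in your execution need repair. First, taken literally, $(\pi_*\widehat{\mathcal{E}}_T)^A$ has no sections near $x\in I$: since $\tau_x\neq 0$ lies in $\Phi_0^s$, we have $\mathrm{Exp}(2\pi i\tau_x)\neq 1$, so the generator of $A_{\hat x}$ has no fixed point on the fibre over $\hat x$. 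You should instead take $(\pi_*\mathrm{Isom}(\widehat{\mathcal{E}}_0\times T,\widehat{\mathcal{E}}_T))^A$ for a reference bundle $\widehat{\mathcal{E}}_0$ of local type $\vec\tau$, with $\mathcal{G}_{\vec\tau}=(\pi_*\mathrm{Aut}\,\widehat{\mathcal{E}}_0)^A$; locally this is the sheaf of equivariant isomorphisms with your Step 1 model, glued to $\widehat{\mathcal{E}}_T/A$ via $z^{\bar\tau_x}$. Second, in Step 3 the discrepancy between $\widehat E_Q(\chi)$ and $\pi^*E_Q(\chi)$ at each point of $\pi^{-1}(x)$ is the integer $d_x\langle\chi_x,a_{s_x}\rangle$, not $\langle\chi_x,a_{s_x}\rangle$. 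It is computed after moving $E_Q|_x$ and $s_x$ into a common torus, which again uses $\theta(\tau_x)<1$ to keep the unipotent part integral, and it is this integer that makes the $|A|/d_x$ preimages contribute $|A|\langle\chi_x,a_{s_x}\rangle$.
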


To construct the moduli space of $A$-semistable $G$-bundles on $\widehat{C}$, we fix an embedding $i:G\hookrightarrow \mathrm{SL}(V)\subset\mathrm{GL}(V)$ with $r:=\mathrm{dim}(V)$. For an $A$-semistable $G$-bundle $\widehat{E}$ on $\widehat{C}$, the vector bundle $\widehat E(V)$ is also semistable with trivial determinant.
Fix an $A$-stable finite subset $\{y_1,\ldots, y_b\}\subset \widehat{C}$ of points, let $\vec y=\vec y(d'):=d'\sum_{j=1}^b y_j$ and $\sO_{\widehat{C}}(\vec y)$ be the
corresponding $A$-equivariant line bundle. Choose $d'\gg 0$. Then, for any semistable vector bundle $E$ of rank $r$ and degree $0$ on $\widehat{C}$, we have
\begin{enumerate}
\item $H^1(\widehat{C},E(\vec y))=0$, and

\item $E(\vec y)$ is generated by its global sections.
\end{enumerate}
Let $P(z)=r(1-\hat{g})+rd'bz$, $\sO_{\widehat{C}}(-1)=\sO_{\widehat{C}}(\vec y)^{-1}$, and $\bold{Q}$ be the Quot scheme of quotients $(\sO_{\widehat{C}}\otimes \mathbb{C}^{\oplus P(N)})\otimes \sO_{\widehat{C}}(-N)\rightarrow \hat{F}\rightarrow 0$ of rank $r$ and degree $0$ on $\widehat{C}$. Then there is on $\widehat{C}\times \bold{Q}$ a universal quotient $$(\sO_{\widehat{C}\times \bold{Q}}\otimes \mathbb{C}^{\oplus P(N)})\otimes \sO_{\widehat{C}\times \bold{Q}}(-N)\rightarrow \hat{\mathcal{F}}\rightarrow 0.$$

Fix a representation $\hat{\tau}$ of $A$ on $\mathbb{C}^{\oplus P(N)}$, then the action of $A$ on $\widehat{C}$ induces an action of $A$ on $\sO_{\widehat{C}}\otimes \mathbb{C}^{\oplus P(N)}$. Hence $\sO_{\widehat{C}\times \bold{Q}}\otimes \mathbb{C}^{\oplus P(N)}$ is an $A$-equivariant vector bundle on $\widehat{C}\times \bold{Q}$, which induces a canonical action of $A$ on $\bold{Q}$ such that $\hat{\mathcal{F}}$ is an $A$-equivariant coherent sheaf. Let $\bold{Q}^A\subset \bold{Q}$ be the closed subscheme of $A$-invariant points, and $\mathcal{R}_{\hat{\tau}}^{ss}\subset \bold{Q}^A$ (resp. $\mathcal{R}_{\hat{\tau}}^{s}\subset \bold{Q}^A$) be the open subset consisting of $A$-semistable (resp. $A$-stable) vector bundles. Let $\mathcal{R}^{ss}(G)\to\mathcal{R}^{ss}_{\hat{\tau}}$ be the $\mathcal{R}^{ss}_{\hat{\tau}}$-scheme representing the following functor
 $${\small \Gamma(i,\hat{\mathcal{F}})(f:T\rightarrow \mathcal{R}^{ss}_{\hat{\tau}}):=\left\{\aligned &\text{the set of $A$-equivariant sections $\sigma$ of $\hat{\mathcal{F}}_f/G$}\\& \text{where $\hat{\mathcal{F}}_f:=(id_{\widehat{C}}\times f)^{\ast}(\hat{\mathcal{F}})$ on $\widehat{C}\times T$} \endaligned\right\},}$$
 and $\mathcal{R}^{ss}_{\tau}(G)\subset \mathcal{R}^{ss}(G)$ be the closed subset consisting of $G$-bundles of topological type $\tau$ (and of local type $\vec \tau$), where the action of $\mathfrak{G}:=\mathrm{GL}^A_{P(N)}$ ($A$-invariants under the conjugation action of $A$ on $\mathrm{GL}_{P(N)}$ induced from $\hat{\tau}$) on $\mathcal{R}^{ss}_{\hat{\tau}}$ canonically lifts to it. Then the $\mathfrak{G}$-invariant morphism $f_{\tau}:\mathcal{R}^{ss}_{\tau}(G)\rightarrow \mathcal{R}^{ss}_{\hat{\tau}}$
is affine, and the quotient
\begin{equation}\label{equation affine}
\mathcal{M}_{G}^{A,\vec{\tau}}(\widehat{C}):=\mathcal{R}^{ss}_{\tau}(G)//\mathfrak{G}\,\,(\rightarrow \mathcal{R}^{ss}_{\hat{\tau}}//\mathfrak{G}:=\mathcal{M}_{\mathrm{GL}(V)}^{A}(\hat C) )\end{equation}
is the moduli space of $A$-semistable $G$-bundles of fixed topological type $\tau$ (and local type $\vec \tau$) on $\widehat{C}$. Furthermore, the quotient $\mathcal{M}_{G}^{A,\vec{\tau}}(\widehat{C})$ is a normal variety with rational  singularities, since $\text{$\mathcal{R}^{ss}_{\tau}(G)$ is so}$.

Finally, it remains to show the properness of $\mathcal{M}_{G,\omega}\cong\mathcal{M}_{G}^{A,\hat{\tau}}(\widehat{C})$. When $g=g(C)\geq 2$, this follows from the compactness of $\mathcal{M}_{G}^{A,\hat{\tau}}(\widehat{C})$ in the analytic topology (see \cite[Theorem 8.1.7]{B.C}). In general, let $E$ be a \emph{parabolic $G$-bundle} on $C$, and $\widehat{E}$ be its corresponding $A$-equivariant $G$-bundle on $\widehat{C}$ (see Theorem \ref{correspondence}).
Then, for any parabolic subgroup $Q{\subset G}$, there is a bijective correspondence between sections of ${E/Q\rightarrow C}$ and $A$-equivariant sections of ${\widehat{E}/Q\rightarrow \widehat{C}}$, and $\mathrm{deg}(\widehat{E}_Q)=N{a}\text{-}\mathrm{deg}(E_Q)$ (see \cite[\S 6.1 (17)]{Kumar}),
where $N:=|A|$, $\widehat{E}_Q$ and $E_Q$ are the reductions. In the above correspondence, one can show the existence and uniqueness of the canonical reduction of a \emph{parabolic $G$-bundle} on $C$ from an $A$-equivariant $G$-bundle on $\widehat{C}$,
whose canonical filtration is $A$-equivariant. Hence we obtain an analogue of Behrend's theorem (see \cite[Theorem 4.3.2]{Heinloth}), which implies the semistable reduction theorem for $\text{\emph{parabolic $G$-bundles}}$ on $C$ by following  \cite[Theorem 4.4.1]{Heinloth}.

\subsection{Codimension estimates}\label{subsection codimension}
In this section, we prove two codimension estimates (see Proposition \ref{prop codim semistable and stable parabolic} and Proposition \ref{prop codimension regularly stable}) for the loci of unstable (resp. non-regularly stable) \emph{parabolic $G$-bundles} in the stack $\mathrm{Bun}_{G,\omega}$ of \emph{parabolic $G$-bundles}.

We begin with a well-known lemma (see \cite{LM}, for example). To simplify notation, we always assume that $G$ is simple and simply connected.
Let $Q\subset G$ be a parabolic subgroup, and $\mathrm{Bun}^{\geq h}_Q\subset \mathrm{Bun}_G$ be the closed substack consisting of principal $G$-bundles $E$ that admit reductions $E_Q$ of $E$ to $Q\subset G$ with $\mathrm{deg}(E_Q(\mathrm{det}_Q))\geq h$. 
\begin{lem}\label{lem codimension key lem}
The inequality
$$\mathrm{Codim}(\mathrm{Bun}_Q^{\geq h},\mathrm{Bun}_G)\geq \mathrm{dim}R_u( Q)(g-1)+h$$ holds,
where
$R_u(Q)$ is the unipotent radical of $Q$.
\end{lem}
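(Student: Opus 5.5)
The plan is the standard parametrization argument: the substack $\mathrm{Bun}_Q^{\geq h}$ is the image of the stack of $Q$-bundles of large degree, so it suffices to bound the dimension of that stack and then compare with $\dim \mathrm{Bun}_G=\dim G\,(g-1)$. Since $G$ is simple, $H^0(C,E(\mathfrak{g}))$ has no contribution from a center, and $\mathrm{Bun}_G$ is smooth of this dimension.

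\textbf{Reduction to $Q$-bundles.} Consider the natural morphism $\mathrm{Bun}_Q^{\geq h}(Q)\to \mathrm{Bun}_G$, where $\mathrm{Bun}_Q^{\geq h}(Q)$ is the open-and-closed union of components of $\mathrm{Bun}_Q$ with $\deg(E_Q(\det_Q))\geq h$. Its image is $\mathrm{Bun}^{\geq h}_Q$. Hence
$$\dim \mathrm{Bun}^{\geq h}_Q\leq \sup_{\deg E_Q(\det_Q)\geq h}\dim_{E_Q}\mathrm{Bun}_Q,$$
and it suffices to show $\dim_{E_Q}\mathrm{Bun}_Q\leq \dim G\,(g-1)-\dim R_u(Q)(g-1)-h$ near each such $E_Q$. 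If the union has infinitely many components, I would instead bound each component's image and use that, locally on finite-type opens of $\mathrm{Bun}_G$, only finitely many degrees occur, by boundedness of the Harder–Narasimhan type.

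\textbf{Dimension count.} At a point $E_Q$, the stack $\mathrm{Bun}_Q$ has dimension at most
$$-\chi(E_Q(\mathfrak{q}))+\dim H^1(E_Q(\mathfrak q))-\dim H^1(E_Q(\mathfrak q))=-\chi(E_Q(\mathfrak q)).$$
More precisely, $\dim T = h^1$ and $\dim\mathrm{Aut} = h^0$, and $h^2=0$ on a curve, so $\mathrm{Bun}_Q$ is smooth of dimension $-\chi(E_Q(\mathfrak q))$. By Riemann–Roch,
$$-\chi(E_Q(\mathfrak q))=\dim Q\,(g-1)-\deg E_Q(\mathfrak q).$$
Since $\det\mathfrak q$ as a $Q$-representation is the character $\det_Q$ (the paper's convention), we get $\deg E_Q(\mathfrak q)=\deg E_Q(\det_Q)\geq h$. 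If the sign convention instead identifies $\det_Q$ with $\det(\mathfrak g/\mathfrak q)^{-1}$, the result is the same, because $\det\mathfrak g$ is trivial and so $\det\mathfrak q=\det(\mathfrak g/\mathfrak q)^{-1}$.

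\textbf{Conclusion.} Using $\dim G-\dim Q=\dim R_u(Q)$, we obtain
$$\mathrm{Codim}\geq \dim G(g-1)-\dim Q(g-1)+h=\dim R_u(Q)(g-1)+h.$$

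The main obstacle is the first step. The fibres of $\mathrm{Bun}_Q\to\mathrm{Bun}_G$ could have positive dimension, but this only helps the inequality, since the image has dimension at most that of the source. So the real care is in matching the sign convention for $\det_Q$ against $\det\mathfrak q$, and in handling the infinitely many components. For the latter, I would note that the codimension is computed pointwise, and near any point only components whose image meets a fixed quasi-compact open matter.
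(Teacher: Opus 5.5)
Your proof is correct. The paper gives no proof of this lemma; it only calls it well known and cites a reference. What you wrote is the standard dimension count behind that citation:
\begin{itemize}
\item $\mathrm{Bun}_Q$ is smooth, of dimension $\dim Q\,(g-1)-\deg E_Q(\det_Q)$ along each component;
\item $\dim\mathrm{Bun}_G=\dim G\,(g-1)$;
\item $\dim G-\dim Q=\dim R_u(Q)$.
\end{itemize}

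Two points deserve tightening.

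First, the step ``the image has dimension at most that of the source'' is false for general morphisms of algebraic stacks. For example, $B\mathbb{G}_m\to\mathrm{Spec}\,\mathbb{C}$ goes from dimension $-1$ to $0$. It holds here because $\mathrm{Bun}_Q\to\mathrm{Bun}_G$ is representable: its fibre over $E$ is the scheme of sections of $E/Q\to C$. After base change to a smooth atlas of $\mathrm{Bun}_G$, you are comparing dimensions of algebraic spaces, where the inequality is valid. You should state this, not just remark that the fibres ``could have positive dimension''.

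Second, your hedge about the sign convention is vacuous. Since $\det\mathfrak g$ is trivial, $\det(\mathfrak g/\mathfrak q)^{-1}$ is literally the character $\det\mathfrak q$. The lemma does depend on the convention. With $\det_Q=\det(\mathfrak g/\mathfrak q)$ it already fails for $\mathrm{SL}_2$: every rank-two bundle has line subbundles of arbitrarily negative degree, so the locus would be all of $\mathrm{Bun}_G$. The right check is against the paper's definition of $X^*(P_x)^\vee_{\mathbb{Q},+}$. That definition requires $a_x(\det_P\otimes\det_{P_x}^{-1})<0$ whenever $P_x\subsetneq P$, which is consistent only with $\det_P=\det\mathrm{Lie}(P)$. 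That is the identification you used.
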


Let $Q\subset G$ be a standard parabolic subgroup, with respect to the given Borel subgroup $B\subset G$. For each $x\in I$, set a proper closed subset $$Z_{Q,\,x}:=\{s_x\in G/P_x\mid \, \langle\mathrm{det}_Q,{a}_{s_x}\rangle\geq 0\}\subsetneq G/P_x.$$
By a formula of $\langle\mathrm{det}_Q,{a}_{s_x}\rangle$ (see formula (3) on page 454 of \cite{Heinloth}) and a suitable choice of the parabolic weight ${a}=\{{a}_x\}_{x\in I}$, we may assume

\begin{itemize} \item [(1)] there exists a positive number $n_1$ such that for any $s_x\in G/P_x$ and any $Q$, we have $\langle \mathrm{det}_Q, {a}_{s_x}\rangle< {a}_{s_x}\rangle< n_1$;
\item [(2)] there exists a positive number $n_2$ such that if $\langle \mathrm{det}_Q, {a}_{s_x}\rangle<0$, then $\langle \mathrm{det}_Q, {a}_{s_x}\rangle<-n_2$
for any $s_x\in G/P_x$ and any $Q$.
\end{itemize}
Let $h$ be an integer such that $$\min_{Q\subset G}\{\,\mathrm{dim}R_u(Q)(g-1)+h\,\}=2.$$
The following result and its proof are variants of \cite[Proposition 4.2.2]{Heinloth}.

\begin{prop}\label{prop codim semistable and stable parabolic}
Assume that the weight ${a}$ satisfies assumptions $(1)$ and $(2)$. Then, if $|I|\geq \frac{2(n_1+n_2)+h}{n_2}$, we have
$$\mathrm{Codim}(\mathrm{Bun}_{G,\omega}\setminus \mathrm{Bun}^s_{G,\omega})\geq 2.$$
\end{prop}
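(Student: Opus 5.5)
The plan is to stratify $\mathrm{Bun}_{G,\omega}\setminus\mathrm{Bun}^s_{G,\omega}$ by the destabilizing data and bound the codimension of each stratum separately, in the spirit of \cite[Proposition 4.2.2]{Heinloth}. If a parabolic $G$-bundle $(E,\{s_x\})$ is not $\omega$-stable, there are a parabolic $Q$, a reduction $E_Q$, and a nontrivial dominant character $\chi$ of $Q$ with ${a}\text{-}\mathrm{deg}(E_Q(\chi))\geq 0$. Since the dominant characters of a maximal parabolic are positive multiples of $\mathrm{det}_Q$, and a non-maximal $Q$ can be replaced by a maximal one containing it, we may assume $Q$ is a standard maximal parabolic (finitely many) and $\chi=\mathrm{det}_Q$. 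Put $d=\deg E_Q(\mathrm{det}_Q)$. The condition becomes
$$d+\sum_{x\in I}\langle \mathrm{det}_Q,{a}_{s_x}\rangle\geq 0 .$$
Let $J\subset I$ be the set of points where $\langle\mathrm{det}_Q,{a}_{s_x}\rangle<0$. By assumptions (1) and (2), the sum is $<(|I|-|J|)n_1-|J|n_2$, so
$$d> |J|n_2-(|I|-|J|)n_1 .$$

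\emph{Strata and their dimensions.} For fixed $Q$, $d$, and a subset $K=I\setminus J$, we look at the locus of triples $(E_Q,\{s_x\})$ with $s_x\in Z_{Q,x}$ relative to the flag given by $E_Q$ at $x\in K$ (and $s_x$ arbitrary at $x\in J$). Its image in $\mathrm{Bun}_{G,\omega}$ is what we must bound. By Lemma \ref{thm forgetting map of stack}(1), $\mathrm{Bun}_{G,\omega}\to\mathrm{Bun}_G$ is locally trivial with fibre $\prod G/P_x$. The image therefore sits over $\mathrm{Bun}_Q^{\geq d}$, which has codimension $\geq \mathrm{dim}R_u(Q)(g-1)+d$ by Lemma \ref{lem codimension key lem}. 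In addition, at each $x\in K$ the section lies in the proper closed subset $Z_{Q,x}$ (relative to the $Q$-flag), which costs codimension at least $1$ per point. The total codimension is therefore at least
$$\mathrm{dim}R_u(Q)(g-1)+d+|K| .$$
Making this rigorous requires the point-condition codimension to survive projection from the stack of $Q$-reductions. Handling this is the main technical step. I would argue it fibrewise: over a fixed $E\in \mathrm{Bun}_Q^{\geq d}$ with a fixed reduction, the bad sections form $\prod_{x\in K} Z_{Q,x}\times\prod_{x\in J}G/P_x$. I would then use that the Heinloth-type estimate of Lemma \ref{lem codimension key lem} already accounts for the dimension of the family of reductions.

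\emph{Conclusion by cases.} Write $k=|K|$.

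If $k\geq$ roughly $|I|n_2/(n_1+n_2)$, the $|K|$ term alone is large. More precisely, combining $d\geq h$, or otherwise using $\mathrm{dim}R_u(Q)(g-1)+h\geq 2$ from the choice of $h$, gives codimension $\geq 2$.

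If $k$ is small, then $|J|=|I|-k$ is large, and the inequality $d>(|I|-k)n_2-kn_1$ together with $|I|\geq \frac{2(n_1+n_2)+h}{n_2}$ forces $d\geq h$ plus a surplus. Concretely, the bound is
$$\mathrm{dim}R_u(Q)(g-1)+d+k\;\geq\;\mathrm{dim}R_u(Q)(g-1)+|I|n_2-k(n_1+n_2)+k .$$
This is linear in $k$. Checking the endpoint values and using the hypothesis on $|I|$ gives $\geq 2$ in all cases. This elementary optimization is the step I would verify carefully, possibly splitting at $k\le 2$ versus $k\ge 2$.

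Finally, the strata are indexed by finitely many $(Q,K)$ and by $d$. For $d$ large, Lemma \ref{lem codimension key lem} gives codimension growing with $d$, so a countable union of substacks of codimension $\geq 2$ in each bounded-type open substack yields $\mathrm{Codim}(\mathrm{Bun}_{G,\omega}\setminus\mathrm{Bun}^s_{G,\omega})\geq 2$.
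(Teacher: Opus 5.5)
Your single-stratum bound is sound. The image in $\mathrm{Bun}_{G,\omega}$ of the stack of pairs $(E_Q,\{s_x\})$ with $\deg E_Q(\mathrm{det}_Q)=d$ and $s_x\in Z_{Q,x}$ for $x\in K$ has codimension at least $\mathrm{dim}R_u(Q)(g-1)+d+|K|$. This holds because the map has scheme-theoretic fibres (spaces of reductions), so families of reductions are correctly accounted for.

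The paper argues differently. It splits into two cases:
\begin{itemize}
\item $d\geq h$, handled by Lemma~\ref{lem codimension key lem} alone;
\item $d<h$, where the hypothesis on $|I|$ only serves to force $|K|\geq 2$; the paper then counts just the codimension $|K|$ inside the fibre $\prod_x G/P_x$ over the open substack $\mathrm{Bun}^{<h}_G$.
\end{itemize}

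\textbf{The gap is your final optimization.} For $d<h$ the term $\mathrm{dim}R_u(Q)(g-1)+d$ can be very negative, and the destabilizing inequality only gives $d>(|I|-k)n_2-kn_1$. At $k=|I|$ your bound is $\mathrm{dim}R_u(Q)(g-1)+|I|(1-n_1)$, and nothing in the hypotheses makes this $\geq 2$. Your ``large $k$'' case tacitly assumes $d\geq h$.

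\textbf{This cannot be repaired by better bookkeeping.} With only (1), (2) and the stated bound on $|I|$, the statement is false in genus $0$, and your bound is sharp there. Take $G=\mathrm{SL}_2$, $C=\mathbb{P}^1$, $|I|=2n+1$, $P_x=B$, and $\mathrm{Lie}(a_x)=\mathrm{diag}(t,-t)$ with $\frac{n}{2n+1}<t<\frac12$.

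\begin{itemize}
\item For $E=\mathcal{O}^2$ and arbitrary flags $s_x$, the $2n+1$ linear conditions $(p(x),q(x))\in s_x$ on $(p,q)\in H^0(\mathcal{O}(n))^{2}$ have a nonzero solution.
\item Its saturation is a line subbundle $L$ of parabolic degree at least $-n+(2n+1)t>0$. Indeed, a common zero raises $\deg L$ by $1$, while losing $L_x=s_x$ at a marked point costs only $2t<1$.
\item Hence the whole open substack over $E\cong\mathcal{O}^2$ is unstable, and the non-stable locus has codimension $0$.
\item Yet $n_1,n_2$ may be taken arbitrarily close to $2t$, so the threshold $\frac{2(n_1+n_2)+h}{n_2}$ stays bounded as $t\to\frac12$. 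Concretely, with $\mathrm{det}_Q=\alpha$ and $h=3$, the choice $n=4$, $t=0.49$ satisfies the hypothesis.
\end{itemize}

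The paper's fibrewise count misses this because it treats $E_Q$, and hence $Z_{Q,x}$, as fixed. But $\mathcal{O}^2$ carries a $(2n+1)$-dimensional family of reductions of degree $-n$, which is exactly the issue you flagged.

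What survives is a refinement of your route. Let $c_x$ be the codimension of the $Q$-orbit of $s_x$ in $G/P_x$. One checks, using $\tau_x\neq 0$ and $\theta(\tau_x)<1$, that $c_x>\langle\mathrm{det}_Q,a_{s_x}\rangle$. Eliminating $d$ via the destabilizing inequality then gives codimension at least $\mathrm{dim}R_u(Q)(g-1)+\sum_{x\in I}\big(c_x-\langle\mathrm{det}_Q,a_{s_x}\rangle\big)$. This is $\geq 2$ once $|I|$ is large compared with $1/n_2$ and $1/(1-\theta(\tau_x))$, a condition the stated bound in terms of $n_1,n_2,h$ does not capture.
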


\begin{proof} For a standard $Q\subset G$, let $\mathrm{Bun}^{<h}_Q:=\mathrm{Bun}_G\setminus \mathrm{Bun}^{\geq h}_Q$,
$$\mathrm{Bun}^{<h}_G:=\bigcup_{Q\subset G}\mathrm{Bun}^{<h}_Q, \text{and} \,\, \mathrm{Bun}_{G,\omega}^{< h}:=\pi^{-1}(\mathrm{Bun}^{<h}_G)\subset \mathrm{Bun}_{G,\omega},$$
where $\pi: \mathrm{Bun}_{G,\omega}\to \mathrm{Bun}_{G}$. Then, by Lemma \ref{lem codimension key lem}, it suffices to prove
$$\mathrm{Codim}(\mathrm{Bun}_{G,\omega}^{< h}\setminus \mathrm{Bun}^s_{G,\omega})\geq 2.$$

Let $(E,\{s_x\}_{x\in I})\in \mathrm{Bun}_{G,\omega}^{< h}$.  If
${a}\text{-}\mathrm{deg}(E_Q(\mathrm{det}_Q))\geq 0$
for some reduction $E_Q$ of $E$ to $Q\subset G$, the subset
$$J=\{x\in  I\mid s_x\in Z_{Q,x}\subset G/P_x\,\}$$
has at least two points. Indeed, when $|J|<2$,  ${a}\text{-}\mathrm{deg}(E_Q(\mathrm{det}_Q))$
$$\aligned =\mathrm{deg}(E&_Q(\mathrm{det}_Q))+\sum_{x\in J}\langle \mathrm{det}_Q,{a}_{s_x}\rangle+\sum_{x\in I\setminus J}\langle\mathrm{det}_Q,{a}_{s_x}\rangle \\ < & h+n_1\cdot |J|-n_2\cdot (|I|-|J|)<0.\endaligned$$
Thus, the fibers of $X_Q:=$
$$\{\,(E,\{s_x\}_{x\in I})\in \mathrm{Bun}_{G,\omega}^{\leq h}\mid {a}\text{-}\mathrm{deg}(E_Q(\mathrm{det}_Q))\geq 0\,\}\to \mathrm{Bun}^{<h}_G$$
are contained in the closed subset
$$\prod_{x\in J}Z_{Q,x}\times \prod_{x\in I\setminus J} G/P_x\subset \prod_{x\in I}G/P_x,$$
which has codimension at least $|J|\ge 2$. Finally, we are done by running through all standard parabolic subgroups of $G$.
\end{proof}
\goodbreak

\begin{rmk}\label{rmk codim stable principal} Although the proof shows that we may make the codimension
as large as we wish by taking $|I|\gg 0$ (as pointed out in $\text{\cite[Remark 4.2.3]{Heinloth}}$), we still
expect a lower bound of $$\mathrm{Codim}(\mathrm{Bun}_{G,\omega}\setminus \mathrm{Bun}^s_{G,\omega})$$ in terms of $|I|$ and the genus $g$ of $C$ (as in the case $G={\rm GL}_r,\, {\rm SL}_r$, see \cite{Su1},\, \cite{Su3}). Note that there already exist some lower bounds in terms of $g$, which are complementary to the above estimates:

(1) By \cite[Theorem II.6]{F} and the result in Section 3.1 of \cite{LM}, we have
$$\mathrm{Codim}(\mathrm{Bun}_G\setminus \mathrm{Bun}_G^{s}) \geq 2$$ whenever $g \geq 2$, except for $g=2$ and $G=\mathrm{SL}_2$.

(2) When $g$ is large enough, one can expect the required codimension estimates for arbitrary parabolic data $\omega$. For example, we have
$$\aligned &
\mathrm{Codim}(\mathrm{Bun}_{G,\omega}\setminus \mathrm{Bun}_{G,\omega}^{ss})\geq 2 \,\,\text{when}\,g\geq 2; \\&
\mathrm{Codim}(\mathrm{Bun}_{G,\omega}\setminus \mathrm{Bun}_{G,\omega}^{s})\geq 2
\,\, \text{when}\, g\geq 3 .\endaligned$$
We take \cite[Proposition 6.3.1]{PP} as a reference, where a more general case, namely Bruhat-Tits group scheme torsors, is considered.\end{rmk}

Recall that a stable \emph{parabolic $G$-bundle $E$} is called regularly stable, if
$Z(G) \to \mathrm{Aut}(E)$
(automorphisms as a \emph{parabolic $G$-bundle}) is an isomorphism.
For the {principal $G$-bundle}, the closed subset of $\mathrm{Bun}_G^{s}$ parametrizing stable bundles that are not regularly stable has codimension at least two 
when $g\geq 2$, except for $g=2$ and $G=\mathrm{SL}_2$ (see \cite[Theorem II.6]{F} and \cite[Proposition 11.6]{L} for details).
For further applications, we prove its parabolic analogue under certain conditions.

\begin{prop}\label{prop codimension regularly stable} Let $\omega=(\{B_x\}_{x\in I},{a})$ where  $B_x\subset G$ $($$x\in I$$)$ are Borel subgroups. Assume that there exists an $\omega$-stable {\em parabolic $G$-bundle}.
Then we have
$$
\mathrm{Codim}(\mathrm{Bun}^s_{G,\omega}\setminus \mathrm{Bun}^{rs}_{G,\omega})\geq 2g-3+|I|,
$$ where $\mathrm{Bun}_{G,\omega}^{rs}\subset \mathrm{Bun}_{G,\omega}$ denotes the open substack consisting of regularly stable {\em parabolic $G$-bundles}.
\end{prop}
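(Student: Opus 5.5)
We follow the strategy of \cite[Proposition 11.6]{L} and \cite[Theorem II.6]{F}, adapted to the parabolic setting. Let $(E,\{s_x\}_{x\in I})$ be $\omega$-stable but not regularly stable. Stable parabolic bundles have finite automorphism groups modulo $Z(G)$, so $\mathrm{Aut}(E)/Z(G)$ is finite and nontrivial. Hence there is an automorphism $\sigma\in\mathrm{Aut}(E)\setminus Z(G)$ of finite order, and after passing to a power we may take its order modulo $Z(G)$ to be prime. Its values $\sigma(y)\in \mathrm{Aut}(E|_y)\cong G$ are semisimple and lie in a single conjugacy class $[s]$, since the characteristic polynomial is locally constant on $C$. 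The centralizer $H:=Z_G(s)$ is a proper reductive subgroup of maximal rank, and $E$ admits a reduction $E_H$ to $H$. Moreover, each flag $s_x\in G/B_x$ is fixed by $\sigma(x)$, so it lies in the fixed locus $(G/B)^{s}$. That fixed locus is a finite disjoint union of $H$-orbits $H/(H\cap B^w)$, indexed by $w\in W_H\backslash W$.

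This lets us stratify the bad locus. For each conjugacy class $[s]$ (finitely many up to $Z(G)$, since $s$ has bounded order) and each choice of components $w=(w_x)_{x\in I}$, the bad bundles come from the stack $\mathrm{Bun}_{H,\{H\cap B^{w_x}\}}$ of parabolic $H$-bundles. We then compare dimensions using Riemann--Roch for the adjoint bundles together with the fibre structure of Lemma \ref{thm forgetting map of stack} (1),(2):
\[
\dim \mathrm{Bun}_{G,\{B_x\}}=(g-1)\dim G+|I|\dim G/B,
\]
\[
\dim \mathrm{Bun}_{H,\{H\cap B^{w_x}\}}=(g-1)\dim H+|I|\dim H/(H\cap B).
\]
We must also account for the choice of connected component of $H$-bundles; non-simply-connected $H$ only contributes discrete data, so it does not affect dimensions. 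The codimension of the image is therefore at least
\[
(g-1)(\dim G-\dim H)+|I|\bigl(\dim G/B-\dim H/(H\cap B)\bigr)-\epsilon,
\]
where $\epsilon$ corrects for the automorphisms. The correction comes from two sources. First, the map $\mathrm{Bun}_{H}\to\mathrm{Bun}_G$ has stabilizer defects governed by $\dim Z(H)$ versus $\dim Z(G)=0$. Second, the generic fibre dimension, i.e. the number of $H$-reductions of a given $E$, is bounded by $h^0$ of the relevant summand $\mathfrak{g}/\mathfrak{h}$ twisted appropriately. Stability of $E$ should kill these global sections.

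Since $\mathfrak{g}/\mathfrak{h}$ is a sum of root spaces, $\dim G-\dim H\ge 2$, and similarly $\dim G/B-\dim H/(H\cap B)=\tfrac12(\dim G-\dim H)\ge 1$. If $\dim Z(H)\le 1$ and the tangent comparison is exact, this gives codimension at least $2(g-1)+|I|-1=2g-3+|I|$, which matches the statement. The case $Z(H)$ of dimension one occurs, e.g., when $H$ is a Levi subgroup; then $\epsilon=1$, which explains the $-1$.

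The main obstacle is the precise bound on $\epsilon$ when $Z(H)^0$ has larger dimension. In that case the fixed $H$-reduction carries line bundles $E_H(\chi)$ for characters $\chi$ of $H$, and we must show that stability forces the deformation count to be controlled. Our first attempt is to use the stability hypothesis (and the existence of a stable bundle) to show that $H^0(C,E_H(\mathfrak{g}/\mathfrak{h})\text{ with parabolic conditions})=0$. We would then compute the normal space via $H^1$ of $E_H(\mathfrak{g}/\mathfrak{h})$ restricted by the flags, whose dimension by Riemann--Roch is $(g-1)\dim(\mathfrak{g}/\mathfrak{h})+|I|\cdot\tfrac12\dim(\mathfrak g/\mathfrak h)$, minus the degree contributions. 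Those degree contributions vanish because $\mathfrak{g}/\mathfrak{h}$ is self-dual. This yields the bound directly at the level of tangent spaces, and we would reduce the stratification to this infinitesimal estimate.
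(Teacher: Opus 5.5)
Your stratification, by the centralizer $H$ of a non-central automorphism and by the components $H\dot w_xB/B$ of the fixed locus $(G/B)^s$, is a sound refinement of the paper's argument. As written, however, it does not prove the inequality.

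\textbf{The gap.} Everything hinges on the correction term $\epsilon$. You control it only conditionally (``if $\dim Z(H)\le 1$ and the tangent comparison is exact'') and you explicitly leave it open when $\dim Z(H)^0\ge 2$. The suggested repair, an $H^0$-vanishing for $E_H(\mathfrak{g}/\mathfrak{h})$ followed by an $H^1$ count of a normal space, is not carried out, and it is not needed.

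The intuition behind $\epsilon$ is also backwards. Stack dimensions already subtract automorphism groups. Positive-dimensional fibres (many $H$-reductions of one $E$) make the image \emph{smaller}, so they increase the codimension rather than decrease it. A $\dim Z(H)$ loss only appears when one counts in a coarse moduli space. Your $-1$, with $\epsilon=1$ for Levi $H$, seems reverse-engineered from the $2g-3$ in the statement.

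\textbf{The two missing observations.} First, the morphism $\mathrm{Bun}_{H,\{H\cap {}^{w_x}B\}}\to\mathrm{Bun}_{G,\{B_x\}}$ is representable, because automorphisms of $E_H$ with its flags inject into those of $(E,\{s_x\})$. So its fibres are algebraic spaces of dimension $\ge 0$, and the image has dimension at most $(g-1)\dim H+|I|\dim H/B_H$. In other words, $\epsilon=0$.

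Second, on the stable locus the case $\dim Z(H)>0$ never occurs. We have $Z(H)\subset T$, and $Z(H)$ fixes every point of $(G/B)^s=\bigsqcup_w H\dot wB/B$, since $zh\dot wB=h\dot w(\dot w^{-1}z\dot w)B=h\dot wB$. Hence right multiplication by $Z(H)$ on $E_H$ gives automorphisms of $(E,\{s_x\})$. Finiteness of automorphism groups of stable objects, which you already invoke, then forces $H$ to be semisimple. Consequently $\pi_1(H)$ is finite, and there are only finitely many strata if you index them by conjugacy classes of such $H$ (not by $s$) and by $w\in(W_H\backslash W)^{I}$.

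With these two remarks your count gives
\[
\mathrm{Codim}\ge(2g-2+|I|)\cdot\tfrac12(\dim G-\dim H).
\]
This implies the statement, and in fact gives $2g-2+|I|$ whenever that is nonnegative.

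\textbf{Comparison with the paper.} The paper works only with the underlying bundle. It places $E$ in the image of $\mathrm{Bun}_L$, with $L=C(g_\sigma)$, which has codimension $\ge 2g-3$ by Laszlo's lemma. It then adds $1$ for each $x\in I$, because $s_x$ lies in the proper closed fixed locus of $g_\sigma$ on $G/B_x$. Your parabolic-stack version makes that second step precise, since the fixed locus moves with $\sigma$. It also gives the exact per-point contribution $\tfrac12(\dim G-\dim H)$.
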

\begin{proof}
Our proof follows a similar argument in \cite[\S 11]{L}. In fact, we will prove that for any
$(E,\{s_x\}_{x\in I})\in \mathrm{Bun}^s_{G,\omega}\setminus \mathrm{Bun}^{rs}_{G,\omega}$,
$E$ must lie in a fixed closed substack of codimension at least $2g-3$, and $s_x$ must lie in a fixed proper closed subset
of $G/B_x$ for each $x\in I$. 

First, let $\sigma\in {\rm Aut}(E, \{s_x\}_{x\in I})\setminus Z(G)$, and $g_{\sigma}\in G$ such that
$\sigma(e)=e\cdot g_{\sigma}$ for ${e\in E}$.
Then $g_{\sigma}$ has finite order, and its reductive centralizer ${L:=C(g_{\sigma})\subsetneq G}$. By \cite[Lemma 11.1]{L}, $E$ has an $L$-structure, and it lies in the image of the natural morphism $\mathrm{Bun}_{L}\rightarrow \mathrm{Bun}_G$,
with $$\mathrm{dim}\,\mathrm{Bun}_G-\mathrm{dim}\,\mathrm{Bun}_L\geq (2g-3)\mathrm{dim}\,Z(L)\geq 2g-3.$$

To continue, $s_x\in E|_x/B_x\cong G/B_x$ is $\sigma$-invariant, and it lies in the locus of fixed points of $G/B_x=\{\,\text{all Borel subgroups $B\subset G$}\,\}$
under the action of $g_{\sigma}$ by the conjugation $B\mapsto g_{\sigma}B g_{\sigma}^{-1}$. Since
$$g_{\sigma}\notin Z(G)=\bigcap_{B\subset G}B,$$
there is one Borel subgroup $B$ such that $g_{\sigma}\notin B=N_G(B)$ (i.e., $g_{\sigma}B g_{\sigma}^{-1}\neq B$). Hence the locus of fixed points (under the action of $g_{\sigma}$) is a proper closed subvariety of $G/B_x$, and the non-regularly stable locus of $\mathrm{Bun}^s_{G,\omega}$ has codimension at least $2g-3+|I|$. We are done.
\end{proof}

\begin{rmk}\label{codim regularly stable principal}
When $g\geq 2$, we have $$\mathrm{Codim}(\mathrm{Bun}^s_{G,\omega}\setminus \mathrm{Bun}^{rs}_{G,\omega}) \geq 2$$ for arbitrary $\{P_x\}_{x\in I}$, except for $g=2$, $G=\mathrm{SL}_2$, and $|I|=0$.
\end{rmk}
\subsection{Algebra of conformal blocks} 
Now, we briefly outline the construction of the {\em algebra of conformal blocks} (see \cite[\S~3.1]{MR2433154} for details). 

First, we construct the space of conformal blocks using the representation theory. Fix a simple Lie algebra $\mathfrak{g}$.
For each dominant integra weight $\lambda$, let $V_{\lambda}$ denote the corresponding irreducible $\mathfrak{g}$-representation. Fix an integer $\ell\geq 0$ satisfying $(\theta, \lambda)\leq \ell$. Then there is an integrable highest weight module $H_{\ell, \lambda}\supset V_{\lambda}$ of affine Kac-Moody algebra $\hat{\mathfrak{g}}$ of level $\ell$ associated to $\mathfrak{g}$. 
Here, $\theta$ is the highest root, and $(-,-)$ is the normalized Killing form.
For a non-negative integer $\ell$ and dominant integral weights $\alpha=\{\alpha_x\}_{x\in I}$ with $(\theta, \alpha_x)\leq \ell$, set $H_{\ell,\alpha}:=\otimes_{x\in I} H_{\ell, \alpha_x}$. Fix $X = (C, \{x\}_{x\in I})\in \overline{\mathcal{M}}_{g,n}$. Define
\[
H^0(C, \mathcal{O}_C(\ast \sum_{x\in I}x):=
\varinjlim\limits_{m} H^0(C, \mathcal{O}_C(m\sum_{x\in I}x))
\]
and $\hat{\mathfrak{g}}(X):=\mathfrak{g}\otimes H^0(C, \mathcal{O}_C(\ast\sum_{x\in I}x))$. This space has a natural Lie algebra structure, and it acts on $H_{\ell, \alpha}$. Define 
$
\mathbb{V}_{X,\mathfrak{g},\ell,\alpha}:=H_{\ell,
\alpha}/\hat{\mathfrak{g}}(X)H_{\ell, \alpha}
$
and its dual
\[
\mathbb{V}_{X,\mathfrak{g},\ell, \alpha}^{\dag}:=\mathrm{Hom}_{\mathbb{C}}(\mathbb{V}_{X,\mathfrak{g},\ell,\alpha},\mathbb{C})
=\mathrm{Hom}_{\hat{\mathfrak{g}}(X)}(H_{\ell,\alpha},\mathbb{C}),
\] which is called the space of conformal blocks.

If the $n$-pointed stable curve $X=(C,\{x\}_{x\in I})$ is singular, then the following Factorization Theorem holds.
\begin{thm}[see Theorem 3.19 of \cite{MR2433154}]\label{factorization theorem} $(1)$ Let $\overline{\mathcal{M}}_{g-1,n+2}\xrightarrow{f} \overline{\mathcal{M}}_{{g,n}}$ be the gluing map of the last two marked points $\{x_1,x_2\}$. We have $$f^{\ast}\big(\mathbb{V}^{\dagger}_{X,\mathfrak{g},\ell,\alpha}\big)\cong \bigoplus_{\mu}\mathbb{V}^{\dagger}_{X^{+},\mathfrak{g},\ell,\alpha\cup \{\mu,\mu^{\ast}\},}$$
where $X=f(X')$, $X^+=(C',I'\cup \{x_1,x_2\})$ with $X'=(C',I')$, $\mu$, $\mu^{\ast}$ are the weights at $\{x_1,x_2\}$, and the sum runs through all the $\mu$ satisfying $(\mu,\theta)\leq \ell$.

$(2)$ Let $I_1\sqcup I_2=I$ be a partition, and $\overline{\mathcal{M}}_{g_1,|I_1|+1}\times \overline{\mathcal{M}}_{g_2,|I_2|+1}\xrightarrow{h} \overline{\mathcal{M}}_{g,|I|}$ be the gluing map of the last two marked points $\{x_1,x_2\}$. We have $$h^{\ast}\big(\mathbb{V}^{\dagger}_{X,\mathfrak{g},\ell,\alpha}\big)\cong \bigoplus_{\mu}\left(\mathbb{V}^{\dagger}_{X_1^+,\mathfrak{g},\ell,\{\alpha_x\}_{x\in I_1}\cup \{\mu\}}\otimes \mathbb{V}^{\dagger}_{X_2^+,\mathfrak{g},\ell,\{\alpha_x\}_{x\in I_2}\cup \{\mu^{\ast}\}}\right),$$
where $X=h((X_1,X_2))$ and $X_i^+=(C_i,I_i\cup \{x_i\})$ with $X_i=(C_i, I_i)$
for $i=1,2$.
\end{thm}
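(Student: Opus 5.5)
The plan is to reduce both statements to a comparison between the space of conformal blocks on the nodal curve and those on its partial normalization. Let $p\in C$ be the node produced by the gluing map, and let $\nu:C'\to C$ be the normalization at $p$, with $\nu^{-1}(p)=\{x_1,x_2\}$. The starting observation is that
$$H^0\big(C,\sO_C(\ast\textstyle\sum_{x\in I}x)\big)=\big\{\,f\in H^0\big(C',\sO_{C'}(\ast\textstyle\sum_{x\in I}x)\big)\ \big|\ f(x_1)=f(x_2)\,\big\}.$$
Hence $\hat{\mathfrak g}(X)$ is the subalgebra of $\mathfrak g\otimes H^0(C',\sO_{C'}(\ast\sum_{x\in I} x))$ consisting of sections whose values at $x_1$ and $x_2$ coincide. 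Using this description I would proceed in three steps.

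\emph{Step 1 (propagation of vacua).} Attach to $x_1,x_2$ finite dimensional $\mathfrak g$-modules, and show that
$$\mathbb V^\dagger_{X,\mathfrak g,\ell,\alpha}\cong \mathrm{Hom}_{\mathfrak g\otimes H^0(C',\sO(\ast I))}\Big(H_{\ell,\alpha}\otimes \mathcal A,\ \mathbb C\Big).$$
Here $\mathcal A$ is the $\mathfrak g\oplus\mathfrak g$-module $\mathbb C[G]\cong\bigoplus_{\mu}V_\mu\otimes V_{\mu^\ast}$. The condition $f(x_1)=f(x_2)$ corresponds exactly to taking invariants under the diagonal $\mathfrak g$ acting on $V_\mu\otimes V_{\mu^*}$ through $f(x_1)\oplus f(x_2)$. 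More concretely, I would show that a $\hat{\mathfrak g}(X)$-invariant functional on $H_{\ell,\alpha}$ extends uniquely, by the standard propagation argument, to a functional on $H_{\ell,\alpha}$ tensored with the module induced at $x_1,x_2$, and that this induced module is $\bigoplus_\mu V_\mu\otimes V_{\mu^*}$.

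\emph{Step 2 (level truncation).} Show that only the $\mu$ with $(\mu,\theta)\le\ell$ contribute. The functional must be annihilated by the extra integrability relation: $(e_\theta\otimes t^{-1})^{\ell+1}$ kills the vacuum, and this forces the $\mathfrak g$-modules at $x_1,x_2$ to be level-$\ell$ integrable. Then, using propagation of vacua in the reverse direction on $C'$, replace $V_\mu$ (resp. $V_{\mu^*}$) at $x_1$ (resp. $x_2$) by $H_{\ell,\mu}$ (resp. $H_{\ell,\mu^*}$). This yields $\bigoplus_\mu \mathbb V^\dagger_{X^+,\mathfrak g,\ell,\alpha\cup\{\mu,\mu^*\}}$ and proves (1).

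\emph{Step 3 (families and part (2)).} For (2), $C'$ is the disjoint union $C_1\sqcup C_2$, so $H^0(C',\sO(\ast I))$ is the direct sum of the corresponding rings on $C_1$ and $C_2$. The coinvariants of a tensor product of modules on a disjoint union are then the tensor product of the coinvariants on the components, and the same decomposition gives the stated formula. To pass from the fibrewise statement to the pullback of bundles $f^*,h^*$, I would run the argument over the base using the sheaf version of coinvariants on the boundary stratum. This is legitimate because all ranks are locally constant there, which follows from the Verlinde formula or from local freeness of the TUY sheaf.

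The main obstacle is Step 1. One must identify the gluing module precisely as $\bigoplus_\mu V_\mu\otimes V_{\mu^*}$ and check that the diagonal invariance matches the condition $f(x_1)=f(x_2)$. Nilpotent parts such as $f-f(x_1)$ vanishing to higher order must act correctly on the highest weight parts. I would first try Ueno's approach: filter $H^0(C',\sO(\ast I))$ by order of vanishing at $x_1,x_2$ and use the Peter--Weyl decomposition on the associated graded.
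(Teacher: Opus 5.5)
The paper does not prove this statement; it quotes the Tsuchiya--Ueno--Yamada factorization theorem from the literature. Your outline follows the standard route. $\hat{\mathfrak g}(X)$ consists of sections of $\mathfrak g\otimes\sO_{C'}(\ast I)$ with equal values at $x_1,x_2$, and it contains the ideal $\mathfrak n=\mathfrak g\otimes H^0(C',\sO_{C'}(\ast I)(-x_1-x_2))$. Then $\mathbb V^\dagger_{X,\mathfrak g,\ell,\alpha}$ is the space of $\Delta\mathfrak g$-invariant functionals on the $\mathfrak g\oplus\mathfrak g$-module $W=H_{\ell,\alpha}/\mathfrak n H_{\ell,\alpha}$.

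The genuine gap is Step 2, which is where the content of the theorem lies. You must show that every functional $\psi$ on $H_{\ell,\alpha}\otimes V_\mu\otimes V_\nu$ invariant under $\mathfrak g\otimes H^0(C',\sO_{C'}(\ast I))$ vanishes unless $(\mu,\theta)\le\ell$ and $(\nu,\theta)\le\ell$. In the remaining case you must show that its formal extension to generalized Verma modules at $x_1,x_2$ kills $(e_\theta\otimes t^{-1})^{\ell-(\mu,\theta)+1}v_\mu$ (and likewise at $x_2$), so that it factors through $H_{\ell,\mu}\otimes H_{\ell,\nu}$.

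The slogan that $(e_\theta\otimes t^{-1})^{\ell+1}$ kills the vacuum does not give this. There is no vacuum module in the picture. Even if you insert one by propagation of vacua, a global function with poles only there generally has a pole of order $>1$, and then $(e_\theta\otimes f)^{\ell+1}$ need not kill the vacuum: for $\mathfrak{sl}_2$ at level $1$ already $(e\otimes t^{-2})^2v_0\neq 0$. What invariance actually gives, by moving $e_\theta\otimes f$ with a simple pole at $x_1$ over to $H_{\ell,\alpha}$, is $\psi(a\otimes(e_\theta\otimes t^{-1})^j v_\mu\otimes w)=0$ only for $j\ge N(a)$. The bound depends on $a$ because $e_\theta\otimes f$ is merely locally nilpotent on $H_{\ell,\alpha}$.

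The missing idea is to transport the integrability of the modules $H_{\ell,\alpha_x}$, $x\in I$, to $x_1$. Concretely, prove that the image of the generalized Verma module $M_\mu$ in the dual of $H_{\ell,\alpha}\otimes V_\nu$ is an \emph{integrable} highest weight $\hat{\mathfrak g}$-module.
\begin{itemize}
\item A uniform nilpotence bound comes from the finiteness theorem: $H_{\ell,\alpha}\otimes V_\nu$ is finitely generated over $U(\mathfrak g\otimes H^0(C',\sO_{C'}(\ast I)(-Nx_1)))$.
\item Combine this with the fact that $\mathrm{ad}(e_\theta\otimes f)$ lowers vanishing orders at $x_1$ only boundedly.
\item Kac's characterization of integrable highest weight modules, or the $\mathfrak{sl}_2$ spanned by $e_\theta\otimes t^{-1}$ and $f_\theta\otimes t$, then gives both the truncation $(\mu,\theta)\le\ell$ and the factorization through $H_{\ell,\mu}$.
\end{itemize}
This replacement of $V_\mu$ by $H_{\ell,\mu}$ is not ``propagation of vacua in reverse''; propagation of vacua only concerns inserting $H_{\ell,0}$ at a new point.

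By contrast, Step 1, which you single out as the main obstacle, is essentially formal and needs no filtration argument. Since $\hat{\mathfrak g}(X)/\mathfrak n\cong\Delta\mathfrak g$, one has $\mathbb V^\dagger_{X,\mathfrak g,\ell,\alpha}=\mathrm{Hom}_{\Delta\mathfrak g}(W,\mathbb C)$. Once $W$ is known to be finite dimensional, Weyl's theorem and Peter--Weyl give your description through $\mathbb C[G]$. Note, however, that the module literally induced from $\Delta\mathfrak g$, namely $U(\mathfrak g\oplus\mathfrak g)\otimes_{U(\Delta\mathfrak g)}\mathbb C$, is not $\bigoplus_\mu V_\mu\otimes V_{\mu^\ast}$. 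It is the finite dimensionality of $W$ that lets you replace it by $\mathbb C[G]$.

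This finiteness, and the surjectivity of $f\mapsto(f(x_1),f(x_2))$ onto $\mathfrak g\oplus\mathfrak g$, both require that $I$ meet every component of $C'$; otherwise insert vacua first. Finally, in Step 3 you should not appeal to the Verlinde formula, which is itself derived from factorization. Instead, establish the fibrewise statement first, then use that coinvariants commute with base change together with local freeness of the right-hand side.
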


For further applications, we also need the following Propagation of Vacua Theorem.
\begin{thm}[see Theorem 3.15 of \cite{MR2433154}]\label{theorem Propagation}
Let $f:\overline{\mathcal{M}}_{g,n+1}\rightarrow \overline{\mathcal{M}}_{g,n}$ be the forgetful map of the last marked point. Then $f^{\ast}\mathbb{V}^{\dagger}_{X,\mathfrak{g}, \ell, \alpha}=\mathbb{V}^{\dagger}_{X^+,\mathfrak{g}, \ell, \alpha\cup \{0\}}$, where $X=(C,I)=f(X^+)$ and $X^+=(C,I\cup \{x\})$.
\end{thm}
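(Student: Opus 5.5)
The plan is to prove the statement fibrewise: fix $X^+=(C,I\cup\{x\})$ with $x$ a smooth point of $C$ distinct from $I$, and construct a natural isomorphism
\[
\mathbb{V}^{\dagger}_{X^+,\mathfrak{g},\ell,\alpha\cup\{0\}}\xrightarrow{\ \sim\ }\mathbb{V}^{\dagger}_{X,\mathfrak{g},\ell,\alpha}.
\]
Then I will check that this isomorphism is compatible with the sheaf structures over $\overline{\mathcal{M}}_{g,n+1}$, which gives the equality $f^{\ast}\mathbb{V}^{\dagger}_{X,\mathfrak{g},\ell,\alpha}=\mathbb{V}^{\dagger}_{X^+,\mathfrak{g},\ell,\alpha\cup\{0\}}$. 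The key input is the vacuum module. $H_{\ell,0}$ contains the vacuum vector $v_0$, which is annihilated by $\mathfrak{g}\otimes\mathbb{C}[[t]]$, where $t$ is a local coordinate at $x$. As a $\mathfrak{g}\otimes t^{-1}\mathbb{C}[t^{-1}]$-module, $H_{\ell,0}$ is the quotient of $U(\mathfrak{g}\otimes t^{-1}\mathbb{C}[t^{-1}])\,v_0$ by the submodule generated by the single relation $(e_{\theta}\otimes t^{-1})^{\ell+1}v_0=0$.

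First I define $\iota:H_{\ell,\alpha}\to H_{\ell,\alpha}\otimes H_{\ell,0}$ by $v\mapsto v\otimes v_0$. Since $H^0(C,\mathcal{O}_C(\ast\sum_{y\in I}y))\subset H^0(C,\mathcal{O}_C(\ast(\sum_{y\in I}y+x)))$, and functions regular at $x$ act on $v_0$ through their Taylor expansion in $\mathbb{C}[[t]]$, the map $\iota$ is $\hat{\mathfrak{g}}(X)$-equivariant. Hence $\iota$ induces a map $\bar\iota:\mathbb{V}_{X,\mathfrak{g},\ell,\alpha}\to\mathbb{V}_{X^+,\mathfrak{g},\ell,\alpha\cup\{0\}}$, and dually a restriction map on conformal blocks.

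Next I prove that $\bar\iota$ is surjective. Take a vector $w=(X_1\otimes t^{-k_1})\cdots(X_m\otimes t^{-k_m})v_0$ with all $k_i>0$. By Riemann--Roch, after possibly allowing poles of high order along $I$ (when $I=\emptyset$ one uses a high-order pole at $x$ and inducts on $k$), there is $f\in H^0(C,\mathcal{O}_C(\ast(\sum_{y\in I}y+x)))$ whose polar part at $x$ is exactly $t^{-k_1}$ modulo regular terms. Modulo $\hat{\mathfrak{g}}(X^+)H$, the action of $X_1\otimes f$ is zero, so the action of $X_1\otimes t^{-k_1}$ at $x$ can be traded for the action of $X_1\otimes f$ at the points of $I$, plus terms of lower $t$-degree. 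Induction on $\sum k_i$ then shows that every class is represented in $H_{\ell,\alpha}\otimes v_0$.

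For injectivity I argue on the dual side. Given a $\hat{\mathfrak{g}}(X)$-invariant functional $\phi$ on $H_{\ell,\alpha}$, I extend it to $H_{\ell,\alpha}\otimes U(\mathfrak{g}\otimes t^{-1}\mathbb{C}[t^{-1}])v_0$ using the recursion from the surjectivity step. Well-definedness of this extension reduces to independence of the choice of $f$: two choices differ by a function regular at $x$, and such a function kills $v_0$ up to terms already handled, so $\phi$'s invariance gives independence. This produces a functional on the Verma-type induced module that is invariant under $\hat{\mathfrak{g}}(X^+)$.

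\textbf{Main obstacle.} The hard step is showing that this extended functional vanishes on the submodule generated by $(e_{\theta}\otimes t^{-1})^{\ell+1}v_0$, so that it descends to the integrable module $H_{\ell,0}$. I would first try to choose $f$ with a simple pole at $x$, so that $e_{\theta}\otimes f$ acts locally nilpotently on each integrable $H_{\ell,\alpha_y}$. Then I would use that $(e_{\theta}\otimes f)^{\ell+1}$, transported to the points of $I$, expands into terms each of which is killed by integrability, i.e. by $(e_\theta\otimes f)^{N}=0$ on $H_{\ell,\alpha}$ for large $N$, together with the $\mathfrak{sl}_2$-argument used in the standard proof. Finally, the construction is compatible with the sheaf structures because the choices of $f$ can be made locally in families over $\overline{\mathcal{M}}_{g,n+1}$ and $v_0$ is independent of the coordinate; this yields the isomorphism of sheaves.
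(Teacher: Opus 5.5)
The paper gives no proof of this statement; it is quoted from Theorem 3.15 of \cite{MR2433154}, so I am comparing your proposal with the standard argument. Your reduction is the right one:
\begin{itemize}
\item $v\mapsto v\otimes v_0$ is $\hat{\mathfrak g}(X)$-equivariant, and Riemann--Roch gives surjectivity on coinvariants.
\item An invariant $\phi$ extends to $H_{\ell,\alpha}\otimes M_0$, where $M_0$ is the vacuum Verma-type module. This is cleanest via Frobenius reciprocity, using $\mathfrak g(U')+\mathfrak g[[t]]=\mathfrak g((t))$ and $\mathfrak g(U')\cap\mathfrak g[[t]]=\mathfrak g(U)$, with $U=C\setminus I$ and $U'=U\setminus\{x\}$. ``Independence of the choice of $f$'' alone does not check compatibility with the commutation relations.
\item An induction on energy, using that constants lie in $\hat{\mathfrak g}(X^+)$, reduces the descent to $H_{\ell,0}$ to a single identity: $\phi\big((e_\theta\otimes f)^{\ell+1}u\big)=0$ for all $u$, where $f\in\mathcal{O}(U')$ has a simple pole at $x$.
\end{itemize}

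That identity is the real content of the theorem, and your proposal does not prove it. The vector $(e_\theta\otimes f)^{\ell+1}u$ is in general nonzero. Local nilpotency of $E:=e_\theta\otimes f$ gives $\phi(E^Nu)=0$ only for $N\ge N(u)$, and $N(u)$ is unbounded in $u$. Nothing in your sketch brings the exponent down to $\ell+1$, and ``the $\mathfrak{sl}_2$-argument used in the standard proof'' is left unspecified.

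Here is one way to close the gap.
\begin{itemize}
\item Choose $g\in\mathcal{O}(U)$ with a simple zero at $x$, normalized so that $g=t+O(t^2)$ and $f=t^{-1}+O(1)$. Set $F:=f_\theta\otimes g\in\hat{\mathfrak g}(X)$.
\item By the residue theorem, on $H_{\ell,\alpha}$ one has $[E,F]=h_\theta\otimes fg+\ell$.
\item Also $[h_\theta\otimes fg,E]=2E+2D$, where $D=e_\theta\otimes f(fg-1)$ lies in $\hat{\mathfrak g}(X)$ and commutes with $E$.
\item Since $\phi$ kills $F$, $h_\theta\otimes fg$ and $D$, and $DE^k=E^kD$, one gets
\[
\phi\circ E^{j}F=j(\ell+1-j)\,\phi\circ E^{j-1}\qquad (j\geq 1).
\]
\item For $j\ge\ell+2$ the coefficient is nonzero, so $\phi\circ E^{\ell+1}=c_M\,\phi\circ E^{\ell+1+M}F^{M}$ for every $M\geq 0$.
\item What finishes the argument is the local nilpotency of $F$, not of $E$, on the integrable module $H_{\ell,\alpha}$: $F^Mu=0$ for $M\gg 0$.
\end{itemize}
Alternatively, TUY and Ueno use Ward identities together with the vanishing of the normally ordered power $e_\theta(z)^{\ell+1}$ on level-$\ell$ integrable modules.

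Two smaller points.
\begin{itemize}
\item For $I=\emptyset$, your high-order-pole induction fails because of Weierstrass gaps: $\mathcal{O}(C\setminus x)\to\mathbb{C}((t))/\mathbb{C}[[t]]$ has cokernel $H^1(C,\mathcal{O}_C)$. With the paper's literal definition, $\mathbb{V}^{\dagger}_{X,\mathfrak g,\ell,\emptyset}=\mathbb{C}$. So the case $n=0$ is a convention, not a consequence.
\item For nodal $C$, the Riemann--Roch surjectivity, and the existence of $f$ and $g$, require every component of $C$ to meet $I$. This is the standing hypothesis in Ueno's setting, and your proposal does not address it.
\end{itemize}
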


Then we show that there is  a natural product morphism
\begin{equation}\label{product of conformal blocks}
\mathbb{V}_{X,\mathfrak{g},\ell, \alpha}^{\dag}\otimes \mathbb{V}_{X,\mathfrak{g},m, \beta}^{\dag}\to \mathbb{V}_{X,\mathfrak{g},\ell+m, \alpha+\beta}. 
\end{equation}
From the representation theoretic viewpoint, this map is constructed as follows. For each $x\in I$, the tensor product $V_{\alpha_x}\otimes V_{\beta_x}$ of $\mathfrak{g}$-representations has a unique irreducible subrepresentation isomorphic to $V_{\alpha_x+\beta_x}$. Hence there is a canonical $\mathfrak{g}$-module morphism $V_{\alpha_x+\beta_x}\to V_{\alpha_x}\otimes V_{\beta_x}$. This extends to $\hat{\mathfrak{g}}$-module morphism  
\[
H_{\ell+m, \alpha_x+\beta_x}\to H_{\ell,\alpha_x}\otimes H_{m,\beta_x},
\] where $\alpha+\beta=\{\alpha_x+\beta_x\}_{x\in I}$.
Taking tensor products over $x\in I$ yields
\[
H_{\ell+m, \alpha+\beta}\to H_{\ell,\alpha}\otimes H_{m,\beta},
\]
and its dualizing gives \[
H_{\ell,\alpha}^{\ast}\otimes H_{m,\beta}^{\ast}\to H_{\ell+m, \alpha+\beta}^{\ast}.
\]
One checks that $\hat{\mathfrak{g}}$-action is compatible; passing to quotients yields the map in (\ref{product of conformal blocks}). For details see \cite[\S 2.2]{M}.

\goodbreak

Thus, the direct sum of all conformal blocks
\begin{equation}\label{equation graded}
\mathbb{V}_{X,\mathfrak{g}}^{\dag}:=\bigoplus_{\ell,\alpha}\mathbb{V}_{X,\mathfrak{g},\ell,\alpha}
\end{equation}
acquires the structure of a commutative graded (by
$\mathbb{Z}\times \prod_{x\in I}X^{\ast}(B_x)$) $\mathbb{C}$-algebra, called the {\em algebra of conformal blocks}.
The construction is relative over any family of stable curves, and this gives a flat sheaf of algebras $\mathbb{V}_{\mathfrak{g}}^{\dag}:=\oplus_{\ell,\alpha}\mathbb{V}_{\mathfrak{g},\ell,\alpha}^{\dag}$
on $\overline{\mathcal{M}}_{g,n}$ (see \cite{TUY}).

\section{Anti-canonical bundles of moduli spaces}\label{anti-canonical bundle}\label{section fano}

In this section, we show that there is canonical parabolic data $\omega_c$ such that:
\begin{itemize}
\item[(1)] $\mathcal{M}_{G,\,\omega_c}$ is Fano for a simple and simply connected group $G$ of type $C$ or $D$ (see Proposition \ref{thm type C} $\&$ \ref{thm type D});

\item[(2)] the sheaf $\omega^{-2}_{\mathcal{M}_{G,\omega_c}}$ is ample and invertible for the type $B$ case (see $\text{Remark \ref{rmk type B}}$).
\end{itemize}
\goodbreak
We consider the standard representation for each type of classical groups. First, consider
$$\Phi:G=\mathrm{Sp}_{2m}(\mathbb{C})\hookrightarrow \mathrm{SL}(V)\subset \mathrm{GL}(V),\,\, V:=\mathbb{C}^{\oplus 2m},$$ where the Dynkin index $m_{\phi}=1$ for $\phi: \mathfrak{g}\rightarrow \mathrm{sl}(V)$ and $m_{ad}=2(m+1)$ for the adjoint representation, with respect to the normalized Killing form.
Let $$T=\{\mathrm{diag}(x_1^{-1},x_2^{-1},\ldots,x_m^{-1},x_1,x_2,\ldots, x_m)\}\subset \mathrm{Sp}_{2m}(\mathbb{C})$$ be the maximal torus. Suppose $B_x$ ($x\in I$) are Borel subgroups.

Set $\omega_c:=(\{P_x\text{=}B_x\}_{x\in I},\{{a}_x\}_{x\in I})$, ${a}_x=\frac{b_x}{m_{ad}}$, $$\aligned b_x:\mathbb{G}_m \rightarrow T\subset B_x,\,\,\,\,
k \mapsto (k^{-m},\ldots,k^{-1},k^{m},\ldots,k), \endaligned$$ 
and $\widetilde{\omega}_c:=(\{\widetilde{P}_x\}_{x\in I}, \{\widetilde{{a}}_x\}_{x\in I})$, where $$ \Phi_{\ast}(P_x,{a}_x):=(\widetilde{P}_x, \widetilde{{a}}_x)=(P_{\mathrm{SL}(V)}(\widetilde {a}_x),\Phi\circ {a}_x).
$$
Clearly, the \emph{quasi-parabolic $G$-bundle} $(E,\{s_x\}_{x\in I})$ of type $\{P_x\}_{x\in I}$
gives rise to a quasi-parabolic $\mathrm{SL}(V)$-bundle $(E_{\mathrm{SL}(V)}, \{\tilde{s}_x:=\Phi_{\ast}(s_x)\}_{x\in I})$, where $$\Phi_{\ast}(s_x): \{x\}\xrightarrow{s_x}E|_{\{x\}}/P_x\hookrightarrow E_{\mathrm{SL}(V)}|_{\{x\}}/\widetilde{P}_x.$$

\goodbreak
\begin{lem}\label{lem finite map type C} Let $\tau_x=\mathrm{Lie}({a}_x)$ and $\tau'_x=\mathrm{Lie}(\widetilde{{a}}_x)$ for $x\in I$. Then $\theta_{\mathrm{sl}(V)}(\tau'_x)<1$, and there exists a finite map $\psi:\mathcal{M}_{G,\,\omega_c}\rightarrow \mathcal{M}_{\mathrm{SL}(V),\widetilde{\omega}_c}.$ 
\end{lem}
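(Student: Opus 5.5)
The first assertion is a direct computation, and the finite map will come from functoriality of the moduli construction along $\Phi$. For the computation, $\Phi\circ b_x$ acts on $V=\mathbb{C}^{\oplus 2m}$ with weights $-m,\ldots,-1,m,\ldots,1$, all distinct. Hence $\tau'_x=\mathrm{Lie}(\widetilde{a}_x)$ is a regular diagonal element of $\mathrm{sl}(V)$ with eigenvalues $\pm\frac{j}{2(m+1)}$ for $1\le j\le m$. The highest root of $\mathrm{sl}(V)$ evaluates on (the dominant conjugate of) such an element as the largest eigenvalue minus the smallest. This gives
$$\theta_{\mathrm{sl}(V)}(\tau'_x)=\frac{m-(-m)}{2(m+1)}=\frac{m}{m+1}<1 .$$
Two consequences follow. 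First, $\widetilde{P}_x=P_{\mathrm{SL}(V)}(\widetilde{a}_x)$ is a Borel subgroup of $\mathrm{SL}(V)$, so $\widetilde{a}_x\in X^*(\widetilde{P}_x)^\vee_{\mathbb{Q},+}$. Second, $\tau'_x\in\Phi_0^s$ for $\mathrm{SL}(V)$. By Theorem \ref{thm existence of moduli space}, the moduli space $\mathcal{M}_{\mathrm{SL}(V),\widetilde{\omega}_c}$ therefore exists and is projective.

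Next I pass to the equivariant picture. Since $\Phi\circ b_x$ has the same denominator $d_x$ as $b_x$ (namely $m_{ad}$, up to reduction), I can use the same Galois cover $\pi:\widehat C\to C$ with group $A$ for both groups. Theorem \ref{correspondence} then applies to $G$ and to $\mathrm{SL}(V)$, and extension of structure group $\widehat E\mapsto \widehat E_{\mathrm{SL}(V)}$ is compatible with it: it sends local type $\bar\tau_x$ to local type $\Phi_*\bar\tau_x$, and on $C$ it sends $(E,\{s_x\})$ to $(E_{\mathrm{SL}(V)},\{\Phi_*(s_x)\})$.

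The key step is that $\omega_c$-semistability is preserved. Take an $A$-semistable $G$-bundle $\widehat E$. It is semistable as a $G$-bundle: otherwise its canonical (Harder–Narasimhan) reduction is unique, hence $A$-invariant, and this contradicts $A$-semistability. By Ramanathan's theorem in characteristic $0$, the vector bundle $\widehat E(V)$ is then semistable. Its Harder–Narasimhan filtration is unique and hence $A$-equivariant, so $\widehat E(V)$ is also $A$-semistable. Using $\deg(\widehat E_Q)=N\,a\text{-}\deg(E_Q)$, this says $(E_{\mathrm{SL}(V)},\{\tilde s_x\})$ is $\widetilde\omega_c$-semistable. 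The same functoriality respects $S$-equivalence, since the associated graded of a Jordan–Hölder-type reduction maps to the corresponding associated graded.

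To build $\psi$ I would use the construction in (\ref{equation affine}). The family $\mathcal{R}^{ss}_\tau(G)$ carries the family of $A$-semistable $\mathrm{SL}(V)$-bundles $\hat{\mathcal F}$ with its $G$-reduction $\sigma$. Forgetting $\sigma$ gives the $\mathfrak{G}$-invariant affine morphism $f_\tau$ into the $\mathrm{SL}(V)$ part of $\mathcal{R}^{ss}_{\hat\tau}$. Passing to good quotients yields a morphism $\psi:\mathcal{M}_{G,\omega_c}\to\mathcal{M}_{\mathrm{SL}(V),\widetilde\omega_c}$, and it is affine because $f_\tau$ is affine and $\mathfrak{G}$ is reductive. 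Both spaces are projective, so $\psi$ is proper. A proper affine morphism is finite, which gives the claim.

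I expect the main obstacle to be checking that $\psi$ is really affine after taking quotients. That means verifying that the quotient of the affine $\mathfrak{G}$-morphism $f_\tau$ stays affine over the target, and that the target is the $\widetilde\omega_c$ moduli space rather than just the $\mathrm{GL}(V)$ one. If this becomes delicate, my fallback is to show $\psi$ is quasi-finite: $S$-equivalence classes of $G$-bundles inducing a fixed polystable $\mathrm{SL}(V)$-bundle are finite in number, because $G$-reductions of a polystable bundle are governed by a reductive automorphism group with finitely many relevant orbits. Then proper plus quasi-finite gives finite.
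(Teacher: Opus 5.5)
Your proposal is correct and follows essentially the same route as the paper: compute $\theta_{\mathrm{sl}(V)}(\tau'_x)=\frac{m}{m+1}$, use one Galois cover for both groups, note that extension of structure group along $\Phi$ preserves semistability, and deduce finiteness from properness of $\mathcal{M}_{G,\omega_c}$ together with the affineness coming from (\ref{equation affine}). The obstacle you flag is harmless, so the fallback is unnecessary: (\ref{equation affine}) makes the composite $\mathcal{M}_{G,\omega_c}\to\mathcal{M}^{A}_{\mathrm{GL}(V)}(\widehat C)$ affine, and because this composite factors through the separated scheme $\mathcal{M}_{\mathrm{SL}(V),\widetilde\omega_c}$, the map $\psi$ is itself affine (alternatively, quasi-finite and proper).
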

\begin{proof} By definition, $\theta_{\mathrm{sl}(V)}(\tau'_x)=\frac{m}{m+1}$, and there is a commutative diagram (see Theorem \ref{correspondence})
\begin{equation}\label{commutative diagram}
\CD
  \mathrm{Bun}_G^{A,\vec \tau}(\widehat{C}) @>>> \mathrm{Bun}_{\mathrm{SL}(V)}^{A,\vec \tau'}(\widehat{C}) \\
  @V \wr VV   @V \wr VV  \\
  \mathrm{Bun}_{G,\{P_x\}_{x\in I}} @>>> \,\mathrm{Bun}_{\mathrm{SL(V)},\{\widetilde{P}_x\}_{x\in I}},
\endCD
\end{equation} where $\vec \tau=\{\bar \tau_x\}_{x\in I}$ with $\tau_x=\frac{\bar \tau_x}{2(m+1)}$, and $\vec \tau'=\{\bar \tau'_x\}_{x\in I}$ with $\tau_x'=\frac{\bar \tau'_x}{2(m+1)}$.
Consider the faithful representation $\Phi: G\rightarrow \mathrm{SL}(V)$. Then $\widehat E_{\mathrm{SL}(V)}$ is  semistable if $\widehat E$ is semistable. Hence the diagram (\ref{commutative diagram}) restricts to the semistable locus, and we obtain the following diagram of moduli spaces \begin{equation}\label{commutative diagram of moduli space}
\CD
  \mathcal{M}_G^{A,\vec \tau}(\widehat{C}) @>>> \mathcal{M}_{\mathrm{SL}(V)}^{A,\vec \tau'}(\widehat{C}) \\
  @V\wr VV  @V\wr VV  \\
  \mathcal{M}_{G,\,\omega_c} @>\psi>> \mathcal{M}_{\mathrm{SL(V)},\widetilde \omega_c}.
\endCD
\end{equation}
To prove that $\psi$ is finite, by properness of $\mathcal{M}_{G,\, \omega_c}$, we only need to show it is affine. In fact, this follows from (\ref{equation affine}).  We are done.
\end{proof}

Recall that a
parabolic $\mathrm{SL}(V)$-bundle corresponds to a vector bundle $E$ of rank $2m$ with a (full) flag of quotients ($x\in I$)
$$E_x\twoheadrightarrow Q_{2m-1}(E)_x \twoheadrightarrow \cdots \twoheadrightarrow Q_1(E)_x\twoheadrightarrow 0,$$
and the data $\widetilde{\omega}_c$ corresponds to a sequence of integers 
$$d_1(x)=\cdots =d_{m-1}(x)=d_{m+1}(x)=\cdots=d_{2m-1}(x)=1,\,\, d_m(x)=2,$$ with 
$k=2(m+1)$ (see Example \ref{ex 2.7}). Let $\sE'$ be the universal $\mathrm{SL}(V)$-bundle on $C\times \mathrm{Bun}_{\mathrm{SL}(V),\{\widetilde{P}_x\}_{x\in I}}$. 
\begin{lem}[see Theorem 3.1 of \cite{Su3}]\label{lem 3.2} If $2m\mid |I|$, then there exists an ample line bundle $\Theta_{\mathcal{M}_{\mathrm{SL}(V),\widetilde{\omega}_c}}$
on ${\mathcal{M}_{\mathrm{SL}(V),\widetilde{\omega}_c}}$ such that \begin{equation}\label{equation pull-back of theta bundle}\pi^{\ast}_{\mathrm{SL}(V)}\Theta_{\mathcal{M}_{\mathrm{SL}(V),\widetilde{\omega}_c}}=\mathrm{det}\mathrm{R}pr_{2\ast}
\sE'(V)^{-k}\otimes \underset{x\in I}{\bigotimes}\sL(k\chi_{\widetilde{{a}}_x}), \,{\Small {k{=}2(m{+}1)}},
\end{equation}
where $\mathrm{Bun}^{ss}_{\mathrm{SL}(V),\widetilde{\omega}_c}\xrightarrow{\pi_{\mathrm{SL}(V)}}\mathcal{M}_{\mathrm{SL}(V),\widetilde{\omega}_c}, \,C\times \mathrm{Bun}^{ss}_{\mathrm{SL}(V),\widetilde{\omega}_c}\xrightarrow{pr_2}\mathrm{Bun}^{ss}_{\mathrm{SL}(V),\widetilde{\omega}_c}$.
\end{lem}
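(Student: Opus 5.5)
This is the parabolic theta-bundle result of \cite[Theorem 3.1]{Su3}, specialized to full flags with the integers $d_i(x)$ fixed above. I would prove it by descending the line bundle
\[
\mathcal{L}:=\mathrm{det}\mathrm{R}pr_{2\ast}\sE'(V)^{-k}\otimes\bigotimes_{x\in I}\sL(k\chi_{\widetilde{a}_x})
\]
from the stack to the GIT quotient, and then identifying the descended bundle with the polarization used in the GIT construction. I will work with the Quot-scheme presentation of Section \ref{Section equivariant}, or equivalently with Seshadri's presentation of parabolic bundles on $C$: a parameter scheme $\mathcal{R}$ carrying a flag bundle over quotients $\mathcal{O}(-N)^{P(N)}\to F$, on which $\mathrm{SL}_{P(N)}$ acts, with $\mathcal{M}_{\mathrm{SL}(V),\widetilde{\omega}_c}=\mathcal{R}^{ss}/\!\!/\mathrm{SL}_{P(N)}$.

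First I pull $\mathcal{L}$ back to $\mathcal{R}^{ss}$. The GIT linearization is a product: a power of the Plücker (Grothendieck) line bundle on the Quot scheme, evaluated at finitely many points or via $H^0$, tensored with $\bigotimes_x\bigotimes_i\det(\sQ_{x,i})^{d_i(x)}$ from \eqref{2.4}. Using Riemann--Roch and the Knudsen--Mumford expansion of $\det \mathrm{R}pr_{2\ast}$, I will express this linearization as $\mathcal{L}$ up to a character twist. The role of the hypothesis $2m\mid |I|$ is to make the weights balance. The centre $\mu_{P(N)}\subset \mathrm{SL}_{P(N)}$ (equivalently the scalars $\mathbb{G}_m$ of $\mathrm{GL}_{P(N)}$) acts on the fibre of $\mathcal{L}$ at a point by the character of weight
\[
k\,\chi(F)-\sum_{x\in I}\sum_{i} d_i(x)\,r_i(x)
\]
up to sign, with $\chi(F)=r(1-g)$ after the twist normalization. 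Since $\sum_i d_i(x)r_i(x)$ is the same for every $x$, this weight equals $k\,r(1-g)-|I|\cdot c$ for a constant $c$, and the divisibility makes it vanish (after rescaling $k$ if necessary). The scalars then act trivially, and $\mathcal{L}$ carries an $\mathrm{SL}_{P(N)}$-linearization that agrees with the ample GIT one. I expect this weight bookkeeping to be the main obstacle, in particular getting the signs and the normalization by $\mathcal{O}(-N)$ right. I would first check it at a direct-sum point $\bigoplus \mathcal{O}_C$ with split flags.

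Next comes descent. By Kempf's descent lemma, $\mathcal{L}$ descends to $\mathcal{M}_{\mathrm{SL}(V),\widetilde{\omega}_c}$ provided the stabilizer of every point with closed orbit in $\mathcal{R}^{ss}$ acts trivially on the fibre. Such points are polystable: $F=\bigoplus F_j$ with $\omega$-stable parabolic summands, and the stabilizer is $\prod \mathrm{GL}(n_j)$ (modulo the determinant condition). A factor $\mathbb{G}_m$ acting on $F_j$ acts on the fibre by the same weight formula, now for $F_j$. Semistability with equal parabolic slope forces this weight to be $k\cdot\mathrm{pardeg}(F_j)=0$, because the $d_i(x)$ are exactly the $k$-multiples of the weights. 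The descended bundle $\Theta$ pulls back to the GIT polarization, which is ample on the quotient. Hence $\Theta$ is ample, and by construction $\pi^{\ast}_{\mathrm{SL}(V)}\Theta=\mathcal{L}$, giving \eqref{equation pull-back of theta bundle}.
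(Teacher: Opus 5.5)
Your strategy is the standard route behind the result of \cite{Su3}, which the paper simply cites: Kempf descent from a Quot-type parameter space, then comparison with the GIT polarization. However, the step where you make the scalars act trivially is false, and the rest of the argument depends on it. By your own formula the scalar weight is $w=-k\chi(F)+\sum_{x\in I}\sum_i d_i(x)r_i(x)$, up to sign. For full flags $r_i(x)=i$, and with $d_m(x)=2$ and all other $d_i(x)=1$, one gets $\sum_i d_i(x)r_i(x)=\sum_{i=1}^{2m-1}i+m=2m^2$. With $\chi(F)=2m(1-g)$ and $k=2(m+1)$ this gives
\[
w=2m\bigl(m|I|-2(m+1)(1-g)\bigr).
\]
This is nonzero for every $g\geq 1$, and in all but isolated genus-$0$ cases, whether or not $2m\mid |I|$. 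Rescaling $k$ and the $d_i(x)$ only rescales $w$, and $k$ is fixed in the statement anyway. Triviality on $\mu_{P(N)}\subset\mathrm{SL}_{P(N)}$ and triviality on the scalars of $\mathrm{GL}_{P(N)}$ are also not equivalent conditions: the first means $P(N)\mid w$, the second means $w=0$. The error propagates to your stabilizer check. A scalar on a stable summand $F_j$ of rank $r_j$ acts with weight $-k\chi(F_j)+\sum_x\sum_i d_i(x)\dim Q_{x,i}(F_j)$. By equality of parabolic slopes this equals $r_j w/r$, not $k\,\mathrm{pardeg}(F_j)=0$.

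The missing idea is the determinant correction. On the $\mathrm{GL}$ side you should descend $\mathcal{L}\otimes(\det\mathcal{F}_y)^{\ell}$ with $\ell=(k\chi-\sum_x\sum_i d_i(x)r_i(x))/r=2(m+1)(1-g)-m|I|$.
\begin{itemize}
\item This twist makes the scalar weight zero.
\item It turns each stabilizer weight into $-k\bigl(\mathrm{pardeg}(F_j)-r_j\,\mathrm{pardeg}(F)/r\bigr)$, which vanishes by equality of parabolic slopes.
\item $\det\sE'(V)_y$ is canonically trivial on the stack of $\mathrm{SL}(V)$-bundles, so the pull-back to $\mathrm{Bun}^{ss}_{\mathrm{SL}(V),\widetilde\omega_c}$ is still $\mathcal{L}$.
\end{itemize}
Equivalently, in the $\mathrm{SL}$ picture the stabilizer of a polystable point is $\{(g_j):\prod_j\det(g_j)^{r_j}=1\}$. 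The character $\prod_j\det(g_j)^{r_jw/r}$ is trivial on it because $w/r\in\mathbb{Z}$. The integrality that actually matters is $r\mid k\chi-\sum_x\sum_i d_i(x)r_i(x)$, and this holds here for every $|I|$. So the hypothesis $2m\mid|I|$ is not what makes the weights balance. With this correction, the rest of your outline (matching the GIT linearization and ampleness) goes through as in Sun's theorem.
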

On $\mathcal{M}_{G,\, \omega_c}$, define $\Theta_{\mathcal{M}_{G,\,\omega_c}}:=\psi^{\ast}\Theta_{\mathcal{M}_{\mathrm{SL}(V),\widetilde{\omega}_c}}$.
A normal projective variety $X$ is called {Fano} if $\omega^{-1}_X=\mathcal{H}om(\omega_X,\sO_X)$ is
an ample line bundle.

\begin{prop}\label{thm type C} Assume that $2m\mid |I|\gg 0$ such that $$\mathrm{Codim}(\mathrm{Bun}_{G,\,\omega_c}\setminus \mathrm{Bun}^{rs}_{G,\,\omega_c})\geq 2\,\,\,\,(\text{see Proposition \ref{prop codim semistable and stable parabolic} $\&$ \ref{prop codimension regularly stable}}).$$ Then the moduli  space $\mathcal{M}_{G,\,\omega_c}$
is {Fano} when $G=\mathrm{Sp}_{2m}(\mathbb{C})$.
\end{prop}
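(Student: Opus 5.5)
The plan is to identify $\omega^{-1}_{\mathcal{M}_{G,\omega_c}}$ with the ample line bundle $\Theta_{\mathcal{M}_{G,\omega_c}}=\psi^*\Theta_{\mathcal{M}_{\mathrm{SL}(V),\widetilde\omega_c}}$, which is ample because $\psi$ is finite (Lemma \ref{lem finite map type C}) and $\Theta_{\mathcal{M}_{\mathrm{SL}(V),\widetilde\omega_c}}$ is ample (Lemma \ref{lem 3.2}). It therefore suffices to show $\omega^{-1}_{\mathcal{M}_{G,\omega_c}}\cong\Theta_{\mathcal{M}_{G,\omega_c}}$ as reflexive sheaves. Both sides are reflexive rank one sheaves on the normal variety $\mathcal{M}_{G,\omega_c}$, and the right-hand side is invertible. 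Hence it is enough to compare them on an open subset whose complement has codimension at least $2$.

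\textbf{Step 1 (reduction to the stack).} Let $\mathcal{M}^{rs}\subset\mathcal{M}_{G,\omega_c}$ be the image of $\mathrm{Bun}^{rs}_{G,\omega_c}$. Over it, $\pi:\mathrm{Bun}^{rs}_{G,\omega_c}\to\mathcal{M}^{rs}$ is a $Z(G)$-gerbe, so $\mathcal{M}^{rs}$ is smooth and $\pi^*\omega_{\mathcal{M}^{rs}}=\omega_{\mathrm{Bun}^{rs}}$, since $Z(G)$ is finite. By the hypothesis $\mathrm{Codim}(\mathrm{Bun}_{G,\omega_c}\setminus\mathrm{Bun}^{rs}_{G,\omega_c})\ge 2$, and because fibres of $\pi$ over stable points have dimension $\dim\mathrm{Bun}-\dim\mathcal{M}=-\dim Z(G)=0$ while fibres over strictly semistable points are larger, the complement $\mathcal{M}_{G,\omega_c}\setminus\mathcal{M}^{rs}$ has codimension $\ge 2$. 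Also $\mathrm{Pic}(\mathrm{Bun}^{rs})\supset\pi^*\mathrm{Pic}(\mathcal{M}^{rs})$ injectively, as $\pi_*\mathcal{O}=\mathcal{O}$. So it suffices to prove
\[
\omega^{-1}_{\mathrm{Bun}_{G,\{B_x\}}}\cong \pi^*\Theta_{\mathcal{M}_{G,\omega_c}}
\]
on $\mathrm{Bun}_{G,\{B_x\}}$, and then restrict to $\mathrm{Bun}^{rs}$.

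\textbf{Step 2 (computing both sides in $\mathrm{Pic}$).} By Lemma \ref{lem anti-canonical}(3), $\omega^{-1}=\sD_{\mathrm{Ad}}\otimes\bigotimes_x\sL(2\rho)$. Using (\ref{4.1}), $\sD_{\mathrm{Ad}}$ corresponds to level $m_{ad}=2(m+1)$. By the diagram (\ref{commutative diagram}) and Lemma \ref{lem 3.2}, the pullback of $\Theta_{\mathcal{M}_{\mathrm{SL}(V),\widetilde\omega_c}}$ to $\mathrm{Bun}_{G,\{B_x\}}$ is $\mathrm{det}\mathrm{R}pr_{2*}\sE(V)^{-k}\otimes\bigotimes_x\sL(k\chi_{\widetilde a_x}|_{B_x})$ with $k=2(m+1)$. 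Its level is $k\,m_\phi=2(m+1)$, which matches. It remains to compare the point characters. The restriction of $k\chi_{\widetilde a_x}$ to $T$ is $k(\widetilde a_x,-)_{\mathrm{sl}(V)}$, and since $m_\phi=1$ this equals $k(a_x,-)_{\mathfrak g}=(b_x,-)_{\mathfrak g}$. Writing $b_x=\sum_i i\,\epsilon_i^\vee$ (up to the sign convention of $T$), we get $(b_x,-)=\sum_i(m+1-i)\epsilon_i$ after reindexing, which is exactly $2\rho$ for $C_m$ in the normalization where long roots $2\epsilon_i$ have length squared $2$. Equivalently, this is the sequence $d_i=1$ except $d_m=2$ recorded above. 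Hence the point parts agree.

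\textbf{Main obstacle.} The delicate point is Step 2: checking that the pairing normalizations and the sign conventions for $\sL(\chi)=S_x^*\sE(\chi)^{-1}$ and $L(\chi)$ match exactly, so that $k\chi_{\widetilde a_x}$ restricts to $2\rho$ rather than to a multiple or twist of it. I would verify this on a single $\mathrm{Sp}_{2m}/B$ fibre using Lemma \ref{lem anti-canonical}(1)–(2), where $\omega^{-1}_{G/B}=L(2\rho)$, together with (\ref{2.4}). A secondary issue is that Lemma \ref{lem 3.2} needs $2m\mid|I|$, which is assumed, and that $\mathrm{Bun}^{rs}$ is nonempty, which follows from $|I|\gg0$.
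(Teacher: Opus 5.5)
Your proposal follows essentially the same route as the paper: reduce via the codimension hypothesis to $\mathcal{M}^{rs}_{G,\omega_c}$, pass through the $Z(G)$-gerbe to $\mathrm{Bun}^{rs}_{G,\omega_c}$, and compare $\sD_{\mathrm{Ad}}\otimes\bigotimes_x\sL(2\rho)$ with the pulled-back theta bundle by matching levels (Kumar--Narasimhan--Ramanathan, $k\,m_\phi=m_{ad}$) and point characters ($\Phi^*\chi_{\widetilde a_x}=m_\phi\chi_{a_x}$). There is one arithmetic slip in Step 2: since $(\epsilon_i^\vee,\epsilon_j^\vee)_{\mathfrak g}=2\delta_{ij}$ in this normalization, $(b_x,-)_{\mathfrak g}=\sum_i 2(m+1-i)\epsilon_i$, and that is $2\rho$, whereas the $\sum_i(m+1-i)\epsilon_i$ you wrote is $\rho$; the two errors cancel, so your conclusion is correct and agrees with your cross-check via $d_i=1$, $d_m=2$.
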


\begin{proof} By the codimension assumption, it suffices to show that
\begin{equation}\label{equatin isomorphism}\omega^{-1}_{\mathcal{M}_{G,{\omega}_c}}|_{\mathcal{M}_{G,\, \omega_c}^{\mathrm{rs}}}= \Theta_{\mathcal{M}_{G,{\omega}_c}}|_{{\mathcal{M}_{G,\, \omega_c}^{\mathrm{rs}}}}.
\end{equation}

To prove (\ref{equatin isomorphism}), note that $\pi_G: \mathrm{Bun}^{rs}_{G,\, \omega_c}\rightarrow \mathcal{M}^{rs}_{G,\, \omega_c}$ is a $Z(G)$-gerbe,
it is enough to show, on $\mathrm{Bun}^{rs}_{G,\, \omega_c}$, that
\begin{equation}\label{equation line bundle isomorphism}
\omega^{-1}_{\mathrm{Bun}^{rs}_{G,\, \omega_c}}= \pi^{\ast}_G (\Theta_{\mathcal{M}_{G,\omega_c}}|_{\mathcal{M}^{rs}_{G,\omega_c}})\big(= (\psi|_{\mathcal{M}^{rs}_{G,\omega_c}} \circ \pi_G)^{\ast}\Theta_{\mathcal{M}_{\mathrm{SL}(V),\widetilde{\omega}_c}}\big).
\end{equation}
Let $\sE$ (resp. $\{S_x\}_{x\in I}$) be the universal $G$-bundle (resp. universal sections) in (\ref{4.2}). Then $\psi|_{\mathcal{M}_{G,\omega_c}^{rs}}\circ \pi_{G}:\mathrm{Bun}^{ss}_{G,\,\omega_c}\rightarrow \mathcal{M}_{G,\,\omega_c}\rightarrow \mathcal{M}_{\mathrm{SL}(V),\widetilde{\omega}_c}$
is given by $\sE_{\mathrm{SL}(V)}$ with
$\{\widetilde{S}_x:\mathrm{Bun}_{G,\omega_c}^{ss}\xrightarrow{S_x}\sE|_x/P_x{\hookrightarrow} \sE_{\mathrm{SL}(V)}|_x/\widetilde{P}_x\}_{x\in I}$, and there is a commutative diagram
$$\CD
  \mathrm{Bun}_{G,\, \omega_c}^{rs} @>\bar \Phi>> \mathrm{Bun}^{ss}_{\mathrm{SL}(V),\widetilde{\omega}_c} \\
  @V\pi_G VV @V\pi_{\mathrm{SL}(V)} VV  \\
  \mathcal{M}^{rs}_{G,\,\omega_c} @>\psi|_{\mathcal{M}^{rs}_{G,\, \omega_c}} >> \mathcal{M}_{\mathrm{SL}(V),\widetilde{\omega}_c}.
\endCD
$$
Combining with Lemma \ref{lem 3.2}, $\pi^{\ast}_G(\Theta_{\mathcal{M}_{G,\omega_c}}^{m_{ad}}|_{\mathcal{M}^{rs}_{G,\omega_c}})=\bar{\Phi}^{\ast}\pi_{\mathrm{SL}(V)}^{\ast}\Theta^{m_{ad}}_{\mathcal{M}_{\mathrm{SL}(V),\widetilde{\omega}_c}}\\
$
$$\aligned =&\bar{\Phi}^{\ast}(\mathrm{detR}pr_{2\ast}\sE'(V))^{-km_{ad}}\otimes \underset{x\in I}{\bigotimes}\, \bar{\Phi}^{\ast}\sL(k\chi_{\widetilde{{a}}_x})^{m_{ad}} \\=&\mathcal{D}_{\mathrm{Ad}}^{km_{\phi}}\otimes \underset{x\in I}{\bigotimes}\, \bar{\Phi}^{\ast}\sL(k\chi_{\widetilde{{a}}_x})^{m_{ad}}
\endaligned$$ by a result of Kumar, Narasimhan and  Ramanathan $$\mathcal{D}_{\mathrm{Ad}}^{m_{\phi}}= \mathrm{detR}pr_{2\ast}\sE(V)^{-m_{ad}}\,\,\,\,\text{(see \cite[Theorem 5.4]{KNR})},$$
where $m_{\phi}=1$, $k=m_{ad}=2(m+1)$. Recall the formula of the anti-canonical bundle in Lemma \ref{lem anti-canonical}. Then
(\ref{equation line bundle isomorphism}) reduces to showing that
$$
\bar{\Phi}^{\ast}\sL(\chi_{\widetilde{{a}}_x})^{m_{ad}}=\sL(2\rho)^{m_{\phi}},\,\, 2\rho=m_{ad}\chi_{{a}_x},
$$
which follows from the formula ${\bar{\Phi}}_x^{\ast}\sE'(\chi_{\widetilde{{a}}_x})^{-1}=\sE(m_{\phi}\chi_{{a}_x})^{-1}$ with
$\bar{\Phi}_x:\sE_x/P_x\rightarrow \sE'_x/\widetilde{P}_x$.
To see this, note that $\bar{\Phi}_x^{\ast}\sE'(\chi_{\widetilde{{a}}_x})^{-1}$
is defined by $$\Phi_x^{\ast}(\chi_{\widetilde{{a}}_x})^{-1}: P_x\xrightarrow{\Phi}\widetilde{P}_x\xrightarrow{(\chi_{\widetilde{{a}}_x})^{-1}}
\mathbb{G}_m,\,\,\, t\mapsto (\chi_{\widetilde{{a}}_x}) (\Phi(t))^{-1},$$
then by (\ref{2.2}), for any 1-PS $\lambda'$, we have
$$\Phi_x^{\ast}(\chi_{\widetilde{{a}}_x})(\lambda')=({a}_x,\lambda')_{\mathfrak{g}}
=m_{\phi}(\widetilde{{a}}_x, \Phi_{\ast}(\lambda'))_{\mathrm{sl}(V)}=m_{\phi}\chi_{{a}_x}(\lambda'),$$
where ${a}_x$ is a 1-PS (not only rational). We are done.
\end{proof}

Second, consider $G=\mathrm{Spin}_{2m}(\mathbb{C})$, and $$\Phi:G\xrightarrow{\pi} \mathrm{SO}_{2m}(\mathbb{C}) \overset{\Phi'}{\hookrightarrow} \mathrm{SL}(V)\subset \mathrm{GL}(V),\,\, V:=\mathbb{C}^{\oplus 2m}$$ factors through a finite cover $\pi$, where the Dynkin index $m_{\phi}=2$
for ${\phi: \mathrm{so}_{2m}(\mathbb{C})\rightarrow \mathrm{sl}(V)}$ and $m_{ad}=2(2m-2)$, with respect to the normalized Killing form. Fix a maximal torus $$T=\{\mathrm{diag}(x_1^{-1},x_2^{-1},\cdots, x^{-1}_m, x_1,x_2,\cdots, x_m)\}\subset \mathrm{SO}_{2m}(\mathbb{C})\hookrightarrow \mathrm{SL}(V),$$ and set $B_x\subset \mathrm{SO}_{2m}(\mathbb{C})$ $(x\in I)$ to be $\text{Borel subgroups}$.

Since the finite cover $\pi$ induces an isomorphism of Lie algebras, we denote by $\omega=(\{P_x\}_{x\in I}, {a})$ for both \emph{parabolic $G$-bundles} and \emph{parabolic $\mathrm{SO}_{2m}(\mathbb{C})$-bundles}. Set $\omega_c:=(\{P_x=B_x\}_{x\in I}, \{{a}_x\}_{x\in I})$, ${a}_x=\frac{b_x}{m_{ad}}$,
$$\aligned b_x:\mathbb{G}_m\rightarrow T\subset B_x,\,
k \mapsto (k^{-2m+2},k^{-2m+4},\ldots,1,k^{2m-2},k^{2m-4},\ldots,1), \endaligned$$
and
$\widetilde{\omega}_c:=(\{\widetilde{P}_x\}_{x\in I}, \{\widetilde{{a}}_x\}_{x\in I})$ where $$\Phi'_{\ast}(P_x, {a}_x):=(\widetilde{P}_x,\widetilde{{a}}_x)=(P_{\mathrm{SL}(V)}(\widetilde {a}_x),\Phi'\circ {a}_x).$$ Let $\tau_x=\mathrm{Lie}({a}_x)$ and $\tau'_x=\mathrm{Lie}(\widetilde{{a}}_x)$. By definition, $\theta_{\mathrm{sl}(V)}(\tau'_x)=1$.
Consider the morphism $$\bar{\Phi}:\mathrm{Bun}_G^{A,\vec \tau}(\widehat{C})\rightarrow \mathrm{Bun}_{\mathrm{SO}_{2m}(\mathbb{C})}^{A,\vec \tau}(\widehat{C})\rightarrow \mathrm{Bun}_{\mathrm{SL}(V)}^{A,\vec \tau'}(\widehat{C}),$$
where $\vec \tau_x=\{\bar \tau_x\}_{x\in I}$ with $\tau_x=\frac{\bar \tau_x}{2m-2}$, and $\vec \tau'_x=\{\bar\tau'_x\}_{x\in I}$ with $\tau'_x=\frac{\bar \tau'_x}{2m-2}$. As described in the type $C$ case, there is a finite map
\begin{equation}\label{equation finite map} \psi: \mathcal{M}_{G,\,\omega_c}\cong \mathcal{M}_G^{A,\vec \tau}(\widehat C)
\xrightarrow{\epsilon} \mathcal{M}^{A,\vec \tau}_{\mathrm{SO}_{2m}(\mathbb{C})}(\widehat C)\rightarrow \mathcal{M}^{A,\vec \tau'}_{\mathrm{SL}(V)}(\widehat C),\end{equation}
where $\epsilon$ is affine by a result of Ramanathan (see \cite{Ramana}). However, $\text{Theorem
\ref{correspondence}}$ does not apply here since $\tau'_x\in \Phi_0-\Phi_0^s$ (for $\mathrm{SL}(V)$). To remedy this, we refer to the parahoric subgroup of $\mathrm{GL}(K_x)$ with $K_x=\mathbb{D}^{\ast}_x$ for $x\in I$ ($\text{see \cite{B.C} for the general case}$) as follows.

Let $A_x:=\mathbb{C}[[z]]$, $K_x:=\mathbb{C}((z))$ for $x\in I$, and ${m_r(\widetilde {a}_x):=-\lceil <\widetilde a_x, r> \rceil}$, where $r\in R$ (the root system) and $\lceil\cdot \rceil$ is the ceiling function. Recall that $\pi:\hat C\rightarrow C$ is a Galois cover with a finite Galois group $A$. 
\begin{defn}
The parahoric subgroup $\mathrm{GL}_{\tau'_x}\subset \mathrm{GL}(K_x)$ is $\text{defined as}$ $$\mathrm{GL}_{\tau'_x}:=\langle T(A_x), U_r(z^{m_r(\widetilde {a}_x)}A_x),\,r\in R\rangle\,\,\,\text{for}\,x\in I.$$
\end{defn}
By \cite[Proposition 3.1.1]{B.C}, there is an isomorphism \begin{equation}\label{uniformalization}
{\Tiny \mathrm{Bun}_{\mathrm{GL}(V)}^{A,\vec \tau'}(\widehat{C})\cong \left[\prod_{x\in I} \mathrm{GL}_{\tau'_x}\setminus \prod_{x\in I} \mathrm{GL}(K_x)/ \mathrm{GL}(C-\{x\}_{x\in I}) \right].}
\end{equation}
Since $m_r(\widetilde {a}_x)=1$ for some $r\in R$, the subgroup $\mathrm{GL}_{\tau'_x}\subset \mathrm{GL}(K_x)$ is not contained in $\mathrm{GL}(A_x)$. We will show that it is conjugate to some subgroup of $\mathrm{GL}(A_x)$.

Let $z$ (resp. $\omega$) be the local coordinate of $x$ (resp. $\hat{x}\in \pi^{-1}(x)$) with $z=\omega^{d_x}$ for $d_x:=2m-2$. Let $\gamma$ be a generator of $A_{\hat{x}}$, and $\zeta$ be a primitive $d_x$-th root of unity. Then the isotropy group $A_{\hat{x}}$ acts on $\omega$ via $\gamma\cdot \omega=\zeta\omega$.
Under this setting, $\tau'_x$ corresponds to the homomorphism $$\theta_{\hat{x}}: A_{\hat{x}}\rightarrow T\subset \mathrm{GL}(V),\,\,
\gamma \mapsto \mathrm{diag}(\zeta^{\text{-}(m\text{-}1)},\ldots,\zeta^{\text{-}1},1,1,\zeta^{1},\ldots,\zeta^{m\text{-}1}),
$$ and the group $\mathrm{GL}_{\tau'_x}$ consists of $(a_{ij})_{2m \times 2m}$ satisfying
$${\tiny
a_{2m,1}\in z^{-1}A_x, a_{m,m+1},
a_{ij}\in A_x\,\,\text{for}\, i\geq j \,\text{and}\, a_{ij}\in zA_x\,\,\text{for others}}.$$
\begin{lem}\label{lem 3.5} There is an isomorphism \begin{equation}\label{uniformization}{\Tiny \aligned\left[\prod_{x\in I}\mathrm{GL}_{\tau'_x}\setminus \prod_{x\in I} \mathrm{GL}(K_x)/ \mathrm{GL}(C\text{-}\{x\}_{x\in I}) \right]&\rightarrow \left[\prod_{x\in I}\mathrm{GL}'_{\tau'_x}\setminus \prod_{x\in I} \mathrm{GL}(K_x)/ \mathrm{GL}(C\text{-}\{x\}_{x\in I}) \right] \\
\bar{\Theta}_x&\mapsto \bar{\Theta}_x':=\overline{\mathrm{diag}( \overbrace{z^{-1},\ldots, z^{-1}}^{m-1},\overbrace{1,\ldots, 1}^{m+1})\cdot \Theta_x}\endaligned}\end{equation} where, for $x
\in I$, we have  $ \mathrm{GL}(K_x)\supset\mathrm{GL}'_{\tau'_x}=$
 $$\aligned \mathrm{diag}( \overbrace{z^{-1},\ldots, z^{-1}}^{m-1},\overbrace{1,\ldots, 1}^{m+1})\cdot \mathrm{GL}_{\tau'_x}\cdot \mathrm{diag}( \overbrace{z,\ldots, z}^{m-1},\overbrace{1,\ldots, 1}^{m+1}).\endaligned$$
\end{lem}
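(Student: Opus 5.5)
The map in the statement is left multiplication by a fixed element, so the plan is to show directly that it is a well-defined isomorphism of double-coset stacks. For each $x\in I$ put
\[
t_x:=\mathrm{diag}(\overbrace{z^{-1},\ldots,z^{-1}}^{m-1},\overbrace{1,\ldots,1}^{m+1})\in \mathrm{GL}(K_x),
\]
and let $t=(t_x)_{x\in I}\in\prod_{x\in I}\mathrm{GL}(K_x)$. The map $\Theta=(\Theta_x)\mapsto t\cdot\Theta$ is an automorphism of the ind-scheme $\prod_{x\in I}\mathrm{GL}(K_x)$, with inverse given by left multiplication by $t^{-1}$. Left multiplication commutes with the right action of $\mathrm{GL}(C-\{x\}_{x\in I})$, which is embedded diagonally via Laurent expansions. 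It also intertwines the left action of $\prod_x\mathrm{GL}_{\tau'_x}$ with that of $\prod_x \mathrm{GL}'_{\tau'_x}=\prod_x t_x\mathrm{GL}_{\tau'_x}t_x^{-1}$ through the group isomorphism $h\mapsto t h t^{-1}$:
\[
t\,(h\,\Theta\,\gamma)=(tht^{-1})\,(t\Theta)\,\gamma,\qquad h\in\textstyle\prod_x\mathrm{GL}_{\tau'_x},\ \gamma\in \mathrm{GL}(C-\{x\}_{x\in I}).
\]
So we have an equivariant isomorphism of the spaces together with an isomorphism of the acting groups. This descends to an isomorphism of quotient stacks, and its inverse descends in the same way from $t^{-1}$. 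The descent holds at the level of $T$-points or torsors: a $\prod\mathrm{GL}_{\tau'_x}\times \mathrm{GL}(C-\{x\})$-torsor with an equivariant map to $\prod\mathrm{GL}(K_x)$ is pushed along the conjugation isomorphism. On the level of classes the map is exactly $\bar\Theta_x\mapsto\bar\Theta'_x$.

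The substantive content, and what the surrounding discussion needs, is that $\mathrm{GL}'_{\tau'_x}$ lands inside $\mathrm{GL}(A_x)$. I would therefore also compute the conjugate entrywise. Conjugating by $t_x$ multiplies the entry $a_{ij}$ by $z^{e_i-e_j}$, where $e_i=-1$ for $i\le m-1$ and $e_i=0$ otherwise. Using the description of $\mathrm{GL}_{\tau'_x}$ given above, I would go through the possible index blocks case by case:
\begin{itemize}
\item If $i,j\le m-1$ or $i,j\ge m$, the entry is unchanged and lies in $A_x$.
\item If $i\le m-1<j$, the entry is multiplied by $z^{-1}$. Here $i<j$, so $a_{ij}\in zA_x$, except possibly for the special entries $a_{m,m+1}$ and $a_{2m,1}$, which lie in other blocks.
\item If $j\le m-1<i$, the entry is multiplied by $z$. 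This clears the pole of $a_{2m,1}\in z^{-1}A_x$.
\end{itemize}

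The main obstacle is this bookkeeping. The stated description of $\mathrm{GL}_{\tau'_x}$ (the $\zeta$-exponents with a repeated eigenvalue $1$ in the middle, and $m_r=-\lceil\langle\tilde a_x,r\rangle\rceil$) has to be reconciled with the index blocks. I would rederive the valuation conditions $v(a_{ij})\ge m_{r_{ij}}(\tilde a_x)$ directly from the eigenvalues of $\tilde a_x$, which are $\pm(m-1)/(2m-2),\ldots,0,0$. Then I would check that $e_i-e_j+m_{r_{ij}}\ge 0$ for every pair $(i,j)$, so that $\mathrm{GL}'_{\tau'_x}\subset\mathrm{GL}(A_x)$. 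This inequality is not needed for the isomorphism itself, which is formal.
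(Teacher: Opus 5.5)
Your proposal is correct. It supplies the details the paper omits (the paper says only that the lemma is ``a direct computation using the Uniformization Theorem''): as stated, the isomorphism is formal, because left multiplication by $t=(t_x)_{x\in I}$ commutes with the right action of $\mathrm{GL}(C-\{x\}_{x\in I})$ and intertwines the left actions via $h\mapsto tht^{-1}$, which is exactly how $\mathrm{GL}'_{\tau'_x}$ is defined. Your blockwise check that $\mathrm{GL}'_{\tau'_x}\subset \mathrm{GL}(A_x)$ is also right; when you reconcile it with the root-theoretic formula, note that the paper's matrix description of $\mathrm{GL}_{\tau'_x}$ corresponds to $m_r=\lceil -\langle \widetilde{a}_x,r\rangle\rceil$ (with $r=\epsilon_i-\epsilon_j$ for the $(i,j)$ entry), not to the literal formula $-\lceil\langle \widetilde{a}_x,r\rangle\rceil$.
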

\begin{proof} The proof is a direct computation using the Uniformization  Theorem (see (\ref{uniformalization})). We omit the details.
\end{proof}

Furthermore, after a change of basis,
$\mathrm{GL}'_{\tau'_x}$ becomes $${\Tiny
\begin{pmatrix}
0& I_{m+1}\\
I_{m-1}&0
\end{pmatrix}\mathrm{GL}'_{\tau'_x}\begin{pmatrix}
0& I_{m+1}\\
I_{m-1}&0
\end{pmatrix}=ev_0^{-1}(P'_x)},
$$ where $ev_0: \mathrm{GL}[[z]]\rightarrow \mathrm{GL}(V)$, and the standard parabolic subgroup $P'_x\subset \mathrm{GL}(V)$ is of type $\vec{\,r}'(x)=(2,3,\ldots, m,\widehat{m+1},m+2,\ldots, 2m-1)$.
Let $\vec a'(x)=(0,1,\ldots, 2m-3)$ for $x\in I$. We have
\begin{prop}[see \cite{M.C}]\label{semistable correspondence} There exists an isomorphism $$\mathrm{Bun}_{\mathrm{GL}(V)}^{A,\vec \tau'}(\widehat{C})\cong \mathrm{Bun}_{\mathrm{GL}(V),\{P'_x\}_{x\in I}}.$$
Under the isomorphism, $A$-semistable $($resp. $A$-stable$)$ bundles of degree $e$ on $\widehat{C}$ correspond to {$\omega_c^{(m\text{-}1)}$-semistable} $($resp. {$\omega_c^{(m\text{-}1)}$-stable}$)$ parabolic bundles of degree $e\text{-}(m\text{-}1)|I|$ on $C$, where $\omega_c^{(m\text{-}1)}=(k, \{\vec{\,r}'(x),\vec a'(x)\}_{x\in I})$ and $k=2m-2$.
\end{prop}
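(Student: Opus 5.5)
We would prove Proposition \ref{semistable correspondence} in two stages. First we pass from $A$-equivariant bundles to a double-coset description via the uniformization (\ref{uniformalization}). Then we use Lemma \ref{lem 3.5} and the change of basis to identify the parahoric $\mathrm{GL}_{\tau'_x}$, up to conjugation, with the Iwahori-type group $ev_0^{-1}(P'_x)\subset \mathrm{GL}(A_x)$.

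\emph{Step 1 (stacks).} Composing (\ref{uniformalization}) with the isomorphism of Lemma \ref{lem 3.5} and the constant conjugation by $\begin{pmatrix}0&I_{m+1}\\ I_{m-1}&0\end{pmatrix}$, we obtain
\[
\mathrm{Bun}_{\mathrm{GL}(V)}^{A,\vec\tau'}(\widehat C)\cong\Big[\prod_{x\in I}ev_0^{-1}(P'_x)\setminus\prod_{x\in I}\mathrm{GL}(K_x)/\mathrm{GL}(C-\{x\}_{x\in I})\Big].
\]
By the Beauville--Laszlo/uniformization theorem for quasi-parabolic bundles, the right-hand side is $\mathrm{Bun}_{\mathrm{GL}(V),\{P'_x\}_{x\in I}}$. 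Left multiplication at $x$ by $\mathrm{diag}(z^{-1},\ldots,z^{-1},1,\ldots,1)$, with $m-1$ entries $z^{-1}$, is a Hecke modification. It replaces the lattice $E_x$ by a lattice with colength $m-1$, namely $E'\subset E$ with $E/E'$ of length $m-1$ at each $x$. Hence the degree drops by $(m-1)|I|$, which gives $e\mapsto e-(m-1)|I|$.

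\emph{Step 2 (semistability).} An $A$-equivariant subbundle $\widehat F\subset\widehat E$ corresponds to a parabolic subbundle $F$ of $E$, with $\deg\widehat F=|A|\cdot\mathrm{par}\deg F$ computed with the weights of $\tau'_x$. The parahoric structure has eigenvalues $\frac{1}{2m-2}(-(m-1),\ldots,-1,0,0,1,\ldots,m-1)$. We then transport $F$ through the Hecke modification to a subbundle $F'=F\cap E'$ of the modified bundle. The claim to verify is
\[
\mathrm{par}\deg_{\tau'}(F)=\mathrm{par}\deg_{\omega^{(m-1)}_c}(F')+\mathrm{rk}(F)\cdot c
\]
for a constant $c$ independent of $F$, which should equal $(m-1)|I|/(2m)$ per unit rank. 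Concretely, subtracting $1$ from the $m-1$ negative weights, shifting everything, and reordering cyclically turns the weights into $\frac{1}{k}(0,1,\ldots,2m-3)$ on the flag of type $\vec r'(x)$. The two equal middle weights $0,0$ become the single jump of multiplicity two, which is why $m+1$ is omitted from $\vec r'(x)$. Since the correction is proportional to the rank, slope inequalities are preserved. Therefore $A$-(semi)stability of $\widehat E$ is equivalent to $\omega_c^{(m-1)}$-(semi)stability of $E'$.

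\emph{Main obstacle.} The delicate point is this bookkeeping when $F$ meets the modification: $\deg F'=\deg F-\sum_x\dim\big(F_x/(F_x\cap E'_x)\big)$. We must check that this loss exactly cancels the change of parabolic weights on $F$, coming from the $-1$ shift on the first $m-1$ graded pieces. We would first try to verify this on the graded pieces of the $\tau'_x$-filtration at each $x$. This is standard in the $\mathrm{GL}$ case (as in \cite{M.C}), and we would mainly follow that reference.
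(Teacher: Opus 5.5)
Your Step 1 matches the paper. The paper gives no proof beyond citing \cite{M.C}; the stack isomorphism is what the preceding text sets up, via (\ref{uniformalization}), Lemma \ref{lem 3.5}, and the permutation turning $\mathrm{GL}'_{\tau'_x}$ into $ev_0^{-1}(P'_x)$.

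\textbf{The gap is in Step 2.} Its starting point, $\deg\widehat F=|A|\cdot\mathrm{par}\deg_{\tau'}F$ for subbundles of the pre-Hecke bundle $E$, is not available, for two reasons.
\begin{itemize}
\item The weights of $\tau'_x$ fill the closed interval $[-\frac12,\frac12]$, since $\theta_{\mathrm{sl}(V)}(\tau'_x)=1$. This is exactly the case the paper says Theorem \ref{correspondence} does not cover.
\item $E$ is not determined by $\widehat E$. The group $\mathrm{GL}_{\tau'_x}$ contains the unipotent element with entry $z^{-1}$ in position $(2m,1)$, so it does not preserve $A_x^{2m}$. Hence the lattice you call $E_x$ depends on the coset representative, equivalently on a choice of line in the $2$-dimensional graded piece formed by the weights $\pm\frac12$.
\end{itemize}
What is canonical is $\Lambda_x=\mathrm{diag}(z,\ldots,z,1,\ldots,1)A_x^{2m}$, with $m-1$ entries $z$. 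This lattice is preserved by $\mathrm{GL}_{\tau'_x}$, and Lemma \ref{lem 3.5} moves it to $A_x^{2m}$. It is the local form of the invariant direct image $(\pi_*\widehat E)^A$, with weights $\frac1k(0,0,1,\ldots,m-2,m-1,m-1,m,\ldots,2m-3)\subset[0,1)$.

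So argue from that end. The usual Mehta--Seshadri identity $\deg\widehat F=|A|\cdot\mathrm{par}\deg\,(\pi_*\widehat F)^A$ holds for weights in $[0,1)$. Combined with the bijection between $A$-subbundles of $\widehat E$ and subbundles of $(\pi_*\widehat E)^A$, it gives the (semi)stability correspondence directly, with no Hecke bookkeeping. The Hecke comparison with $E(V)$ is only needed later, in Proposition \ref{thm type D}.

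\textbf{Errors in the Hecke bookkeeping.} If you keep it, for instance to compare with $E(V)$, your predictions would make the verification fail as written.
\begin{itemize}
\item Use $\mathrm{par}\deg=\deg+\sum(\text{weights})$, the convention under which $\deg\widehat E_Q=N\cdot{a}\text{-}\mathrm{deg}(E_Q)$. Passing to a sublattice of colength $m-1$ along the negative-weight directions is compensated only if those $m-1$ weights go \emph{up} by $1$, to $\frac1k(m-1,\ldots,2m-3)$.
\item No further global shift is then needed, and $\mathrm{par}\deg(F')=\mathrm{par}\deg_{\tau'}(F)$ exactly. So $c=0$, not $(m-1)|I|/(2m)$: the total weight per point rises from $0$ to $m-1$, exactly offsetting $\deg E'=\deg E-(m-1)|I|$.
\item With your $-1$ shift, the discrepancy is $2\cdot\#\{\text{negative weights of }F\}$ per point. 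This is not proportional to $\mathrm{rk}(F)$, so slope comparisons would not be preserved.
\item $\vec r'(x)$ has two jumps of multiplicity two, not one. Besides the pair $0,0$, the old top weight $\frac{m-1}{k}=\frac12$ collides with the shifted bottom weight $-\frac12+1$. This is why $\vec r'(x)$ starts at $2$ as well as omitting $m+1$.
\end{itemize}
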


\begin{prop}\label{thm type D} Assume that $2m\mid |I|\gg 0$ such that $$\mathrm{Codim}(\mathrm{Bun}_{G,\,\omega_c}\setminus \mathrm{Bun}^{rs}_{G,\,\omega_c})\geq 2\,\,\,\,(\text{see Proposition \ref{prop codim semistable and stable parabolic} $\&$ \ref{prop codimension regularly stable}}).$$ Then the moduli space $\mathcal{M}_{G,\, \omega_c}$
is {Fano} when $G=\mathrm{Spin}_{2m}(\mathbb{C})$.
\end{prop}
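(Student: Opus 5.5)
The plan mirrors the proof of Proposition \ref{thm type C}, with the parahoric correction of Lemma \ref{lem 3.5} and Proposition \ref{semistable correspondence} replacing the direct use of Theorem \ref{correspondence} on the $\mathrm{SL}(V)$ side. By the codimension hypothesis, and since $\mathcal{M}_{G,\omega_c}$ is normal and $\pi_G:\mathrm{Bun}^{rs}_{G,\omega_c}\to\mathcal{M}^{rs}_{G,\omega_c}$ is a $Z(G)$-gerbe, it suffices to produce an ample line bundle $\Theta_{\mathcal{M}_{G,\omega_c}}$ on $\mathcal{M}_{G,\omega_c}$ whose pullback to $\mathrm{Bun}^{rs}_{G,\omega_c}$ is $\omega^{-1}_{\mathrm{Bun}^{rs}_{G,\omega_c}}=\sD_{\mathrm{Ad}}\otimes\bigotimes_{x\in I}\sL(2\rho)$ (Lemma \ref{lem anti-canonical}(3)). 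Equality then extends from the regularly stable locus to all of $\mathcal{M}_{G,\omega_c}$, since the complement has codimension $\geq 2$.

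\textbf{Step 1: the ample bundle.} Via the uniformization (\ref{uniformalization}), Lemma \ref{lem 3.5} and the change of basis, I identify $\mathcal{M}^{A,\vec\tau'}_{\mathrm{GL}(V)}(\widehat C)$ (degree $0$) with the moduli space of $\omega_c^{(m-1)}$-semistable parabolic $\mathrm{GL}(V)$-bundles of degree $-(m-1)|I|$ and quasi-parabolic type $\{P'_x\}$ (Proposition \ref{semistable correspondence}). The weights $\vec a'(x)=(0,1,\dots,2m-3)$ satisfy $a'_{\rm max}<k=2m-2$, so this is an honest parabolic moduli space. On it, the analogue of Lemma \ref{lem 3.2} (Theorem 3.1 of \cite{Su3}) gives an ample theta bundle. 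Its pullback to the stack is $\mathrm{det}\mathrm{R}pr_{2*}\sE''(V)^{-k}\otimes\bigotimes_x\sL(\chi'_x)\otimes(\det\text{ correction})$, where $\sE''$ is the modified (Hecke-transformed) bundle. The divisibility $2m\mid |I|$ is exactly what makes the degree normalization $-(m-1)|I|$ compatible with this construction. I then set $\Theta_{\mathcal{M}_{G,\omega_c}}:=\psi^*(\cdot)$ along the finite map (\ref{equation finite map}). Pullback under a finite map keeps it ample; I restrict to the image of the determinant-trivial locus, or compose with the determinant-fixed sublocus.

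\textbf{Step 2: computing the pullback.} This is the main obstacle. The bundle $\sE''$ is the elementary transformation of $\sE_{\mathrm{SL}(V)}$ at each $x$ along the $(m-1)$-dimensional quotient determined by the flag $\widetilde S_x$. Hence
\[
\mathrm{det}\mathrm{R}pr_{2*}\sE''(V)=\mathrm{det}\mathrm{R}pr_{2*}\sE(V)\otimes\bigotimes_x(\text{det of a flag piece}).
\]
Combining this with \cite[Theorem 5.4]{KNR}, in the form $\sD_{\mathrm{Ad}}^{m_\phi}=\mathrm{detR}pr_{2*}\sE(V)^{-m_{ad}}$ with $m_\phi=2$ and $m_{ad}=2(2m-2)$, reduces everything to a character identity at each $x$ for the restriction to $B_x$. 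I would verify this on the torus, as in the type $C$ proof: for every $1$-PS $\lambda'$,
\[
(\text{total character})(\lambda')=m_{ad}(a_x,\lambda')_{\mathfrak g}=2\rho(\lambda'),
\]
using that $b_x$ is $2\rho^\vee$ for $D_m$ (coordinates $2m-2,2m-4,\dots,0$). The contribution of the Hecke modification, $-(z^{-1})$ on $m-1$ coordinates, should exactly compensate the failure $\theta_{\mathrm{sl}(V)}(\tau'_x)=1$. The bookkeeping of powers (taking a suitable power of $\Theta$ and possibly squaring, given $m_\phi=2$) is the delicate part, and I would check it first on $\mathrm{SO}_4$.

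Finally, the isomorphism holds on the gerbe $\mathrm{Bun}^{rs}_{G,\omega_c}$, so it descends to $\mathcal{M}^{rs}_{G,\omega_c}$. This gives $\omega^{-1}_{\mathcal{M}_{G,\omega_c}}\cong\Theta_{\mathcal{M}_{G,\omega_c}}$, which is ample, and $\mathcal{M}_{G,\omega_c}$ is Fano.
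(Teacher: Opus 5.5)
Your proposal is correct and follows essentially the paper's route. The paper likewise uses the finite map $\psi$ to $\mathcal{M}_{\mathrm{GL}(V),\omega_c^{(m-1)}}$ from Proposition \ref{semistable correspondence}, pulls back the ample theta bundle, compares $\Theta(\sE'')$ with the $\widetilde{\omega}_c$-type bundle $\Theta(\sE')$ through the $(m-1)$-fold Hecke transformation of Lemma \ref{lem 3.5}, and then repeats the type $C$ computation with \cite[Theorem 5.4]{KNR} and $m_{ad}\chi_{a_x}=2\rho$. The power bookkeeping you flag closes without any squaring: $k=2m-2$ both makes $k\chi_{\widetilde{a}_x}$ integral and gives $k\,m_{\phi}=m_{ad}$, which is exactly what fails in type $B$ (Remark \ref{rmk type B}).
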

\begin{proof} Recall that, by (\ref{equation finite map}) and Proposition \ref{semistable correspondence}, there is a finite map $$\psi:\mathcal{M}_{G,\, \omega_c}\rightarrow \mathcal{M}^{A,\vec \tau'}_{\mathrm{GL}(V)}(\hat C)\cong \mathcal{M}_{\mathrm{GL}(V),\, \omega_c^{(m\text{-}1)}}.$$ Analogously to Lemma \ref{lem 3.2}, there is an ample line bundle $\Theta_{\mathcal{M}_{\mathrm{GL}(V)},\omega_c^{(m\text{-}1)}}$ such that $\pi^{\ast}_{\mathrm{GL}(V)}\Theta_{\mathcal{M}_{\mathrm{GL}(V)},\omega_c^{(m\text{-}1)}}=$ $$\mathrm{det}\mathrm{R}pr_{2
\ast} \sE''(V)^{\text{-}(2m\text{-}2)}\otimes \bigotimes_{x\in I}\bigotimes_{i=1}^{l_x}\mathrm{det}(\mathcal{Q})^{d'_i(x)}_{x,i}:=\Theta({\sE''}),$$
where $\pi_{\mathrm{GL}(V)}:\mathrm{Bun}^{rs}_{\mathrm{GL}(V),\, \omega_c^{(m\text{-}1)}}\rightarrow \mathcal{M}^{rs}_{\mathrm{GL}(V),\,\omega_c^{(m\text{-}1)}}$, $\sE''(V)$ is the universal bundle on $C\times \mathrm{Bun}_{\mathrm{GL}(V),\,\omega_c^{(m\text{-}1)}}$, and $$\sE''(V)_x\twoheadrightarrow \sQ_{x,l_x}\twoheadrightarrow \cdots \twoheadrightarrow \sQ_{x,1}\twoheadrightarrow 0 \,\,\text{of type $\vec{\,r}'(x)$}$$
are universal quotients on $\mathrm{Bun}_{\mathrm{GL}(V),\,\omega_c^{(m\text{-}1)}}$, ${\Small \omega_c^{(m\text{-}1)}=(k,\{\vec{\,r}'(x),\vec a'(x)\}_{x\in I})}$.

Now, we go to show that $\omega^{-1}_{\mathcal{M}_{G,\omega_c}}=\psi^{\ast}{\Theta_{\mathcal{M}_{\mathrm{GL}(V)},\omega_c^{(m\text{-}1)}}}$
by making a modification of the argument in the proof of Proposition \ref{thm type C}.
Let $\mathrm{Bun}_C(2m,0,\widetilde{\omega}_c)$ be the stack of parabolic bundles (of rank $2m$ and $\text{degree $0$}$) on $C$, and $$\{H_x^{(m\text{-}1)}\}_{x\in I}:\mathrm{Bun}_C(2m,0,\widetilde{\omega}_c)\rightarrow \mathrm{Bun}_C(2m,-(m-1)|I|,\,\omega_c^{(m\text{-}1)})$$ be the $(m-1)$-times Hecke transformation at $\{x\}_{x\in I}$ (see \cite{sz2}). We {\bf claim} that there is a commutative diagram
$${\tiny
\CD
  \mathrm{Bun}_{G,\,\omega_c} @>\bar \Phi >> \overset{\mathrm{Bun}_C(2m,0,\widetilde \omega_c)}{(\subset \mathrm{Bun}_{\mathrm{GL}(V),\widetilde{\omega}_c})} @>\{H_x^{(m\text{-}1)}\}_{x\in I}>> \mathrm{Bun}_C(2m,-|I|(m-1),\,\omega_c^{(m-1)}) @. \\
  @V\wr VV @.\ @V \cap VV  \\
  \mathrm{Bun}_{G}^{A,\vec \tau}(\widehat{C}) @>\hat{\Phi}>> \mathrm{Bun}_{\mathrm{GL}(V)}^{A,\vec \tau'}(\widehat{C})@>\,\,\,\,\,\,\,\,\ \ \ \sim \ \ \ \,\,\,\,\,\,\,\,\,>> \,\mathrm{Bun}_{\mathrm{GL}(V),\{P_x'\}_{x\in I}},
\endCD}
$$ where $\{H_x^{(m\text{-}1)}\}_{x\in I}^{\ast}\Theta(\sE'')=\Theta(\sE')$ by a direct computation, and $\sE'$ is the universal bundle on $C\times \mathrm{Bun}_C(2m,0,\widetilde \omega_c) $.
If the claim holds, then on $\mathrm{Bun}^{rs}_{G,\,\omega_c}$, $$\aligned \pi_{G}^{\ast}\psi^{\ast}\Theta_{\mathcal{M}_{\mathrm{GL}(V),\omega_c^{(m\text{-}1)}}}^{{m_{ad}}}=&\bar \Phi^{\ast} \Theta(\sE')^{m_{ad}}\\
=\bar{\Phi}^{\ast}(\mathrm{detR}pr_{2\ast}\sE'(V))^{-km_{ad}}\otimes \underset{x\in I}{\bigotimes}\, &\bar{\Phi}^{\ast}\sL(k\chi_{\widetilde{{a}}_x})^{m_{ad}},\,\,k=2m-2 \endaligned$$ (see Example \ref{ex 2.7}). This implies the result.

Finally, the claim is true, since $\bar \Phi\circ\{H^{(m\text{-}1)}_x\}_{x\in I}$ is given by $${\tiny \aligned\left[\prod_{x\in I}\mathcal{P}_{\tau_x}(K_x)\setminus \prod_{x\in I} G(K_x)/ G(C-\{x\}_{x\in I}) \right]&\rightarrow \left[\prod_{x\in I}\mathrm{GL}'_{\tau'_x}\setminus \prod_{x\in I} \mathrm{GL}(K_x)/ \mathrm{GL}(C-\{x\}_{x\in I}) \right] \\
\bar{g}_x&\mapsto \overline{\mathrm{diag}( \overbrace{z^{-1},\ldots, z^{-1}}^{m-1},\overbrace{1,\ldots, 1}^{m+1})\cdot \hat{g}_x}\endaligned}$$
(see Lemma \ref{lem 3.5}), where $\hat g_x\in \mathrm{GL}(K_x)$ is the image of $ g_x$ under the morphism ${G(K_x)\rightarrow \mathrm{GL}(K_x)}$ and $\sP_{\tau_x}(K_x):=ev_0^{-1}(P_x)\subset G(A_x)$.
\end{proof}
\begin{rmk}\label{rmk type B} Similarly, for $G=\mathrm{Spin}_{2m+1}(\mathbb{C})$ and $2m+1\mid |I|\gg 0$, we can only show that   $\omega^{-2}_{\mathcal{M}_{G,\omega_c}}$ is an ample line bundle. The reason is that $m_{\phi}=2$ for the standard representation and $m_{ad}=2(2m-1)$, but $(2m-1)\chi_{\widetilde{{a}}_x}$ is not a character $-$ only a rational one.
\end{rmk}

\section{Fano type of moduli spaces}
A normal projective variety $X$ over $\mathbb{C}$ is of Fano type, if there exists an effective $\mathbb{Q}$-divisor $\Delta$ such that $(X,\Delta)$ is klt and $-(K_X+\Delta)$ is nef and big (see \cite[Lemma-Definition 2.6]{PS}). Our goal in this section is to show
\begin{thm}\label{thm Fano type of moduli space} Let $C$ be a complex smooth projective curve of genus $g\geq 3$, and $\mathcal{M}_{G,\omega}$ be the moduli space of $\omega$-semistable {\em parabolic $G$-bundles} on $C$.
Then $\mathcal{M}_{G,\omega}$ is of Fano type when $G$ is a classical simple and simply connected group.
\end{thm}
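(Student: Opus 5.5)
Following the strategy announced in the introduction, I will reduce the statement to the canonical data $\omega_c$, where Section \ref{section fano} gives positivity of the anti-canonical bundle. Then I will transport Fano type to arbitrary $\omega$ using the codimension estimates of Section \ref{subsection codimension}.

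\emph{Step 1: reduction to full flags and many points.} By Lemma \ref{thm forgetting map of stack}(1),(2), adding marked points, or refining $P_x$ to a Borel $B_x$, changes $\mathrm{Bun}_{G,\omega}$ by a locally trivial bundle with flag-variety fibres. I will choose an enlarged set $I'\supset I$ with $2m\mid |I'|\gg0$ (respectively $2m+1\mid |I'|$ in type $B$) and Borel subgroups at every point. The enlarged data $\omega'$ is taken with weights very small at the new points and small perturbations in the refined directions. I will then compare $\mathcal{M}_{G,\omega}$ with $\mathcal{M}_{G,\omega'}$: for sufficiently small perturbations, semistability for $\omega'$ implies semistability for $\omega$. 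This gives a morphism $\mathcal{M}_{G,\omega'}\to\mathcal{M}_{G,\omega}$ with connected fibres, and I expect Fano type to descend along such a contraction of Mori-dream type (images of Fano type varieties under contractions are of Fano type, by Prokhorov--Shokurov style results). So it suffices to treat Borel data with $|I|$ large.

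\emph{Step 2: the case $\omega_c$.} With $|I|$ large, Proposition \ref{prop codim semistable and stable parabolic} and Proposition \ref{prop codimension regularly stable} give $\mathrm{Codim}(\mathrm{Bun}_{G,\omega_c}\setminus\mathrm{Bun}^{rs}_{G,\omega_c})\ge2$. For types $C$ and $D$, Propositions \ref{thm type C} and \ref{thm type D} then say that $\mathcal{M}_{G,\omega_c}$ is Fano. In type $B$, Remark \ref{rmk type B} gives $-2K$ Cartier and ample. Since $\mathcal{M}_{G,\omega_c}$ has rational singularities (Theorem \ref{thm existence of moduli space}) and $K$ is $\mathbb{Q}$-Cartier, the singularities are klt. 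Hence $(\mathcal{M}_{G,\omega_c},0)$ is klt with $-K$ ample, so it is of Fano type.

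\emph{Step 3: passing to general Borel data $\omega$.} Here I use $g\ge3$. By Remark \ref{rmk codim stable principal}(2) and Remark \ref{codim regularly stable principal}, both $\mathrm{Bun}_{G,\omega}\setminus\mathrm{Bun}^{rs}_{G,\omega}$ and $\mathrm{Bun}_{G,\omega_c}\setminus\mathrm{Bun}^{rs}_{G,\omega_c}$ have codimension $\ge2$. Therefore the two moduli spaces share a common big open set, namely the image of $\mathrm{Bun}^{rs}_{G,\omega}\cap\mathrm{Bun}^{rs}_{G,\omega_c}$, whose complement in each space has codimension $\ge2$. So $\mathcal{M}_{G,\omega}$ and $\mathcal{M}_{G,\omega_c}$ are isomorphic in codimension one, i.e.\ small birational models of each other.

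Both are GIT-type quotients whose polarizations come from $\mathrm{Pic}(\mathrm{Bun}_{G,\{B_x\}})$, described in (\ref{4.1}). I would therefore argue that $\mathcal{M}_{G,\omega_c}$, being of Fano type, is a Mori dream space. Every small $\mathbb{Q}$-factorial modification, and more generally every model $\mathrm{Proj}$ of a section ring of a movable divisor, is then again of Fano type. Concretely, take $\Delta$ on $\mathcal{M}_{G,\omega}$ to be the strict transform of a general small multiple of the ample $\Theta_\omega$ pulled back from $\mathcal{M}_{G,\omega}$. Then $-(K+\Delta)$ on $\mathcal{M}_{G,\omega}$ is the strict transform of $-K_{\omega_c}-\epsilon D$, which remains big. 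Running the relevant MMP on the Fano type pair upstairs lands on $\mathcal{M}_{G,\omega}$ with klt and nef-and-big conditions preserved.

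\textbf{Main obstacle.} I expect the hardest part to be this last transfer, in particular making rigorous that $\mathcal{M}_{G,\omega}$ appears as the ample model of a movable divisor on a Fano type small modification of $\mathcal{M}_{G,\omega_c}$. I would try first to identify $\mathcal{M}_{G,\omega}$ with $\mathrm{Proj}\bigoplus_k H^0(\mathrm{Bun}_{G,\omega_c}^{ss},\Theta_\omega^{k})$, using the codimension-two extension of sections on the common regularly stable locus. Then I would apply \cite[Lemma-Definition 2.6]{PS} together with the Mori dream space property of Fano type varieties.
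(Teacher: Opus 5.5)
Your Steps 1 and 3 follow the paper's overall structure. Step 1 is the paper's Step II: Fano type descends along $\mathcal{M}_{G,\omega_J}\to\mathcal{M}_{G,\omega}$ by \cite[Corollary 5.2]{F.T}. Step 3 is the transfer along an isomorphism in codimension one, valid for $g\ge 3$. Step 2, however, has a genuine gap at the sentence ``since $\mathcal{M}_{G,\omega_c}$ has rational singularities and $K$ is $\mathbb{Q}$-Cartier, the singularities are klt.''

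That implication is false. Take the normal surface singularity whose minimal resolution is a $(-4)$-curve meeting four disjoint $(-2)$-curves. It is rational: its fundamental cycle is the reduced exceptional divisor, which has arithmetic genus $0$. It is $\mathbb{Q}$-factorial, as every rational surface singularity is. Yet the central curve has discrepancy $-1$, so the singularity is log canonical but not klt.

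What is true is that rational singularities together with $K$ \emph{Cartier} give canonical singularities (Elkik, Koll\'ar--Mori). This rescues types $C$ and $D$, because Propositions \ref{thm type C} and \ref{thm type D} make $\omega_{\mathcal{M}_{G,\omega_c}}$ invertible. In type $B$, however, Remark \ref{rmk type B} only gives $\omega^{-2}$ invertible. The index may therefore be $2$, and nothing in your argument rules out log canonical, non-klt points.

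The paper gets the klt property from the quotient structure rather than from rationality. It perturbs $\omega_c$ to generic Borel data $\bar\omega$ with $\theta_{\mathrm{sl}(V)}(\bar\tau'_x)<1$. Then $\mathcal{M}_{G,\bar\omega}=\mathfrak{F}^s_{\Phi\text{-}\mathrm{FlPsBun}}//\mathrm{GL}(Y)$ is a reductive quotient of a smooth scheme, hence of klt type by \cite[Theorem 5]{Lukas}. The natural morphism $f:\mathcal{M}_{G,\bar\omega}\to\mathcal{M}_{G,\omega_c}$ is small. So $K_{\mathcal{M}_{G,\bar\omega}}=f^{\ast}K_{\mathcal{M}_{G,\omega_c}}$ is $\mathbb{Q}$-Cartier, the space is klt, and its anticanonical divisor $f^{\ast}(-K_{\mathcal{M}_{G,\omega_c}})$ is nef and big. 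You need this, or some other proof of klt type, to finish Step 2 in all types.

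Separately, the ``main obstacle'' you flag in Step 3 is easier than your sketch suggests, and your proposed $\Delta$ only gives bigness of $-(K+\Delta)$, not nefness. By \cite[Lemma-Definition 2.6]{PS}, choose a big $\Delta$ on $X$ with $(X,\Delta)$ klt and $K_X+\Delta\sim_{\mathbb{Q}}0$. If $Y$ is isomorphic to $X$ in codimension one, the strict transform $\Delta_Y$ is big and satisfies $K_Y+\Delta_Y\sim_{\mathbb{Q}}0$. The negativity lemma on a common resolution shows the two pairs are crepant, so $(Y,\Delta_Y)$ is klt. The paper instead uses Lemma \ref{lem well known}, via \cite{GOST}.
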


To simplify notation, we assume from now on that $G$ is a classical simple and simply connected group.

\begin{defn}\label{defn cox} Let $X$  be a normal projective variety with finitely generated $\mathrm{Cl}(X)$, and $\Gamma$ be a finitely generated group of Weil divisors on $X$ such that $\Gamma_{\mathbb{Q}}\xrightarrow{\sim} \mathrm{Cl}(X)_{\mathbb{Q}}$. We say that $X$ is called a Mori dream space if the Cox ring
$$\mathrm{Cox}(X):=\bigoplus_{D\in \Gamma}\mathrm{H}^0(X,\sO_X(D))$$
is a $\mathbb{C}$-algebra of finite type. When $X$ is $\mathbb{Q}$-factorial, this coincides with the usual notion of a Mori dream space (see \cite[Proposition 2.9]{HK}).
\end{defn}

\begin{lem}\label{lem well known}
Let $X$ and $Y$ be two normal projective varieties over $\mathbb{C}$ that are isomorphic in codimension one. If $X$ is of Fano type,
then so is $Y$.
\end{lem}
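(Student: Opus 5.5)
The plan is to use the characterization of Fano type in terms of log Calabi--Yau pairs with big boundary, which is part of \cite[Lemma-Definition 2.6]{PS}. By that characterization, a normal projective variety $Z$ is of Fano type if and only if there is an effective $\mathbb{Q}$-divisor $\Delta$ on $Z$ such that $(Z,\Delta)$ is klt, $K_Z+\Delta\sim_{\mathbb{Q}}0$, and $\Delta$ is big. The reason for working with this version is that its conditions are stated in terms of Weil divisor classes and discrepancies, and both should be insensitive to a birational map that is an isomorphism in codimension one. So I fix the birational map $f\colon X\dashrightarrow Y$, an isomorphism $U_X\cong U_Y$ between open subsets whose complements have codimension $\geq 2$. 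I take such a $\Delta$ on $X$ and let $\Delta_Y:=f_*\Delta$ be its strict transform.

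\textbf{Step 1 (linear equivalence and bigness).} Restriction to $U_X\cong U_Y$ identifies $\mathrm{Cl}(X)_{\mathbb{Q}}$ with $\mathrm{Cl}(Y)_{\mathbb{Q}}$. Under this identification $K_X\mapsto K_Y$ and $\Delta\mapsto\Delta_Y$. Moreover, $H^0(X,\sO_X(D))=H^0(Y,\sO_Y(f_*D))$ for every Weil divisor $D$, since sections of reflexive rank one sheaves are determined on the big open set. Consequently:
\begin{itemize}
\item $K_Y+\Delta_Y\sim_{\mathbb{Q}}0$; in particular it is $\mathbb{Q}$-Cartier.
\item $\Delta_Y$ is big, because $h^0(m\Delta_Y)=h^0(m\Delta)$ grows like $m^{\dim X}$.
\end{itemize}

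\textbf{Step 2 (klt).} I take a common log resolution $p\colon W\to X$, $q\colon W\to Y$ with $f\circ p=q$, and write
$$K_W=p^*(K_X+\Delta)+E_X=q^*(K_Y+\Delta_Y)+E_Y,$$
normalized so that $p_*E_X=-\Delta$ and $q_*E_Y=-\Delta_Y$. Since both pullbacks are $\sim_{\mathbb{Q}}0$, the difference $D:=E_X-E_Y$ is $\mathbb{Q}$-linearly trivial, and in particular numerically trivial over $X$.

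Next I look at the components of $D$. A prime divisor on $W$ that is not $p$-exceptional maps to a divisor meeting $U_X$, and there $f$ is an isomorphism. Hence the coefficients of that divisor in $E_X$ and $E_Y$ agree. So $D$ is $p$-exceptional and $p$-numerically trivial, and the negativity lemma, applied to both $D$ and $-D$, gives $D=0$. Thus the discrepancies of $(X,\Delta)$ and $(Y,\Delta_Y)$ coincide for every divisor over them, and $(Y,\Delta_Y)$ is klt.

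\textbf{Step 3 (back to the definition).} From Steps 1 and 2, $Y$ carries a klt pair with $K_Y+\Delta_Y\sim_{\mathbb{Q}}0$ and $\Delta_Y$ big. Citing the equivalence in \cite{PS} finishes the proof. If one prefers to verify the direction ``klt log Calabi--Yau with big boundary $\Rightarrow$ Fano type'' directly, the argument is a perturbation. Choose an ample Cartier divisor $H$ on $Y$. Bigness gives $m\Delta_Y\sim H+E$ with $E\geq 0$ for $m\gg0$. Then put $\Delta':=(1-\epsilon)\Delta_Y+\frac{\epsilon}{m}E$. It satisfies $-(K_Y+\Delta')\sim_{\mathbb{Q}}\frac{\epsilon}{m}H$, which is ample, and $(Y,\Delta')$ stays klt for $0<\epsilon\ll1$.

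The step I expect to need the most care is this last perturbation on the possibly non-$\mathbb{Q}$-factorial variety $Y$. There $E$ is only a Weil divisor, so one must check that $K_Y+\Delta'$ is $\mathbb{Q}$-Cartier. This holds because $K_Y+\Delta'=(K_Y+\Delta_Y)-\frac{\epsilon}{m}H$ up to $\mathbb{Q}$-linear equivalence. One must also check that klt is an open condition in $\epsilon$ in this setting, which follows by computing discrepancies on a fixed log resolution of $(Y,\Delta_Y+E)$.
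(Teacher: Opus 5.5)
Your proof is correct, but it takes a different route from the paper. The paper argues through Cox rings. By \cite[Corollaries 5.3, 5.4]{GOST}, a normal projective variety is of Fano type exactly when it is a Mori dream space of globally F-regular type. Since $X$ and $Y$ share an open set $U$ whose complements have codimension $\geq 2$, one has $\mathrm{Cl}(X)=\mathrm{Cl}(U)=\mathrm{Cl}(Y)$ and $\mathrm{Cox}(X)=\mathrm{Cox}(U)=\mathrm{Cox}(Y)$, so the criterion passes from $X$ to $Y$.

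You instead use the Prokhorov--Shokurov characterization by klt log Calabi--Yau pairs with big boundary. You transport the boundary by strict transform and show that discrepancies do not change, because $E_X-E_Y$ is $p$-exceptional and $\mathbb{Q}$-linearly trivial. The negativity lemma handles this; even more directly, if $kD=\mathrm{div}(\phi)$, then pushing forward gives $\mathrm{div}_X(\phi)=0$, so $\phi$ is constant and $D=0$.

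The paper's argument is two lines and fits its later use of Cox rings, but it rests on the deep results of \cite{GOST}, applied in the non-$\mathbb{Q}$-factorial setting. Yours uses only standard birational geometry. One ingredient you use implicitly deserves a mention: in Step 3, Kodaira's lemma is needed for big Weil divisors on a normal variety, since $\Delta_Y$ need not be $\mathbb{Q}$-Cartier. Your argument also proves more. It works verbatim when $X\dashrightarrow Y$ is only a birational contraction, meaning the inverse extracts no divisors. In that setting Cox rings are no longer preserved, so the paper's argument would not apply.
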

\goodbreak
\begin{proof}
Since $X$ is of Fano type, it follows from Corollary 5.3 $\&$ 5.4 of \cite{GOST} that $X$ is a Mori dream space of globally F-regular type. Moreover, there is a common open subset $U\subset X,Y$ of codimension at least two. Thus, $Y$ is also a Mori dream space of globally F-regular type, due to that
$$\mathrm{Cl}(X)=\mathrm{Cl}(U)=\mathrm{Cl}(Y),\,\,\, \mathrm{Cox}(X)=\mathrm{Cox}(U)=\mathrm{Cox}(Y).$$ Then the result follows from  \cite[Corollary 5.4]{GOST}.
\end{proof}

We remark that when $\mathrm{Codim}(\mathrm{Bun}_{G,\omega}\setminus \mathrm{Bun}_{G,\omega}^{rs})\geq 2$,
\begin{equation}\label{equation free}\mathrm{Cl}(\mathcal{M}_{G,\omega})=\mathrm{Cl}(\mathcal{M}^{rs}_{G,\omega})=\mathrm{Pic}(\mathcal{M}^{rs}_{G,\omega})
\hookrightarrow \mathrm{Pic}(\mathrm{Bun}_{G,\omega})\end{equation} is a free abelian group of finite rank.
Set $\mathrm{Cl}(\mathcal{M}_{G,\omega}):=\underset{1\leq i\leq s}{\oplus}\mathbb{Z}\cdot D_i$. We have
\begin{equation}\label{equation cox}
\mathrm{Cox}(\mathcal{M}_{G,\omega})=\bigoplus_{\Tiny (n_1,\ldots,n_s)\in \mathbb{Z}^{\oplus s}}\mathrm{H}^0(\mathcal{M}_{G,\omega},\sO(n_1D_1+\cdots +n_s D_s)).
\end{equation}

\begin{proof}[Proof of Theorem \ref{thm Fano type of moduli space}]
The argument is divided into two steps.

{\bf Step I}: We consider the case $\omega=(\{B_x\}_{x\in I},{a})$, where $B_x\subset G$ are Borel subgroups such that $$(\ast)\,\,\,\,\,\,\,\,\,
\aligned &2m\mid |I| \,\,\,\,\,\,\,\, \,\,\,\,\,\,\text{for}\,\, G=\mathrm{Sp_{2m}}(\mathbb{C}),\mathrm{Spin}_{2m}(\mathbb{C}), \\ \,\, &2m+1\mid |I|\,\,\,\, \text{for}\,\, G=\mathrm{Spin}_{2m+1}(\mathbb{C}).\endaligned$$ 

Note that the $\mathbb{Q}$-Cartier divisor $-K_M$ of 
$$M:=\mathcal{M}_{G,\,\omega_c},\quad \omega_c=(\{B_x\}_{x\in I},\{{a}_x\}_{x\in I})$$ 
is ample by Proposition \ref{thm type C} $\&$ \ref{thm type D} and $\text{Remark \ref{rmk type B}}$. We {\bf claim} that there exists a moduli space $M':=\mathcal{M}_{G,\bar \omega}$ with klt singularities, and a birational morphism $f:M'\rightarrow M$ that is isomorphic in codimension one. If the claim holds, then $M'$ is of Fano type since $-K_{M'}=f^{\ast}(-K_M)$ is nef and big. Furthermore,  $\mathcal{M}_{G,\omega}$ is of Fano type by Lemma \ref{lem well known}, due to
$$ \mathrm{Codim}(\mathrm{Bun}_{G,\omega}\setminus \mathrm{Bun}_{G,\omega}^{s})\geq 2         \,\,\,\,\text{(when\,$g\geq 3$,\, see Remark \ref{rmk codim stable principal})}$$ for arbitrary parabolic data $\omega$.

To prove the claim, let $\bar \omega=(\{B_x\}_{x\in I}, \{\bar {a}_x\}_{x\in I})$, where $\{\bar {a}_x\}_{x\in I}$
is generic after a minor modification of $\{{a}_x\}_{x\in I}$ and $\theta_{\mathrm{sl}(V)}(\bar \tau_x')<1$ (see Section \ref{section fano}) for $x\in I$. Then there is a birational morphism $$f:\mathcal{M}_{G,\bar \omega}:=M'\rightarrow M:=\mathcal{M}_{G,\,\omega_c},$$ 
and it suffices to show that $M'$ has klt singularities. Recall that
$$\mathcal{M}_{G,\bar \omega}=\mathfrak{F}^s_{\Phi\text{-}\mathrm{FlPsBun}}//\mathrm{GL(Y)},\,\,\,\, \mathrm{Bun}^s_{G,\bar \omega}=[\mathfrak{F}^s_{\Phi\text{-}\mathrm{FlPsBun}}/\mathrm{GL(Y)}]$$ (see \cite[\S 5.7]{Heinloth}). It follows that any open affine $U\subset \mathcal{M}_{G,\bar \omega}$ is of klt type by $\text{\cite[Theorem 5]{Lukas}}$. Thus $U$ (hence $M'$) has klt singularities since $-K_{M'}$ is $\mathbb{Q}$-Cartier, and the singular locus has codimension at least two, by $$\mathrm{Codim}(\mathrm{Bun}_{G,\bar \omega}\setminus \mathrm{Bun}^{rs}_{G, \bar \omega})\geq 2\,\,\,\text{(see Remark \ref{codim regularly stable principal})}.$$

{\bf Step II}: We consider the general case $\omega=(\{P_x\}_{x\in I},{a})$, where each $P_x\subset G$
is a parabolic subgroup. Fix one Borel subgroup $B_x\subset G$ for $x\in I\cup J$ ($B_x\subset P_x$ for $x\in I$). Then there is a natural morphism 
$$\mathrm{Bun}_{G,\{B_x\}_{x\in I\cup J}}\rightarrow \mathrm{Bun}_{G,\{P_x\}_{x\in I}}\,\,\text{(see Lemma \ref{thm forgetting map of stack})}.$$
Set $\omega_J:=(\{B_x\}_{x\in I\cup J}, b)$, where $|I\cup J|$ satisfies $(\ast)$, and $b$ is generic after a minor modification of ${a}\cup \{0\}_{x\in J}$. This induces a surjective morphism $\mathcal{M}_{G,\,\omega_J}\rightarrow \mathcal{M}_{G,\omega}$. Since $\mathcal{M}_{G, \omega_J}$ is of Fano type, we are done by \cite[Corollary 5.2]{F.T}.
\end{proof}

\begin{cor}\label{cor cox ring} When $g\geq 3$, the moduli space $\mathcal{M}_{G,\omega}$ is a Mori dream space. In particular, the Cox ring $\mathrm{Cox}(\mathcal{M}_{G,\omega})$ is finitely generated.
\end{cor}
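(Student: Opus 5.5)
This is a direct consequence of Theorem \ref{thm Fano type of moduli space} combined with the general theory of varieties of Fano type. For $g\geq 3$, Theorem \ref{thm Fano type of moduli space} shows that $\mathcal{M}_{G,\omega}$ is of Fano type. I then invoke \cite[Corollary 5.3]{GOST}, already used in the proof of Lemma \ref{lem well known}, which states that a normal projective variety of Fano type over $\mathbb{C}$ is a Mori dream space in the sense of Definition \ref{defn cox}. That is, $\mathrm{Cl}(X)$ is finitely generated and the Cox ring with respect to a finitely generated lattice $\Gamma$ of Weil divisors with $\Gamma_{\mathbb{Q}}\cong \mathrm{Cl}(X)_{\mathbb{Q}}$ is of finite type. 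Alternatively, for the $\mathbb{Q}$-factorial case one can use \cite{HK} together with the finite generation results of Birkar--Cascini--Hacon--McKernan; the version in \cite{GOST} avoids the $\mathbb{Q}$-factorial hypothesis, which is why I would cite it.

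To make the Cox ring concrete, I use (\ref{equation free}). When $g\geq 3$, Remark \ref{rmk codim stable principal} and Remark \ref{codim regularly stable principal} give $\mathrm{Codim}(\mathrm{Bun}_{G,\omega}\setminus\mathrm{Bun}^{rs}_{G,\omega})\geq 2$. Hence $\mathrm{Cl}(\mathcal{M}_{G,\omega})=\mathrm{Pic}(\mathcal{M}^{rs}_{G,\omega})$ embeds into $\mathrm{Pic}(\mathrm{Bun}_{G,\omega})$. The latter is finitely generated by (\ref{4.1}), so $\mathrm{Cl}$ is free of finite rank. We may therefore take $\Gamma=\oplus_i \mathbb{Z} D_i$ with no torsion issues. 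The Cox ring (\ref{equation cox}) is then the ring shown to be of finite type, and the second assertion follows.

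The only point needing care is matching conventions. Finite generation is independent of the choice of $\Gamma$, since $\Gamma_{\mathbb{Q}}$ is fixed and Veronese-type subrings and finite extensions preserve finite generation. Because $\mathrm{Cl}$ is free here, this subtlety disappears altogether.
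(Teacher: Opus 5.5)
Your argument is correct and is essentially the paper's: the corollary is deduced directly from Theorem~\ref{thm Fano type of moduli space} via \cite[Corollary 1.3.2]{BCHM} and \cite[Corollary 5.3]{GOST}. Your additional step, using (\ref{equation free}) to show that $\mathrm{Cl}(\mathcal{M}_{G,\omega})$ is free of finite rank, agrees with the paper but is not needed, since the cited results already give finite generation of both the class group and the Cox ring.
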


\begin{proof}
This follows since $\mathcal{M}_{G,\omega}$ is of Fano type (see \cite[Corollary 1.3.2]{BCHM} and \cite[Corollary 5.3]{GOST}).
\end{proof}

\begin{rmk}\label{rmk g=0,1,2} If ${\mathrm{Codim}(\mathrm{Bun}_{G,\omega}\setminus \mathrm{Bun}_{G,\omega}^{rs})\geq 2}$, the proof of $\text{Theorem \ref{thm Fano type of moduli space}}$ also
implies that the moduli space $\mathcal{M}_{G,\omega}$ is of Fano type, and its Cox ring $\mathrm{Cox}(\mathcal{M}_{G,\omega})$ is finitely generated.
\end{rmk}


\section{Finite generation of the algebra of conformal blocks}
Our main result of this section is 
\begin{thm}\label{f.g. for singular}
For any $n$-pointed stable curve $X =(C,\{x\}_{x\in I})\in \overline{\mathcal{M}}_{g,n}$ and any classical simple Lie algebra $\mathfrak{g}$, the \emph{algebra of conformal blocks} $\mathbb{V}_{X,\mathfrak{g}}^\dagger$ is finitely generated.
\end{thm}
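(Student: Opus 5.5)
The plan follows the strategy of Moon and Yoo for $\mathfrak{sl}_r$. First treat smooth pointed curves, then pass to nodal curves using factorization (Theorem \ref{factorization theorem}).

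\textbf{Step 1 (smooth curves).} Let $X=(C,\{x\}_{x\in I})\in\mathcal{M}_{g,n}$ be smooth. By the Beauville--Laszlo, Kumar--Narasimhan--Ramanathan and Laszlo--Sorger uniformization, there is an identification
\[
\mathbb{V}^\dagger_{X,\mathfrak{g},\ell,\alpha}\cong \mathrm{H}^0\big(\mathrm{Bun}_{G,\{B_x\}_{x\in I}},\sL(\ell,\alpha)\big),
\]
where $\sL(\ell,\alpha)$ corresponds to $(\ell,\{\alpha_x\})$ under (\ref{4.1}). This identification is compatible with the multiplication (\ref{product of conformal blocks}). Hence $\mathbb{V}^\dagger_{X,\mathfrak{g}}$ is the sub-algebra of the Cox-type ring $\bigoplus_{L\in\mathrm{Pic}(\mathrm{Bun}_{G,\{B_x\}})}\mathrm{H}^0(\mathrm{Bun},L)$ cut out by the effective cone $\{(\ell,\alpha):(\theta,\alpha_x)\le \ell\}$. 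Sections of non-dominant degrees vanish, so the effective cone condition is automatic.

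\textbf{Step 2 (reduce to a moduli space).} Using Propagation of Vacua (Theorem \ref{theorem Propagation}), add auxiliary points $J$ with weight $0$; this does not change the algebra in the relevant degrees. After adding them, $|I\cup J|\gg0$ satisfies $(\ast)$, and Propositions \ref{prop codim semistable and stable parabolic} and \ref{prop codimension regularly stable} give $\mathrm{Codim}(\mathrm{Bun}_{G,\omega}\setminus\mathrm{Bun}^{rs}_{G,\omega})\ge2$ for a generic Borel parabolic datum $\omega$, for every $g\ge0$. For such $\omega$, sections over $\mathrm{Bun}_{G,\omega}$ extend from $\mathrm{Bun}^{rs}_{G,\omega}$. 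Sections on $\mathrm{Bun}^{rs}$ of line bundles descending along the $Z(G)$-gerbe are sections on $\mathcal{M}^{rs}_{G,\omega}$, and hence on $\mathcal{M}_{G,\omega}$ by (\ref{equation free}). Passing to the finite-index sublattice of degrees on which $Z(G)$ acts trivially (the other degrees contribute zero), $\mathbb{V}^\dagger_{X,\mathfrak{g}}$ becomes a Veronese-type subring of $\mathrm{Cox}(\mathcal{M}_{G,\omega})$ restricted to a rational polyhedral subcone. By Remark \ref{rmk g=0,1,2}, $\mathcal{M}_{G,\omega}$ is of Fano type, so $\mathrm{Cox}$ is finitely generated. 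A subring supported on a rational polyhedral cone of a finitely generated multigraded ring is finitely generated, by Gordan's lemma together with taking invariants under a torus.

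\textbf{Step 3 (nodal curves).} I would induct on the number of nodes. Let $X$ be nodal and let $X^+$ be its partial normalization at one node, which is either connected (of genus $g-1$ with $n+2$ points) or a disjoint union $X_1^+\sqcup X_2^+$. Theorem \ref{factorization theorem} gives
\[
\mathbb{V}^\dagger_{X,\mathfrak{g},\ell,\alpha}\cong\bigoplus_\mu \mathbb{V}^\dagger_{X^+,\mathfrak{g},\ell,\alpha\cup\{\mu,\mu^*\}}.
\]
I then need this as an isomorphism of algebras onto an associated graded. Following Manon, $\mathbb{V}^\dagger_X$ carries a filtration by the dominance order on $\mu$ whose associated graded is the subalgebra
\[
\bigoplus_{\ell,\alpha,\mu}\mathbb{V}^\dagger_{X^+,\ell,\alpha\cup\{\mu,\mu^*\}}\subset \mathbb{V}^\dagger_{X^+}
\]
(respectively $\subset\mathbb{V}^\dagger_{X_1^+}\otimes\mathbb{V}^\dagger_{X_2^+}$). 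This subalgebra is the invariant ring of a torus $T$ acting on $\mathbb{V}^\dagger_{X^+}$ via the weight condition that the weights at the two glued points be dual. By induction $\mathbb{V}^\dagger_{X^+}$ is finitely generated, so the torus invariants are finitely generated. A filtered algebra with finitely generated associated graded (with a suitable exhaustive, bounded-below filtration) is finitely generated.

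\textbf{Main obstacle.} The hardest step is Step 3: showing that the multiplication is compatible with factorization up to lower-order terms, so that the associated graded really is the torus-invariant subalgebra. I would first try to establish this via a degeneration argument on the flat sheaf $\mathbb{V}^\dagger_{\mathfrak{g}}$ over a one-parameter smoothing, as in Manon and Moon--Yoo. A secondary point to check is that in Step 2 Propagation of Vacua commutes with the product structure, and that the $Z(G)$-character issue only cuts out a finite-index sublattice.
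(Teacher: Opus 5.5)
Your proposal is correct. Steps 1--2 are the paper's smooth case: Lemma~\ref{useful lemma 1} and Remark~\ref{rmk g=0,1,2} applied to the enlarged pointed curve, Propagation of Vacua to cut out $\mathbb{V}^\dagger_{X,\mathfrak g}$ as a sublattice piece $R_B$, then Lemma~\ref{useful lemma 2}. The paper only skips adding points when $g\geq 3$.

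The genuine difference is in the nodal case.

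\textbf{The paper's route.} It normalizes all nodes at once and asserts, via \cite[Proposition 3.1]{M}, an algebra embedding $\mathbb{V}^\dagger_{X,\mathfrak g}\hookrightarrow\bigotimes_j\mathbb{V}^\dagger_{X_j^+,\mathfrak g}$ with image exactly $R_B$, for the subgroup $B$ in (\ref{equation 7.3}). Lemma~\ref{useful lemma 2} then finishes immediately.

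\textbf{Your route.} You only use that factorization is multiplicative modulo lower terms in the dominance order. You identify $R_B$ with the associated graded and then lift generators.

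\textbf{Why yours is the more careful formulation.} The factorization map inserts the canonical tensor $\sum_i v_i\otimes v^i\in V_\mu\otimes V_{\mu^*}$ at the node. The product of the tensors for $\mu$ and $\nu$ is the canonical tensor of $V_\mu\otimes V_\nu$, which splits over all constituents $V_\lambda$. So products in $\mathbb{V}^\dagger_{X,\mathfrak g}$ generally acquire components with $\lambda<\mu+\nu$. Hence the identification with $R_B$ is multiplicative only on $\mathrm{gr}$, and your lifting step is what makes the conclusion go through.

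\textbf{Trade-off.} The paper's phrasing buys brevity; yours costs a lifting lemma. For it, index the filtration by an integer such as $\sum_{\text{nodes}}\langle\mu,2\rho^\vee\rangle$. This drops by at least $2$ on every non-Cartan constituent, so $\mathrm{gr}$ sees exactly the Cartan products. Then use that each $\mathbb{V}^\dagger_{X,\mathfrak g,\ell,\alpha}$ is finite-dimensional with only finitely many $\mu$, so the filtration is finite on each graded piece.

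Inducting node by node instead of normalizing fully makes no difference. However, in the separating-node case your torus must also enforce $\ell_1=\ell_2$, as the paper's $B$ does, not just duality of the weights at the glued points.
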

This solves Conjecture 1.4 of \cite{MY} for classical simple Lie algebras.
The proof mainly depends on the property of the Cox ring $\mathrm{Cox}(\mathcal{M}_{G,\omega})$ (see Corollary \ref{cor cox ring} and Remark \ref{rmk g=0,1,2}).
To begin, we introduce a nice description of the space of conformal blocks in terms of \emph{parabolic $G$-bundles} (see also \cite[Theorem 1.7]{B.F} for the singular case).
\begin{thm}[see Theorem 1.2.1 of \cite{L.S}]\label{thm isomorphism}
For ${X=(C,\{x\}_{x\in I}) \in {\mathcal{M}}_{g,n}}$ and $(\alpha_x,\theta)\leq \ell$ $(x\in I)$, there is a canonical isomorphism $(see\,\, \mathrm{(\ref{4.1})})$
\begin{equation}\label{equation 7.1}
\mathrm{H}^0(\mathrm{Bun}_{G,\{P_x\}_{x\in I}},\mathcal{L}(\ell,\alpha)) \xrightarrow{\sim} \mathbb{V}_{X,\mathfrak{g},\ell,\alpha}^\dagger,
\end{equation}  where the parabolic subgroup $P_x$ is determined by the weight $\alpha_x$.
\end{thm}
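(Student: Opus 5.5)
The plan is to follow the uniformization strategy of Beauville--Laszlo and Laszlo--Sorger: present $\mathrm{Bun}_{G,\{P_x\}_{x\in I}}$ as a double quotient of a product of an affine Grassmannian and finite flag varieties, compute sections of $\mathcal{L}(\ell,\alpha)$ there via the affine Borel--Weil theorem, and identify the invariants with conformal blocks through a Lie algebra argument and Propagation of Vacua (Theorem \ref{theorem Propagation}).

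\textbf{Uniformization.} Choose an auxiliary point $p\in C\setminus I$ with local parameter $z$, and put $K_p=\mathbb{C}((z))$, $A_p=\mathbb{C}[[z]]$. Let $\mathcal{Q}=G(K_p)/G(A_p)$ be the affine Grassmannian and $\Gamma:=G(C\setminus(I\cup\{p\}))$ the ind-group of regular maps. By Drinfeld--Simpson, every $G$-bundle on $C\setminus\{p\}$ is trivial for simple, simply connected $G$. Together with Beauville--Laszlo gluing, this yields an isomorphism of stacks
$$\mathrm{Bun}_{G,\{P_x\}_{x\in I}}\cong\Big[\Gamma\backslash\Big(\mathcal{Q}\times\prod_{x\in I}G/P_x\Big)\Big],$$
where $\Gamma$ acts on $G/P_x$ through evaluation at $x$. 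Under this isomorphism, (\ref{4.1}) shows that $\mathcal{L}(\ell,\alpha)$ pulls back to $\mathcal{L}_\ell\boxtimes\boxtimes_{x}L(\alpha_x)$. Here $\mathcal{L}_\ell$ is the $\ell$-th power of the positive generator on $\mathcal{Q}$, and the factor $\mathcal{D}_{\mathrm{Ad}}$ contributes $\mathcal{L}_{m_{ad}}$ by the Kumar--Narasimhan--Ramanathan formula. Hence
$$\mathrm{H}^0(\mathrm{Bun}_{G,\{P_x\}},\mathcal{L}(\ell,\alpha))=\Big(\mathrm{H}^0(\mathcal{Q},\mathcal{L}_\ell)\otimes\bigotimes_x\mathrm{H}^0(G/P_x,L(\alpha_x))\Big)^{\Gamma}.$$
The affine Borel--Weil theorem of Kumar and Mathieu gives $\mathrm{H}^0(\mathcal{Q},\mathcal{L}_\ell)\cong H_{\ell,0}^{\ast}$, and the classical Borel--Weil theorem gives $\mathrm{H}^0(G/P_x,L(\alpha_x))\cong V_{\alpha_x}^{\ast}$. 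Here the parabolic $P_x$ is the one determined by $\alpha_x$, so that $L(\alpha_x)$ is ample on $G/P_x$.

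\textbf{From groups to Lie algebras.} The action of $\Gamma$ on $\mathcal{L}_\ell$ is a priori only an action of the central extension $\widehat{G}(K_p)$ restricted to $\Gamma$. The residue theorem shows that the Kac--Moody cocycle vanishes on $\mathfrak{g}\otimes\mathcal{O}(C\setminus(I\cup\{p\}))$, so this Lie algebra lifts canonically, and $\Gamma$-linearizations are unique because $\Gamma$ has no nontrivial characters. The key step is then
$$\Gamma\text{-invariants}=\mathfrak{g}\otimes\mathcal{O}(C\setminus(I\cup\{p\}))\text{-invariants}.$$
This requires $\Gamma$ to be an integral (reduced and connected) ind-scheme; I would invoke the Laszlo--Sorger/Beauville--Laszlo theorem to that effect. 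Granting it, the sections are identified with
$$\mathrm{Hom}_{\mathfrak{g}\otimes\mathcal{O}(C\setminus(I\cup\{p\}))}\Big(H_{\ell,0}\otimes\bigotimes_x V_{\alpha_x},\mathbb{C}\Big).$$

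\textbf{Comparison with conformal blocks.} A standard Frobenius-reciprocity argument identifies this Hom space with $\mathrm{Hom}_{\hat{\mathfrak{g}}(X^+)}(H_{\ell,0}\otimes\bigotimes_x H_{\ell,\alpha_x},\mathbb{C})$, where $X^+=(C,I\cup\{p\})$; the point is that $H_{\ell,\alpha_x}$ is generated by $V_{\alpha_x}$ modulo the Laurent polynomial algebra at $x$. Propagation of Vacua (Theorem \ref{theorem Propagation}) then removes the auxiliary point $p$ with weight $0$, giving $\mathbb{V}^{\dagger}_{X,\mathfrak{g},\ell,\alpha}$. Canonicity, namely independence of the choice of $p$ and of $z$, follows because both sides are compatible with adding points via Propagation of Vacua.

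\textbf{Main obstacle.} I expect the hardest part to be the integrality (reducedness) of $\Gamma$ and the resulting passage from group invariants to Lie-algebra invariants, together with verifying that the central extension splits compatibly with the given linearization.
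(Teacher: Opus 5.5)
The paper gives no proof of its own here; it cites Laszlo--Sorger, and your strategy is theirs. But as written your uniformization uses the wrong group, and three steps fail because of it.

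You set $\Gamma=G(C\setminus(I\cup\{p\}))$, whose elements may have poles at the points $x\in I$. Then:
\begin{enumerate}
\item Evaluation at $x$ is undefined, so $\Gamma$ does not act on $G/P_x$. Moreover, acting on $\mathcal{Q}$ it identifies bundles that differ by Hecke modifications at $I$, so the quotient is not $\mathrm{Bun}_{G,\{P_x\}_{x\in I}}$.
\item The residue argument breaks. For $u,w\in\mathcal{O}(C\setminus(I\cup\{p\}))$ one only gets $\mathrm{Res}_p(w\,du)=-\sum_{x\in I}\mathrm{Res}_x(w\,du)$, which is nonzero in general. So the cocycle at $p$ does not vanish on $\mathfrak{g}\otimes\mathcal{O}(C\setminus(I\cup\{p\}))$.
\item This Lie algebra cannot act on $V_{\alpha_x}$, so your displayed Hom-space is not defined.
\end{enumerate}

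The Drinfeld--Simpson triviality on $C\setminus\{p\}$ that you quote gives the correct group $\Gamma=G(C\setminus\{p\})$, with Lie algebra $\mathfrak{g}\otimes\mathcal{O}(C\setminus\{p\})$. Evaluation at $x$ then makes sense, and the only pole is at $p$, so the residue theorem kills the cocycle.

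Alternatively, for $I\neq\emptyset$ you may use $G(C\setminus I)$. Then $G/P_x$ must be replaced by the partial affine flag variety $G(K_x)/ev_0^{-1}(P_x)$, whose space of sections in level $\ell$ and weight $\alpha_x$ is $H^{\ast}_{\ell,\alpha_x}$ by Kumar--Mathieu. The cocycle vanishes on the diagonal embedding into $\bigoplus_{x}\mathfrak{g}\otimes K_x$. This gives $\mathbb{V}^{\dagger}_{X,\mathfrak{g},\ell,\alpha}$ directly, with no auxiliary point and no reciprocity step.

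With $\Gamma=G(C\setminus\{p\})$, your last step becomes a comparison between two different Lie algebras acting on two different modules:
\[
\mathrm{Hom}_{\mathfrak{g}\otimes\mathcal{O}(C\setminus\{p\})}\Big(H_{\ell,0}\otimes\bigotimes_{x}V_{\alpha_x},\mathbb{C}\Big)\cong\mathrm{Hom}_{\mathfrak{g}\otimes\mathcal{O}(C\setminus(I\cup\{p\}))}\Big(H_{\ell,0}\otimes\bigotimes_{x}H_{\ell,\alpha_x},\mathbb{C}\Big).
\]
This is a known comparison lemma, as I recall due to Beauville, and it needs more than Frobenius reciprocity.

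Injectivity of restriction does follow from your generation remark. You also need that, $C\setminus\{p\}$ being affine, $\mathcal{O}(C\setminus(I\cup\{p\}))$ realizes arbitrary principal parts at the points of $I$.

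Surjectivity is the harder direction. Reciprocity only extends an invariant form to the generalized Verma modules induced from $V_{\alpha_x}$. To descend to $H_{\ell,\alpha_x}$ you must show that the extended form kills $(X_\theta\otimes z_x^{-1})^{\ell-(\alpha_x,\theta)+1}v_{\alpha_x}$. This uses the integrability of $H_{\ell,0}$ at $p$: take a global $\varphi$ with a simple pole at $x$, zeros at the other points of $I$, and poles at $p$; then $X_\theta\otimes\varphi$ acts locally nilpotently. Combine this with $\mathfrak{sl}_2$-theory for $X_{\pm\theta}\otimes z_x^{\mp1}$. This is exactly where $(\alpha_x,\theta)\leq\ell$ enters.

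With these two corrections, the remaining ingredients you list assemble into the standard proof: Kumar--Mathieu, integrality of $G(C\setminus\{p\})$, uniqueness of linearizations, and propagation of vacua.
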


Let $B_x\subset P_x$ ($x\in I$) be Borel subgroups. The embedding  $$\mathrm{H}^0(\mathrm{Bun}_{G,\{P_x\}_{x\in I}},\mathcal{L}(\ell,\alpha))\hookrightarrow \mathrm{H}^0(\mathrm{Bun}_{G,\{B_x\}_{x\in I}},\mathcal{L}(\ell,\alpha'))$$
can be naturally viewed as a subspace, where the character $\chi_{\alpha_x'}:=\chi_{\alpha_x}|_{B_x}$. With this identification, there is a natural multiplication on the \emph{algebra of conformal blocks} $\mathbb{V}^{\dagger}_{X,\mathfrak{g}}$ induced by the isomorphism (\ref{equation 7.1}), which coincides with the multiplication (\ref{product of conformal blocks}) 
$$
\mathbb{V}_{X,\mathfrak{g},\ell,\alpha}^\dagger \otimes \mathbb{V}_{X,\mathfrak{g},m,\beta}^\dagger \to \mathbb{V}_{X,\mathfrak{g},\ell+m,\alpha+\beta}^\dagger
$$ when $C$ is smooth (see \cite[\S 2.3]{M}), where $\alpha+\beta:=\{\alpha_x+\beta_x\}_{x\in I}$.

The following two lemmas will be useful in the proof of Theorem \ref{f.g. for singular}.
\begin{lem}\label{useful lemma 1} Let $X=(C,\{x\}_{x\in I})$ be an $n$-pointed smooth curve, and $\omega=(\{B_x\}_{x\in I},a)$ where  $B_x\subset G$ are Borel subgroups. If
\begin{equation}\label{equation codim}
\mathrm{Codim}(\mathrm{Bun}_{G,\omega}\setminus \mathrm{Bun}^{rs}_{G,\omega})\geq 2, \end{equation} then $\mathrm{Cox}(\mathcal{M}_{G,\omega})=\mathbb{V}^{\dagger}_{X,\mathfrak{g}}$.
\end{lem}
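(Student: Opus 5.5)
The plan is to compare both sides degree by degree, using the codimension hypothesis (\ref{equation codim}) to move sections from the moduli space to the stack.

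\textbf{Class group.} By (\ref{equation free}), $\mathrm{Cl}(\mathcal{M}_{G,\omega})=\mathrm{Pic}(\mathcal{M}^{rs}_{G,\omega})\hookrightarrow \mathrm{Pic}(\mathrm{Bun}_{G,\omega})$. By (\ref{4.1}) with Borel subgroups, $\mathrm{Pic}(\mathrm{Bun}_{G,\{B_x\}})\cong \mathbb{Z}\times\prod_{x\in I}X^*(B_x)$. The map $\pi_G:\mathrm{Bun}^{rs}_{G,\omega}\to\mathcal{M}^{rs}_{G,\omega}$ is a $Z(G)$-gerbe, so a line bundle on $\mathrm{Bun}^{rs}_{G,\omega}$ descends exactly when $Z(G)$ acts trivially on it. I expect this to be a finite-index condition. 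For the Cox ring in Definition \ref{defn cox} we may take $\Gamma$ to be any lattice spanning $\mathrm{Cl}_{\mathbb{Q}}$. If the descent condition does cut out a proper sublattice, I would either take the Cox ring over the full lattice $\mathbb{Z}\times\prod X^*(B_x)$, using divisorial sheaves on the moduli space, or note that the two rings differ by a finite-index Veronese-type subring. Finite generation is preserved under that change, by taking invariants under a finite diagonalizable group.

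\textbf{Sections on a fixed degree.} Fix a line bundle $\mathcal{L}=\mathcal{L}(\ell,\alpha)$. The natural chain is
$$\mathrm{H}^0(\mathcal{M}_{G,\omega},\sO(D))=\mathrm{H}^0(\mathcal{M}^{rs}_{G,\omega},\sO(D))=\mathrm{H}^0(\mathrm{Bun}^{rs}_{G,\omega},\mathcal{L})=\mathrm{H}^0(\mathrm{Bun}_{G,\omega},\mathcal{L}).$$
The justifications are as follows.
\begin{itemize}
\item The first equality holds because $\mathcal{M}_{G,\omega}$ is normal and its complement of $\mathcal{M}^{rs}$ has codimension $\geq 2$. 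This uses $\dim \mathcal{M}=\dim\mathrm{Bun}+\dim Z(G)$-type dimension counting along the good quotient.
\item The second holds by the gerbe property: sections of a descended bundle are the same as sections upstairs.
\item The third holds by Hartogs on the smooth stack $\mathrm{Bun}_{G,\omega}$, using (\ref{equation codim}).
\end{itemize}

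\textbf{Identify the stack sections.} If $\alpha$ is dominant with $(\alpha_x,\theta)\le\ell$, Theorem \ref{thm isomorphism} identifies these sections with $\mathbb{V}^\dagger_{X,\mathfrak{g},\ell,\alpha}$. This goes through $P_x\supset B_x$ and the identification with sections on the $B_x$-stack recalled after that theorem.

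In all other degrees I need vanishing. This comes from Borel–Weil on the fibres $\prod G/B_x$ of $\mathrm{Bun}_{G,\{B_x\}}\to\mathrm{Bun}_G$ given by Lemma \ref{thm forgetting map of stack}, which gives vanishing unless each $\alpha_x$ is dominant. It also comes from the Kumar–Narasimhan/Mathieu description, which gives vanishing unless $\ell\ge0$ and $(\alpha_x,\theta)\le\ell$. The latter follows from the affine Borel–Weil theorem via the uniformization, i.e. Theorem \ref{thm isomorphism} in its general form.

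\textbf{Multiplication and the main obstacle.} Multiplication in the Cox ring is multiplication of sections, and it matches the product on $\mathbb{V}^\dagger_{X,\mathfrak{g}}$ by the compatibility quoted from \cite[\S 2.3]{M}. The main obstacle is the lattice bookkeeping in the class-group step, i.e. matching $\mathrm{Cl}(\mathcal{M}_{G,\omega})$ with the full grading group so that the equality is literal. I would handle it as above, adjusting the choice of $\Gamma$ in Definition \ref{defn cox}.
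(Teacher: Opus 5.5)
Your chain of identifications in a fixed degree is the same as the paper's:
\[
\mathrm{H}^0(\mathcal{M}_{G,\omega},\sO(D))=\mathrm{H}^0(\mathcal{M}^{rs}_{G,\omega},\sO(D))=\mathrm{H}^0(\mathrm{Bun}^{rs}_{G,\omega},\mathcal{L})=\mathrm{H}^0(\mathrm{Bun}_{G,\omega},\mathcal{L}),
\]
followed by Theorem \ref{thm isomorphism}. You are in fact more careful than the paper about vanishing outside $\ell\ge 0$, $\alpha_x$ dominant and $(\alpha_x,\theta)\le\ell$. A small correction: since $Z(G)$ is finite, the dimension count is simply $\dim\mathcal{M}^{rs}_{G,\omega}=\dim\mathrm{Bun}^{rs}_{G,\omega}$, and the paper just invokes (\ref{equation free}).

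The genuine gap is the step you flag as the main obstacle, which you leave unresolved. You need this fact: let $\mathcal{L}$ be a line bundle on the $Z(G)$-gerbe $\pi_G:\mathrm{Bun}^{rs}_{G,\omega}\to\mathcal{M}^{rs}_{G,\omega}$, and suppose $Z(G)$ acts on its fibres through a nontrivial character $\chi$. Then $\mathrm{H}^0(\mathrm{Bun}^{rs}_{G,\omega},\mathcal{L})=0$, i.e.\ $\pi_{G*}\mathcal{L}=0$. The reason is that a section $s$ is invariant under the automorphisms of each object. So for $z\in Z(G)\subset\mathrm{Aut}(E)$ we get $s(E)=\chi(z)s(E)$, which forces $s(E)=0$. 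Here $\chi$ is locally constant and the stack is connected. Equivalently, every line bundle with a nonzero section descends, which is exactly what the paper uses. Consequently, the degrees in $\mathrm{Pic}(\mathrm{Bun}^{rs}_{G,\omega})=\mathrm{Pic}(\mathrm{Bun}_{G,\omega})$ outside the image of $\mathrm{Pic}(\mathcal{M}^{rs}_{G,\omega})=\mathrm{Cl}(\mathcal{M}_{G,\omega})$ contribute nothing. The direct sum over $\mathrm{Cl}(\mathcal{M}_{G,\omega})$ therefore equals the direct sum over the full lattice on the nose. This is consistent with conformal blocks on a smooth curve vanishing unless $Z(G)$ acts trivially on $\bigotimes_x V_{\alpha_x}$.

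Neither of your fallbacks gives the stated equality.
\begin{itemize}
\item Classes in $\mathrm{Pic}(\mathrm{Bun}_{G,\omega})$ outside the descent sublattice $B'$ are not Weil divisor classes on $\mathcal{M}_{G,\omega}$. So you cannot enlarge $\Gamma$ to the full lattice under Definition \ref{defn cox}.
\item The ``Veronese'' comparison only shows $\mathrm{Cox}(\mathcal{M}_{G,\omega})=(\mathbb{V}^\dagger_{X,\mathfrak{g}})_{B'}$.
\item Taking invariants under a finite diagonalizable group passes finite generation from $\mathbb{V}^\dagger_{X,\mathfrak{g}}$ to this subring. The smooth case of Theorem \ref{f.g. for singular} needs the opposite direction: Cox ring finitely generated implies $\mathbb{V}^\dagger_{X,\mathfrak{g}}$ finitely generated.
\end{itemize}
Once you add the vanishing above, none of this is needed.
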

\begin{proof} By (\ref{equation free}),  $\mathrm{Cl}(\mathcal{M}_{G,\omega})=\mathrm{Pic}(\mathcal{M}^{rs}_{G,\omega})$ is a free abelian group of finite rank, and $$\mathrm{Cox}(\mathcal{M}_{G,\omega})=\mathrm{Cox}(\mathcal{M}^{rs}_{G,\omega})=\bigoplus_{L \in \mathrm{Pic}(\mathcal{M}^{rs}_{G,\omega})} \mathrm{H}^0(\mathcal{M}^{rs}_{G,\omega},L).$$
Consider the $\mathrm{Z}(G)$-gerbe $\mathrm{Bun}^{rs}_{G,\omega}\xrightarrow{\pi_G} \mathcal{M}^{rs}_{G,\omega}$. Then any line bundle $\mathcal{L}$ with nonzero global sections on $\mathrm{Bun}^{rs}_{G,\omega}$ descends to $\mathcal{M}^{rs}_{G,\omega}$. Thus, 
$$\aligned
\mathrm{Cox}(\mathcal{M}_{G,\omega})
=&\bigoplus_{\mathcal{L} \in \mathrm{Pic}(\mathrm{Bun}^{rs}_{G,\omega})} \mathrm{H}^0(\mathrm{Bun}^{rs}_{G,\omega},\mathcal{L}) =\bigoplus_{\mathcal{L} \in \mathrm{Pic}(\mathrm{Bun}_{G,\omega})} \mathrm{H}^0(\mathrm{Bun}_{G,\omega},\mathcal{L})\\
=&\bigoplus_{\ell,\alpha} \mathrm{H}^0(\mathrm{Bun}_{G,\{B_x\}_{x\in I}},\mathcal{L}(\ell,\alpha)) =\mathbb{V}_{X,\mathfrak{g}}^\dagger \,,
\endaligned$$ where the last equality follows from Theorem \ref{thm isomorphism}. We are done. 
\end{proof}

\begin{lem}\label{useful lemma 2}
Let $A$ be a free abelian group of finite rank, and $R$ be a finitely generated $A$-graded $\mathbb{C}$-algebra with grading map $\pi:R\rightarrow A$. If $B\subset A$ is a subgroup and $R_B:=\{x\in R\mid \pi(x)\in B\}\subset R$ is the corresponding subalgebra, then $R_B$ is also finitely generated. Moreover, if $R$ is integrally closed, then so is $R_B$.
\end{lem}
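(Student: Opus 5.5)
The first claim, that $R_B$ is finitely generated, can be reduced to the classical fact that invariants of a diagonalizable group acting on a finitely generated algebra are finitely generated. Let $D:=\mathrm{Hom}(A/B,\mathbb{C}^\ast)$. Since $A/B$ is a finitely generated abelian group, $D$ is a diagonalizable algebraic group, a product of a torus and a finite abelian group. The $A$-grading on $R$ induces an $A/B$-grading, which is the same thing as a rational action of $D$ on $R$ by $\mathbb{C}$-algebra automorphisms: $t\in D$ acts on a homogeneous $r$ by $t\cdot r=t(\overline{\pi(r)})\,r$. The characters of $D$ separate the elements of $A/B$, so the invariant subalgebra is exactly
\[
R^{D}=\bigoplus_{\overline{a}=0}R_a=R_B .
\]
Because $D$ is linearly reductive, Hilbert's/Nagata's finiteness theorem gives that $R^D=R_B$ is a finitely generated $\mathbb{C}$-algebra.

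For a self-contained argument avoiding reductive group theory, I would use the combinatorial route instead. Pick finitely many homogeneous generators $r_1,\dots,r_k$ of $R$, with degrees $a_i=\pi(r_i)$. Consider the monoid
\[
S=\{(n_1,\dots,n_k)\in\mathbb{Z}_{\ge0}^k : \textstyle\sum n_i a_i\in B\}.
\]
This is the intersection of $\mathbb{Z}_{\ge 0}^k$ with the preimage of the subgroup $B$ under a homomorphism $\mathbb{Z}^k\to A$, so it is an affine monoid. By Gordan's lemma (applied to the saturated cone intersected with a sublattice of finite index in its group), $S$ is finitely generated. The monomials $r^{n}$ with $n$ ranging over the generators of $S$ then generate $R_B$, since every homogeneous element of $R_B$ is a linear combination of monomials $r^n$ with $n\in S$. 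The only point needing care is the finite generation of $S$. I would handle it by noting that $S$ is the set of lattice points of the rational polyhedral cone $\mathbb{R}_{\ge0}^k\cap(\text{preimage of }B\otimes\mathbb{R})$ lying in the lattice $\ker(\mathbb{Z}^k\to A/B)$, and Gordan's lemma applies to any lattice.

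For the integral closedness, I assume $R$ is a domain, which is implicit in "integrally closed". Let $K_B=\mathrm{Frac}(R_B)$, and suppose $f\in K_B$ is integral over $R_B$. Then $f$ lies in $\mathrm{Frac}(R)$ and is integral over $R$, so $f\in R$. Write $f=p/q$ with $p,q\in R_B$; I need to show $f\in R_B$. Decompose $f=\sum_{a}f_a$ into homogeneous components. The identity $qf=p$ holds with $q,p$ a priori not homogeneous, so a cleaner argument is needed. Use the $D$-action: $R_B=R^D$, and $K_B$ is fixed by $D$ acting on $\mathrm{Frac}(R)$. Hence $f\in R\cap \mathrm{Frac}(R)^D=R^D=R_B$, because $f$ is fixed by $D$ as a quotient of invariants and $D$ acts on $R$ compatibly with its action on $\mathrm{Frac}(R)$.

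The main subtlety I expect is justifying that the $D$-action extends to $\mathrm{Frac}(R)$ with $R\cap\mathrm{Frac}(R)^D=R^D$. This is immediate, since each $t\in D$ acts by a ring automorphism of the domain $R$ and therefore extends uniquely to the fraction field.
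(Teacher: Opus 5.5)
Your proof is correct and follows essentially the paper's route. The paper cites \cite[Lemma 4.11]{MY} for finite generation, noting that its proof realizes $R_B$ as an invariant ring $R^H$ of a reductive group; this is your diagonalizable $D=\mathrm{Hom}(A/B,\mathbb{C}^\ast)$. For integral closedness it cites \cite[Proposition 3.1]{D}, which is the argument $f\in R\cap \mathrm{Frac}(R)^{D}=R^{D}$ that you write out. Your Gordan's-lemma alternative is also valid and gives a more elementary proof of the finite-generation part.
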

\begin{proof} The finite generation part is exactly \cite[Lemma 4.11]{MY}. Moreover, its proof implies that $R_B$ can be realized as $R^H$ for some reductive group $H$ that acts naturally on $R$. Then $R_B$ is integrally closed by $\text{\cite[Proposition 3.1]{D}}$. We are done.
\end{proof}

We now go to prove Theorem \ref{f.g. for singular}. Following the case $\mathfrak{g}=\mathfrak{sl}_r$ (see \cite{MY}), we will exhibit 
the algebra $\mathbb{V}^{\dagger}_{X,\mathfrak{g}}$ as a subalgebra of a finitely generated algebra $R$ of the form $R_B$.

\begin{proof}[Proof of Theorem \ref{f.g. for singular}: the smooth case] When $g\geq 3$, we have $$\mathrm{Codim}(\mathrm{Bun}_{G,\omega}\setminus \mathrm{Bun}_{G,\omega}^{rs})\geq 2$$ (see Remark \ref{rmk codim stable principal} $\&$ \ref{codim regularly stable principal}), and the result follows from Corollary \ref{cor cox ring} and $\text{Lemma \ref{useful lemma 1}}$. Otherwise, if $g\leq 2$, we go to consider the pointed curve $X^+=(C,\{x\}_{x\in I\cup J})$, where $|J|$ is sufficiently large. Let $\omega_J=(\{B_x\}_{x\in I\cup J}, \{a_x\}_{x\in I\cup J})$ be the parabolic data, where $\mathrm{Lie}(a_x)\in \Phi_0^s$ for ${x\in I\cup J}$, and the weight $\{a_x\}_{x\in I\cup J}$ satisfies assumptions (1) and (2) of Proposition \ref{prop codim semistable and stable parabolic}. Note that $\mathrm{Lie}(a_x), x\in I\cup J$ must be nonzero (see Remark \ref{rmk nonzero}). When ${J\gg 0}$, we have $$\mathrm{Codim}(\mathrm{Bun}_{G,\,\omega_J}\setminus \mathrm{Bun}^{rs}_{G,\,\omega_J})\geq 2$$ by Proposition \ref{prop codim semistable and stable parabolic} $\&$ \ref{prop codimension regularly stable}. Then $\text{Remark \ref{rmk g=0,1,2}}$ and $\text{Lemma \ref{useful lemma 1}}$ imply that $R=\mathbb{V}^{\dagger}_{X^+,\mathfrak{g}}$ is finitely generated. Consider the subalgebra $\mathbb{V}^{\dagger}_{X,\mathfrak{g}}:=\bigoplus_{(\alpha_x,\theta)\leq \ell}\mathbb{V}^{\dagger}_{X,\mathfrak{g},\ell,\alpha}$ $$=
\bigoplus_{(\alpha_x,\theta)\leq \ell}\mathbb{V}^{\dagger}_{X,\mathfrak{g},\ell,\alpha\cup \{0\}_{x\in J}}
\subset \underset{(\alpha_x,\theta)\leq \ell}{\bigoplus}\mathbb{V}^{\dagger}_{X^+,\mathfrak{g},\ell,\alpha\cup \{\alpha_x\}_{x\in J}}:=\mathbb{V}^{\dagger}_{X^+,\mathfrak{g}},$$ where the second equality follows from Theorem \ref{theorem Propagation}. 
Set $$\mathbb{Z} \times \prod_{x\in I}X^{\ast}(B_x):=B\subset A:=\mathbb{Z} \times \prod_{x\in I\cup J}X^{\ast}(B_x).$$
The algebra $R$ is $A$-graded (see (\ref{equation graded})), and $\mathbb{V}^{\dagger}_{X,\mathfrak{g}}=R_B$. Now, the result follows from $\text{Lemma \ref{useful lemma 2}}$, since $B\subset A$  is a subgroup of the free abelian group $A$ of finite rank. We are done.
\end{proof}

Let ${\widetilde{C}={\sqcup}_{j \in J} C_j \xrightarrow{\pi} C}$ be the normalization. For each singular point $y \in C$, let $\pi^{-1}(y)=\{p_y,q_y\}$, and identify  $\pi^{-1}(x)$ with $x$ for every smooth point. Set $I_j:=\{x\in I\mid x\in C_j\}$, $I_j^s=\{p_y,q_y\mid p_y,q_y\in C_j\}$, and $I_j^+=I_j\cup I_j^s$.
\goodbreak

Let $S_\ell$ be the set of ${\mu}=\{\mu_x\}_{x \in \cup_{j\in J}I_j^s}$, where each $\mu_x$ is a dominant integral weight satisfying $(\mu_x,\theta) \leq \ell$ and $\mu_{q_y}=\mu_{p_y}^*$ for every singular point $y \in C$. For $j \in J$, with $\alpha=\{\alpha_x\}_{x\in I}$ and ${\mu} \in S_\ell$, define $(\alpha \star {\mu})_j$ to be the assignment of dominant integral weights to $\{x\}_{x\in I_j^+}$ as follows:
$$\text{(1) if $x\in I_j$, assign $\alpha_x$;\,\,\,\,
(2) if $x\in I_j^s$, assign $\mu_x$.} $$
By applying Theorem \ref{factorization theorem} repeatedly, we have
$$
\mathbb{V}_{X,\mathfrak{g},\ell,\alpha}^\dagger \cong \bigoplus_{{\mu} \in S_\ell} \bigotimes_{j \in J} \mathbb{V}_{X^+_j,\mathfrak{g},\ell,({\alpha} \star {\mu})_j}^\dagger,
$$
and there is a homomorphism of algebras: $\mathbb{V}^{\dagger}_{X,\mathfrak{g}}:=$ $${\bigoplus_{(\alpha_x,\theta)\leq \ell}\mathbb{V}_{X,\mathfrak{g},\ell,\alpha}^\dagger \hookrightarrow \bigoplus_{(\alpha_x,\theta)\leq \ell}\,\, \bigoplus_{\mu\in S_{\ell}}\bigotimes_{j\in J}\mathbb{V}^{\dagger}_{X^+_j,\mathfrak{g},\ell,\{\alpha_x\}_{x\in I_j}\cup \{\mu_x\}_{x\in I_j^s}}:=\bigotimes_{j\in J}\mathbb{V}^{\dagger}_{X_j^+,\mathfrak{g}}}$$
(see \cite[Proposition 3.1]{M}),
where $X_j^+=(C_j,\{x\}_{x\in I_j^+})$.
\goodbreak

\begin{proof}[Proof of Theorem \ref{f.g. for singular}: the singular case]
Since $X_j^+$ ($j\in J$) is a smooth pointed curve, the algebra $\mathbb{V}^{\dagger}_{X_j^+,\mathfrak{g}}$ is finitely generated. So is $\bigotimes_{j\in J}\mathbb{V}^{\dagger}_{X_j^+,\mathfrak{g}}$.
Set $B:=$ \begin{equation}\label{equation 7.3}{\left\{ (\{\ell_j\}_{j\in J},\{\alpha_j\}_{j\in J})\middle| \ell_{j_1}=\ell_{j_2}\,\text{and}\,\alpha_{j_1,q_y}=\alpha^{\ast}_{j_2,p_y}\,\text{for}\,j_1,j_2\in J\right\}}.\end{equation}
Clearly, $B\subset A:=\prod_{j\in J}\big(\mathbb{Z}\times \prod_{x\in I^+_j} X^{\ast}(B_x)\big)$
is a subgroup. The algebra $R:=\bigotimes_{j\in J}\mathbb{V}^{\dagger}_{X_j^+,\mathfrak{g}}$ is $A$-graded (see (\ref{equation graded})), and $\mathbb{V}^{\dagger}_{X,\mathfrak{g}}=R_B$.
Then the result follows from $\text{Lemma \ref{useful lemma 2}}$.
\end{proof}

When $X\in \mathcal{M}_{g,n}$ and $\mathrm{Codim}(\mathrm{Bun}_{G,\omega}\setminus \mathrm{Bun}_{G,\omega}^{rs})\geq 2$, 
the group $\mathrm{Cl}(\mathcal{M}_{G,\omega})$ is a free abelian group of finite rank (see (\ref{equation free})). Then by \cite[Corollary 1.2]{EKW}, 
the Cox ring $\mathrm{Cox}(\mathcal{M}_{G,\omega})$ of the normal projective variety $\mathcal{M}_{G,\omega}$ is integrally closed.
\begin{rmk}\label{rmk closed} From the above argument, the {\em algebra of conformal blocks}  $\mathbb{V}^{\dagger}_{X,\mathfrak{g}}$ for $X\in \overline{\mathcal{M}}_{g,n}$  is also integrally closed using Lemma \ref{useful lemma 2}.
\end{rmk}

As an application, we show that, when $g\geq 2$, moduli spaces of semistable \emph{parabolic $G$-bundles}
on smooth curves admit an irreducible normal projective limit for $ X\in \overline{\mathcal{M}}_{g,n}-\mathcal{M}_{g,n}$.

Let $C$ be a smooth curve of genus $g\geq 2$, and $L$ be an ample line bundle on $\mathcal{M}_{G,\omega}$, where $\omega=(\{P_x\}_{x\in I},a)$ with $n=|I|$. Set $\sL:=\pi_G^{\ast}L$, where $\pi_G:\mathrm{Bun}_{G,\omega}^{ss}\rightarrow \mathcal{M}_{G,\omega}$.
Since $$\mathrm{Codim}(\mathrm{Bun}_{G,\omega}\setminus \mathrm{Bun}^{ss}_{G,\omega})\geq 2\,\,\,\,\text{(see Remark \ref{rmk codim stable principal})},$$
the line bundle $\mathcal{L}$ extends uniquely to $\mathrm{Bun}_{G,\omega}$. By abuse of notation, we still write $\mathcal{L}$ for the extension.
This defines a subgroup $\mathbb{Z}\sL:=B\subset A:= \mathrm{Pic}(\mathrm{Bun}_{G,\{B_x\}_{x\in I}})=\mathbb{Z}\times \prod_{x\in I}X^{\ast}(B_x)$ (with Borel subgroups $B_x\subset P_x$, see (\ref{4.1})).

We now show that for smooth $C$, the moduli space $\mathcal{M}_{G,\omega}=\mathrm{Proj}(R_{B})$, where $R=\mathbb{V}^{\dagger}_{X,\mathfrak{g}}$ is graded by $A$ (see (\ref{equation graded})). In fact, $$\aligned R_B=&\bigoplus_{m\in \mathbb{Z}}H^0(\mathrm{Bun}_{G,\{B_x\}_{x\in I}}, \pi^{\ast}\mathcal{L}^m)=\bigoplus_{m\in \mathbb{Z}}H^0(\mathrm{Bun}_{G,\omega}, \mathcal{L}^m)\\ =&\bigoplus_{m\in \mathbb{Z}}H^0(\mathrm{Bun}^{ss}_{G,\omega}, \mathcal{L}^m)
=\bigoplus_{m\in \mathbb{Z}} H^0(\mathcal{M}_{G,\omega},L^m);\endaligned$$ this follows from our choice of $B\subset A$ and the above codimension estimate.
Moreover, multiplication maps coincide
(see the statement under Theorem \ref{thm isomorphism}). Thus, \begin{equation}\label{equation identity}\mathcal{M}_{G,\omega}=\mathrm{Proj}\Big(\bigoplus_{m\in \mathbb{Z}} H^0(\mathcal{M}_{G,\omega},L^m)\Big)=\mathrm{Proj}(R_B)\end{equation} due to the ampleness of $L$.

The \emph{algebra of conformal blocks} $\mathbb{V}^{\dagger}_{X,\mathfrak{g}}$ can be defined relatively on $\overline{\mathcal{M}}_{g,n}$, and it forms a flat sheaf $\mathbb{V}^{\dagger}_{\mathfrak{g}}$ of finitely generated and integrally closed algebras (see Theorem \ref{f.g. for singular} and Remark \ref{rmk closed}), which is $A$-graded (see (\ref{equation graded})).
Set $\mathbb{M}:=\mathbf{Proj}(\mathbb{R}_B)\rightarrow \overline{\mathcal{M}}_{g,n}$, where $\mathbb{R}_B\subset \mathbb{V}^{\dagger}_{\mathfrak{g}}$ is the sheaf of $B$-graded algebras $R_B$ with $R=\mathbb{V}^{\dagger}_{X,\mathfrak{g}}$. By Lemma \ref{useful lemma 2}, the algebra $R_B$ is finitely generated and integrally closed for every ${X\in \overline{\mathcal{M}}_{g,n}}$.  When $X\in \mathcal{M}_{g,n}$,  the fibre $\mathbb{M}_X=\mathrm{Proj}(R_B)=\mathcal{M}_{G,\omega}$ (see (\ref{equation identity})).  Thus, we have

\begin{cor}\label{corollary compactify} Let $G$ be a classical simple and simply connected group,  and $g\geq 2$. Then there is a flat family $\mathbb{M}\rightarrow \overline{\mathcal{M}}_{g,n}$ such that
\begin{itemize}
\item[(1)] for $X=(C,\{x\}_{x\in I})\in \mathcal{M}_{g,n}$, we have $\mathbb{M}_X\cong \mathcal{M}_{G,\omega}$$;$
\item[(2)] the fibre $\mathbb{M}_X$ over $X\in \overline{\mathcal{M}}_{g,n}-\mathcal{M}_{g,n}$ is an irreducible normal projective variety.
\end{itemize}
\end{cor}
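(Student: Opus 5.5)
The discussion just before the statement already contains most of the argument; the plan is to organize it into a proof of (1), (2) and flatness. Fix $\omega=(\{P_x\}_{x\in I},a)$ with $\mathrm{Lie}(a_x)\in\Phi^s_0$, and an ample line bundle $L$ on $\mathcal{M}_{G,\omega}$ for some fixed smooth $X_0\in\mathcal{M}_{g,n}$. Let $\sL=\pi_G^*L$, extended over $\mathrm{Bun}_{G,\omega}$ using $\mathrm{Codim}(\mathrm{Bun}_{G,\omega}\setminus\mathrm{Bun}^{ss}_{G,\omega})\geq 2$ for $g\geq 2$ (Remark \ref{rmk codim stable principal}). Its class $(\ell,\alpha)\in A=\mathbb{Z}\times\prod_{x\in I}X^*(B_x)$ under (\ref{4.1}) is discrete, hence independent of the curve. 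So $B:=\mathbb{Z}(\ell,\alpha)$ defines a subsheaf of algebras $\mathbb{R}_B=\bigoplus_{m}\mathbb{V}^\dagger_{\mathfrak{g},m\ell,m\alpha}$ of $\mathbb{V}^\dagger_{\mathfrak{g}}$ on all of $\overline{\mathcal{M}}_{g,n}$. One point needs care: $L$ must be taken ample on $\mathcal{M}_{G,\omega}$ for every smooth curve simultaneously. I would take $L$ to be the descent of the parabolic theta bundle attached to $\omega$ (a multiple of the polarization from the GIT construction in Theorem \ref{thm existence of moduli space}). This bundle is defined in families, and its class in $A$ is constant.

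Set $\mathbb{M}:=\mathbf{Proj}(\mathbb{R}_B)$. For (1), the computation before the corollary gives $R_B=\bigoplus_m H^0(\mathcal{M}_{G,\omega},L^m)$ with matching multiplication, by the remark after Theorem \ref{thm isomorphism}. Ampleness of $L$ then gives $\mathbb{M}_X=\mathrm{Proj}(R_B)\cong\mathcal{M}_{G,\omega}$, as in (\ref{equation identity}).

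Flatness comes from the graded pieces. Each $\mathbb{V}^\dagger_{\mathfrak{g},m\ell,m\alpha}$ is a vector bundle on $\overline{\mathcal{M}}_{g,n}$ by \cite{TUY}, so $\mathbb{R}_B$ is a locally free graded $\sO$-algebra. Its formation commutes with base change, because conformal blocks commute with base change and the product (\ref{product of conformal blocks}) is defined relatively. By Theorem \ref{f.g. for singular} and Lemma \ref{useful lemma 2}, each fibre $R_B$ is finitely generated. To get local finite generation of the sheaf, I would argue that a finite set of degrees $\le N$ generating $R_B$ at one point also generates nearby, since surjectivity of the multiplication maps is an open condition. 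A cleaner route may be to use the relative version of the Proj construction from \cite{MY}. Granting this, $\mathbf{Proj}$ of a locally free, fibrewise finitely generated graded algebra is flat, and each fibre is $\mathrm{Proj}(R_B)$. This is projective because $R_B$ is finitely generated with $R_0=\mathbb{C}$: propagation of vacua gives $\mathbb{V}^\dagger_{0,0}=\mathbb{C}$.

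For (2), normality follows from Remark \ref{rmk closed} with Lemma \ref{useful lemma 2}: $R_B$ is integrally closed, so $\mathrm{Proj}(R_B)$ is normal. For irreducibility I would show $R_B$ is a domain. Then $\mathrm{Proj}$ is integral, and normality of a domain means integrally closed in its fraction field, which is the sense needed. The domain property follows from the factorization embedding $\mathbb{V}^\dagger_{X,\mathfrak{g}}\hookrightarrow\bigotimes_j\mathbb{V}^\dagger_{X_j^+,\mathfrak{g}}$ of \cite[Proposition 3.1]{M}. Each factor is a Cox ring of a normal projective variety by Lemma \ref{useful lemma 1}, after adding points via propagation of vacua, hence a domain. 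The tensor product of finitely generated $\mathbb{C}$-domains is a domain.

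The main obstacle is the uniformity of $L$ and $B$ across $\overline{\mathcal{M}}_{g,n}$, together with the local finite generation of $\mathbb{R}_B$ near singular curves, rather than the fibrewise statements. I would first try to resolve it by choosing $L$ as the theta bundle of Lemma \ref{lem 3.2}, pulled back along $\psi$, so that ampleness holds for every smooth $C$ by the finiteness of $\psi$.
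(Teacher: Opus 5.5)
Your proposal is correct and follows essentially the paper's argument: $\mathbb{M}=\mathbf{Proj}(\mathbb{R}_B)$ with $B=\mathbb{Z}\cdot[\mathcal{L}]$, the identification (\ref{equation identity}) on smooth fibres, flatness from local freeness of the conformal block bundles, and normality and irreducibility of the remaining fibres from $R_B$ being a finitely generated integrally closed domain (Theorem \ref{f.g. for singular}, Lemma \ref{useful lemma 2}, Remark \ref{rmk closed}). You usefully make explicit two points the paper leaves implicit: the domain property, which you get from the factorization embedding, and the need for a single class $[\mathcal{L}]\in A$ that descends to an ample line bundle on $\mathcal{M}_{G,\omega}$ for every smooth pointed curve.

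For that second point, your fallback of pulling back the theta bundle of Lemma \ref{lem 3.2} along a finite $\psi$ only works when $\mathrm{Lie}(a_x)\in\Phi_0^{\ast}$. This is exactly the restriction in Remark \ref{rmk 5.7}, and it is a genuine restriction in types $B$ and $D$. For general $\omega$, use instead the polarization pulled back along the finite map to the moduli of $A$-equivariant $\mathrm{GL}(V)$-bundles on $\widehat{C}$ from the construction in Section \ref{Section equivariant}, which is your primary suggestion anyway.
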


Finally, we consider some special parabolic data $\omega$ when $g<2$. Let $\Phi: G\rightarrow \mathrm{SL}(V)\subset \mathrm{GL}(V)$ be the standard representation, and set \begin{equation}\label{equation fundamental alcove subset}\Phi_0^{\ast}:=\{h\in \mathfrak{h}\mid \,\theta_{\mathrm{sl}(V)}(h)< 1, \alpha(h)\geq 0\,\, \forall\, \alpha\in \Delta_+\}\subset \Phi_0^s\end{equation} (see (\ref{equation fundamental alcove})). For $\omega=(\{P_x\}_{x\in I}, \{a_x\}_{x\in I})$ with $\tau_x=\mathrm{Lie}(a_x)\in \Phi_0^{\ast}$, we have $\theta_{\mathrm{sl}(V)}(\tau_x')<1$ (see Section \ref{section fano}). In this case, $$\mathcal{M}_{G,\omega}=\mathfrak{F}^{ss}_{\Phi\text{-}\mathrm{FlPsBun}}//\mathrm{GL(Y)},\,\,\,\, \mathrm{Bun}^{ss}_{G,\omega}=[\mathfrak{F}^{ss}_{\Phi\text{-}\mathrm{FlPsBun}}/\mathrm{GL(Y)}]$$ (see \cite[\S 5.7]{Heinloth}), and there is a finite map $$\psi:\mathcal{M}_{G,\omega}\rightarrow \mathcal{M}_{\mathrm{SL(V)},\widetilde{\omega}},\,\widetilde{\omega}=(\{\widetilde{P}_x\}_{x\in I},\{\widetilde{{a}}_x\}_{x\in I})$$ (see Lemma \ref{lem finite map type C}).
On $\mathrm{Bun}_{\mathrm{SL}(V),\widetilde \omega}$, there exists a line bundle $\Theta_{\mathrm{Bun}_{\mathrm{SL}(V),\widetilde{\omega}}}$ (see (\ref{equation pull-back of theta bundle}))
such that its restriction $\Theta_{\mathrm{Bun}_{\mathrm{SL}(V),\widetilde{\omega}}}|_{
\mathrm{Bun}^{ss}_{G,\widetilde{\omega}}}$ descends to an ample line bundle $\Theta_{\mathcal{M}_{\mathrm{SL}(V),\widetilde{\omega}}}$ on $\mathcal{M}_{\mathrm{SL}(V),\widetilde{\omega}}$. Set $\Theta_{\mathcal{M}_{G,\omega}}:=\psi^{\ast}
\Theta_{\mathcal{M}_{\mathrm{SL}(V),\widetilde{\omega}}}$ and $\Theta_{\mathrm{Bun}_{G,\omega}}:=\bar \Phi^{\ast} \Theta_{\mathrm{Bun}_{\mathrm{SL}(V),\widetilde{\omega}}}$, where $\bar \Phi:\mathrm{Bun}_{G,\omega}\rightarrow \mathrm{Bun}_{\mathrm{SL}(V),\widetilde{\omega}}$ sends $(E,\{P_x\}_{x\in I})$ to the associated bundle $E_{\mathrm{SL}(V)}$ with induced parabolic structure (see Section \ref{section fano}).
Then, 
using a similar argument in the proof of \cite[Proposition 6.5]{KNR}, we have $$H^0(\mathrm{Bun}_{G,\omega}, \Theta^n_{\mathrm{Bun}_{G,\omega}})=
H^0(\mathrm{Bun}^{ss}_{G,\omega},\pi_G^{\ast}\Theta^n_{\mathcal{M}_{G,\omega}}=\Theta^n_{\mathrm{Bun}_{G,\omega}}|_{\mathrm{Bun}^{ss}_{G,\omega}})$$ ($n\geq 0$). Thus, the moduli space is realized as  \begin{equation}\label{equation unique extension}\mathcal{M}_{G,\omega}=\mathrm{Proj}\big(\bigoplus_{n\geq 0}H^0(\mathrm{Bun}_{G,\omega}, \Theta^n_{\mathrm{Bun}_{G,\omega}})  \big).\end{equation} Set $\mathcal{L}:=\Theta_{\mathrm{Bun}_{G,\omega}}$ on $\mathrm{Bun}_{G,\omega}$, and a similar argument to $\text{Corollary \ref{corollary compactify}}$ implies 
\begin{rmk}\label{rmk 5.7} When $g< 2$,
Corollary \ref{corollary compactify} holds for the parabolic data $\omega=(\{P_x\}_{x\in I}, \{{a}_x\}_{x\in I})$ with $\tau_x=\mathrm{Lie}({a}_x)\in \Phi^{\ast}_0$.
\end{rmk}


\bibliographystyle{plain}

\renewcommand\refname{References}

\end{document}